\def\R{\mathbb R}
\def\C{\mathbb C}
\def\NAT@def@citea{\def\@citea{\NAT@separator}}
\theoremstyle{plain}
\newtheorem{theorem}{Theorem}[section]
\newtheorem{lemma}[theorem]{Lemma}
\newtheorem{corollary}[theorem]{Corollary}
\newtheorem{proposition}[theorem]{Proposition}
\theoremstyle{definition}
\newtheorem{definition}[theorem]{Definition}
\theoremstyle{remark}
\begin{document}


\title{Location of Ritz values in the numerical range of normal matrices$^\dagger$\thanks{$^\dagger$This article has been accepted for publication in Linear and Multilinear Algebra, published by Taylor \& Francis.}}

\author{\name{Kennett L. Dela Rosa\textsuperscript{a,b}\thanks{K.~L. Dela Rosa. Email: pld43@drexel.edu} and Hugo J. Woerdeman\textsuperscript{a}\thanks{H.~J. Woerdeman. Email: hugo@math.drexel.edu}}
\affil{\textsuperscript{a} Department of Mathematics, Drexel University, Philadelphia, USA; \textsuperscript{b} Institute of Mathematics, University of the Philippines Diliman, Quezon City, Philippines}}

\maketitle

\begin{abstract}
Let $\mu_1$ be a complex number in the numerical range $W(A)$ of a normal matrix $A$. In the case when no eigenvalues of $A$ lie in the interior of $W(A)$, we identify the smallest convex region containing all possible complex numbers $\mu_2$ for which $\begin{bmatrix}\mu_1& *\\0& \mu_2\end{bmatrix}$ is a $2$-by-$2$ compression of $A$.\end{abstract}
\begin{amscode}
15A18, 15A29,
15A60, 47A12,	47A20 
\end{amscode}
\begin{keywords}
Eigenvalues; Ritz values; Normal matrices; Interlacing; Matrix compressions
\end{keywords}

\section{Introduction}
Let $k,n\in\mathbb N$ with $k\leq n$. A matrix $B\in\C^{k\times k}$ is a \textit{size}-$k$ \textit{compression} of $A\in\C^{n\times n}$ if there exists a unitary $U\in\C^{n\times n}$ such that 
\begin{equation}\label{intro1}
U^*AU=\begin{bmatrix} B& *\\ *& *\end{bmatrix}.\end{equation}
Equivalently, $B$ is a size-$k$ compression of $A$ if there exists an isometry $V\in \C^{n\times k}$ (i.e., $V^*V=I_k$) such that $B=V^*AV$. If $B$ is a size-$k$ compression of $A$ that is a scalar matrix or a normal matrix, then $B$ is called a \textit{scalar} or \textit{normal} compression of $A$, respectively. The eigenvalues of the compression $V^*AV$ are called the \textit{Ritz values from the pair} $(A,V)$, and in this case, we say that the Ritz values form a $k$-\textit{Ritz set of} $A$. 
The Ritz values of $A$ are inside the \textit{numerical range} or \textit{field of values} $W(A)$ of $A$ which is defined by
\[W(A):=\{x^*Ax:\ x\in\C^n,\ ||x||=1\}.\]
Characterizing which $k$ complex numbers $\mu_1,\ldots,\mu_k\in W(A)$ appear as Ritz values of $A$ is called the \textit{inverse field of values problem with $k$ Ritz values} (iFOV-$k$) \cite{fU08,rC09,rC11,rC13}. More precisely, the iFOV-$k$ problem for $A$ is described as follows: given $\mu_1,\ldots,\mu_k\in W(A)$, (i) determine if there is an isometry $V\in\C^{n\times k}$ such that $\sigma(V^*AV)=\{\mu_1,\ldots,\mu_k\}$, and if so, (ii) characterize such $V$. Uhlig originally proposed the iFOV-$1$ problem in \cite{fU08} which is always solvable while Carden considered the generalization to iFOV-$k$ \cite{rC11,rC13}. Ritz values have been studied in numerical analysis in the setting of Krylov subspace methods (see, e.g.,  \cite{mE03,mE09,jD12}). Several papers deal with characterizing $(n-1)$-Ritz sets (see, e.g., \cite{hG02,hG04,sM04,rC13}). The known results on $k$-Ritz sets for $k<n-1$ usually assume that the size-$k$ compression has some special properties (see, e.g., \cite{kF57,dC84,jQ09,jH12,zB13}) or the results are algebraically formulated but have not been given geometric interpretation \cite{rT66,sM04}. We begin to carry out the program of finding a geometric characterization of $k$-Ritz sets in general, and our main result on $2$-Ritz sets Theorem \ref{main} suggests a promising starting point. A paper that inspired the current research is by Carden and Hansen \cite{rC13} where they proved that if $A\in\C^{3\times 3}$ is normal whose boundary $\partial W(A)$ forms a nondegenerate triangle, then fixing $\mu_1\in W(A)\setminus\partial W(A)$ determines a unique number $\mu_2$ (called the \textit{isogonal conjugate of} $\mu_1$) so that $\{\mu_1,\mu_2\}$ forms a $2$-Ritz set for $A$ \cite[Theorem 4]{rC13}. In this paper, we consider the analogous problem for an $n$-by-$n$ normal matrix $A$, and we identify the smallest convex region containing all possible $\mu_2$'s.

Let $A\in \C^{n\times n}$ be normal and let $\mu_1\in W(A)$ be given. Generalizing the notation in Section 3 of \cite{jH12}, we define the set of all Ritz values associated to $\mu_1$ as
\[
{\cal B}_A(\mu_1):=\left\{\mu_2\in\C:\ V^*A V=\begin{bmatrix} \mu_1&*\\0& \mu_2\end{bmatrix}\ \textup{for some}\ V\in\C^{n\times 2}\ \textup{with}\ V^*V=I_2\right\}.
\] Let $K$ be a $3$-element subset of the \textit{spectrum} $\sigma(A)$ of $A$. 
If $\mu_1\notin\sigma(A)$, define \[{\cal E}_K(\mu_1):=\begin{cases} \{w_{K}\}\cup[\sigma(A)\ominus K],& \textup{if}\ \mu_1\in\textup{conv}(K)\\
												\varnothing,& \textup{otherwise},\end{cases}\]
where $w_K$ is the \textit{isogonal conjugate} of $\mu_1$ with respect to $\textup{conv}(K)$ (to be defined in Section 2) and the set difference $\sigma(A)\ominus K$ is to be taken according to multiplicity. Thus, if $\lambda\in \sigma(A)$ has multipicity $m$, and $\lambda$ appears $k$ times in $K$, then $\lambda$ appears $m-k$ times in $\sigma(A)\ominus K$. If $\Lambda_2(A)$ denotes the \textit{rank-2 numerical range of} $A$ (see Section 3 for the definition and some properties), define
\small
\begin{equation}\label{region}
{\cal R}_A(\mu_1):=\begin{cases}
\vspace{0.2cm} 
W(A),& \textup{if}\ \mu_1\in \Lambda_2(A)\\
\vspace{0.2cm} 
\textup{conv}[\sigma(A)\ominus\{\lambda_i\}],& \textup{if}\ \mu_1=\lambda_i,\ \textup{where}\ \lambda_i\notin \Lambda_2(A)\ \textup{or}\\
\vspace{0.2cm} 
& \mu_1\in [W(A)\setminus\Lambda_2(A)]\cap (\lambda_i,\lambda_{i+1}),\\
\vspace{0.2cm} 
& \textup{where}\ [\lambda_i,\lambda_{i+1}]\cap \partial W(A)=[\lambda_i,\lambda_{i+1}]\\
\vspace{0.2cm} 
& \textup{and}\ \lambda_i\notin\Lambda_2(A),\ \lambda_{i+1}\in \Lambda_2(A)\\
\vspace{0.2cm} 
\textup{conv}[\sigma(A)\ominus\{\lambda_i,\lambda_{i+1}\}],& \textup{if}\ \mu_1\in [W(A)\setminus\Lambda_2(A)]\cap (\lambda_i,\lambda_{i+1}),\\
\vspace{0.2cm} 
& \textup{where}\ [\lambda_i,\lambda_{i+1}]\cap \partial W(A)=[\lambda_i,\lambda_{i+1}]\\
\vspace{0.2cm} 
&\textup{and}\ \lambda_i,\lambda_{i+1}\notin\Lambda_2(A)\\
\vspace{0.2cm} 

\textup{conv}\left[\displaystyle\bigcup_{K\subseteq \sigma(A), |K|=3}{\cal E}_K(\mu_1)\right],&
 \textup{if}\ \mu_1\in [W(A)\setminus(\Lambda_2(A)\cup\partial W(A))]. 
\end{cases}
\end{equation}
The \textit{generating points of} ${\cal R}_A(\mu_1)$ are either the extreme points of ${\cal R}_A(\mu_1)$ if $\mu_1$ satisfies the first three cases above or the elements of $\displaystyle\bigcup_{K\subseteq \sigma(A), |K|=3}{\cal E}_K(\mu_1)$ otherwise. Observe that all the generating points of ${\cal R}_A(\mu_1)$ are elements of ${\cal B}_A(\mu_1)$ (see \cite[Proposition 6]{jH12} and \cite[Theorem 4]{rC13}).
\normalsize

For a normal $A\in \C^{4\times 4}$ with a $4$-gon for its $\partial W(A)$, Carden and Hansen noted without proof in \cite[Figure 6]{rC13} that once an interior point $\mu_1\in W(A)$ is fixed, then $\textup{conv}[{\cal B}_A(\mu_1)]={\cal R}_A(\mu_1)=\textup{conv}\{w_{123},w_{124}, \lambda_3,\lambda_4\}$. (See Figure 1.)
\begin{figure}[h]
\centering
\includegraphics[height=7.5cm, width=10.5cm]{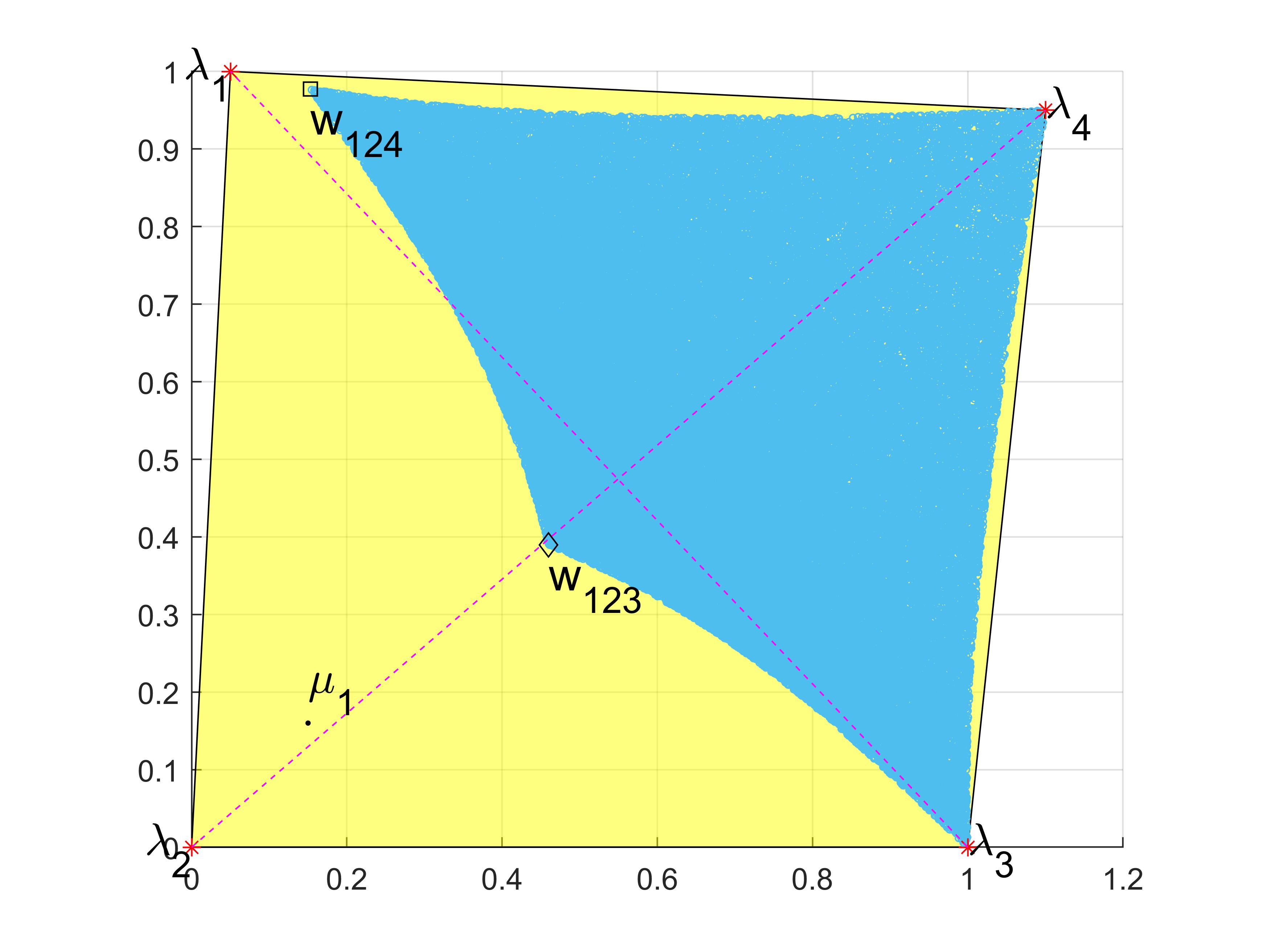}
\caption{A version of Figure 6 in \cite{rC13}. The blue region is the set ${\cal B}_A(\mu_1)$.}
\end{figure}

The goal of the paper is to confirm Carden and Hansen's observation in general, that is, $\textup{conv}[{\cal B}_A(\mu_1)]={\cal R}_A(\mu_1)$. In our main result, we exclude the case when the eigenvalues of $A$ all lie on the same line, as in this case ${\cal B}_A(\mu_1)$ is well understood due to Cauchy interlacing.

\begin{theorem}\label{main}
Let $A\in\C^{n\times n}$ be normal with eigenvalues $\lambda_1,\ldots,\lambda_n$ not lying on the same line and arranged in a counterclockwise orientation with respect to $\textup{trace}(A)/n$ such that no eigenvalue is in the interior of $W(A)$. If $\mu_1\in W(A)$, then
\[\textup{conv}[{\cal B}_A(\mu_1)]= {\cal R}_A(\mu_1)\]
where ${\cal R}_A(\mu_1)$ is as defined in \eqref{region}. 
\end{theorem}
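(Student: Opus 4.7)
The plan is to prove $\textup{conv}[{\cal B}_A(\mu_1)] = {\cal R}_A(\mu_1)$ by establishing the two inclusions separately, with a case analysis following the five cases defining ${\cal R}_A(\mu_1)$ in \eqref{region}.

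The inclusion $\supseteq$ follows from the observation noted immediately after \eqref{region}: every generating point of ${\cal R}_A(\mu_1)$ lies in ${\cal B}_A(\mu_1)$. Concretely, each isogonal conjugate $w_K$ is realized by applying Carden and Hansen's Theorem~4 to the $3\times 3$ normal compression of $A$ on the span of eigenvectors indexed by $K$, and each $\lambda \in \sigma(A)\ominus K$ is realized by taking $v_1$ inside that 3-dimensional span with $v_1^*Av_1 = \mu_1$ and $v_2$ a unit eigenvector for $\lambda$; the required orthogonality conditions $v_2 \perp v_1$ and $v_2^*Av_1 = 0$ hold automatically since $Av_2 = \lambda v_2$. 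Monotonicity of the convex hull then completes this direction.

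The substantive direction is $\subseteq$. When $\mu_1 \in \Lambda_2(A)$ it is vacuous since $\mu_2 \in W(A)$ always. When $\mu_1$ lies on $\partial W(A)$, I would exploit a standard normality fact: any unit vector $v_1$ with $v_1^*Av_1 = \mu_1$ must lie in the span of the eigenvectors corresponding to the minimal face of $\partial W(A)$ containing $\mu_1$; the orthogonality constraints $v_2 \perp v_1$ and $v_2^*Av_1 = 0$ then confine $v_2$ to the span of the remaining eigenvectors, so that $\mu_2 = v_2^*Av_2$ lies in the prescribed convex hull of eigenvalues.

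The principal obstacle is the interior case $\mu_1 \in W(A)\setminus(\Lambda_2(A) \cup \partial W(A))$, where the target region is a convex hull over a family of isogonal conjugates and eigenvalues. My plan is a supporting-hyperplane argument: for each unit $\alpha \in \C$, show that $\sup\{\textup{Re}(\alpha\mu_2) : \mu_2 \in {\cal B}_A(\mu_1)\}$ is attained by an extremal pair $(v_1, v_2)$ whose combined support lies in some 3-element subset $K\subseteq \sigma(A)$ (up to adjoining a single extra eigenvector carrying $v_2$), thereby reducing the optimization to either the Carden--Hansen 3D case on $K$ (producing $w_K$) or to a direct eigenvector configuration (producing some $\lambda \in \sigma(A)\ominus K$). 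The reduction of support would use Carath\'eodory's theorem applied to the weight distribution $(|c_j|^2)$ defining $\mu_1 = \sum_j \lambda_j |c_j|^2$, combined with first-order optimality conditions for $v_2$ on the sphere of $\textup{span}\{v_1, Av_1\}^\perp$. Balancing these coupled constraints so that the relevant parts of both $v_1$ and $v_2$ land in the same 3-dimensional invariant subspace is the main technical challenge; resolving it will likely require auxiliary lemmas from Sections~2 and~3 on the geometry of isogonal conjugates and the rank-2 numerical range, together with care near degenerate configurations where $v_1$ is itself an eigenvector.
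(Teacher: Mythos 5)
Your high-level plan --- establish $\supseteq$ via the generating points, establish $\subseteq$ via a supporting-hyperplane argument --- is the same overall shape as the paper's, but the core technical step of the hard direction is left as a hope rather than a proof, and that hope is not how the paper actually proceeds.

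Two specific gaps. First, in your boundary case you claim the constraints $v_2\perp v_1$ and $v_2^*Av_1 = 0$ "confine $v_2$ to the span of the remaining eigenvectors." This fails whenever the face of $\partial W(A)$ containing $\mu_1$ carries more than two eigenvalues: if $v_1$ has support on $k\ge 3$ collinear eigenvalues, then $\{v_1,Av_1\}^\perp$ intersects that face's eigenspace in a $(k-2)$-dimensional subspace, so $v_2$ need not avoid it. The conclusion is still true, but only by invoking Cauchy interlacing inside the face and then Proposition~\ref{withzero} to combine with the off-face eigenvalues, which is how Proposition~\ref{twocases} handles it.

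Second, and more seriously, for the interior case your plan to reduce an extremizing pair $(v_1,v_2)$ to combined support of size 3 (plus one) via Carath\'eodory and first-order conditions is not substantiated and does not match the structure of the problem. Carath\'eodory controls the extreme points of $\mathcal C_\Lambda(\mu_1)$ --- so the possible $v_1 = \sqrt{t}$ of small support --- but the quantity $\sup_{v_2}\textup{Re}(\alpha\,\mu_2)$ is not an affine (or even convex) function of $t$, so the supremum over $t$ need not be attained at an extreme point of $\mathcal C_\Lambda(\mu_1)$; and even for fixed $t$, the optimal $v_2$ in the $(n-2)$-dimensional space $\{\sqrt t,\Lambda\sqrt t\}^\perp$ has no reason to have small support. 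The paper sidesteps this entirely: after a normalization (assumptions (A1)--(A4), resp.\ (B1)--(B4)) so that the relevant supporting line becomes $\textup{Re}(z)=0$, it shows $\textup{Re}(v^*\Lambda v)\ge 0$ for \emph{all} $v\in\{\sqrt t,\Lambda\sqrt t\}^\perp$ by proving positive semidefiniteness of the full $(n-2)\times(n-2)$ matrix $Z(t)$ in \eqref{psd}. For full-support $t$ this is the content of Propositions~\ref{realpartmup1} and~\ref{realpartmup2} (via the $\epsilon$-perturbation, trace, and determinant computations, supported by Lemmas~\ref{oneone}--\ref{xt}); the reduction to fewer eigenvalues happens only for $t$ with zero entries, by induction and Proposition~\ref{withzero}. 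Without the $Z(t)\ge 0$ inequality --- or some substitute for it --- your argument has no way to rule out $\mathcal B_A(\mu_1,t)$ leaking across the line $[\lambda_3,w_{123}]$ or $[w_{123},w_{12n}]$ for an interior $t$, so the $\subseteq$ inclusion in Cases 3 and 4 is not established.
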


Let $\mu_1=\displaystyle\sum_{j=1}^nt_j\lambda_j$ where $t_j>0$ and $\displaystyle\sum_{j=1}^nt_j=1$. Let $w=[\lambda_3\ \cdots\ \lambda_n]^T$ and $\mathbbm 1=[1\ \cdots\ 1]^T\in \C^{n-2}$. An important technique in our proof is that under appropriate conditions, Theorem \ref{main} follows from showing that the matrix $Z(t)$ defined by
\footnotesize
\begin{equation}\label{psd}
\dfrac{\textup{Re}(\lambda_1)}{t_1}(w-\lambda_2 \mathbbm 1)(w-\lambda_2 \mathbbm 1)^*+\dfrac{\textup{Re}(\lambda_2)}{t_2}(\lambda_1\mathbbm 1-w)(\lambda_1\mathbbm 1-w)^*+|\lambda_2-\lambda_1|^2\textup{diag}\left(\dfrac{\textup{Re}(\lambda_3)}{t_3},\ldots,\dfrac{\textup{Re}(\lambda_n)}{t_n}\right)\end{equation}
\normalsize
is positive semidefinite.

\section{Preliminaries}
We begin this section by defining the \textit{isogonal conjugate of a point} with respect to some triangular region.
\begin{definition}
Let $\lambda_a,\lambda_b,\lambda_c\in\C$ be successive corners of a nondegenerate triangular region $\textup{conv}\{\lambda_a,\lambda_b,\lambda_c\}$ having counterclockwise orientation with respect to $\sum_j\lambda_j/3$. Given a point $\mu\in\textup{conv}\{\lambda_a,\lambda_b,\lambda_c\}\setminus\{\lambda_a,\lambda_b,\lambda_c\}$, let $\ell_j$ be the line through $[\lambda_j,\mu]$ and ${\cal L}_j$ be the reflection of $\ell_j$ about the angle bisector of the vertex angle at $\lambda_j$. The \textit{isogonal conjugate} $w_{abc}$ of $\mu$ is the intersection of ${\cal L}_j$ for all $j=a,b,c$.
\end{definition}

Let $X\in \mathbb C^{m\times n}$ be given and let $\alpha\subseteq \{1,\ldots,m\}$ and $\beta\subseteq\{1,\ldots,n\}$ be arbitrary index sets of cardinality $0<r\leq \min\{m,n\}$. The $r^{th}$ \textit{compound matrix of} $X$, denoted by $C_r(X)$, is the ${{m}\choose{r}}\times  {{n}\choose{r}}$ matrix whose $(\alpha,\beta)$-entry is the determinant $\det (X[\alpha,\beta])$ of the submatrix of $X$ whose rows and columns are indexed by $\alpha $ and $\beta$, respectively, and the indexing is lexicographic. By Cauchy-Binet formula, the $r^{th}$ compound matrix satisfies \cite[Equation 0.8.1.1]{rH13} \[C_r(XY)=C_r(X)C_r(Y).\]

Let $A\in \C^{n\times n}$ be normal with eigenvalues $\lambda_1,\ldots,\lambda_n$. If $B$ is a size-$k$ compression of $A$, then there exists an isometry $U\in\C^{n\times k}$ such that
\[\left(C_j(U)\right)^*C_j(A-z I_n)\left(C_j(U)\right)=C_j(B-z I_k)\]
for all $z\in\C$ and for each $j=1,\ldots,k$ (see \cite{rT66,sM04}). This follows from the multiplicative property of the compound matrix. In particular, when $j=k=2$ and the spectrum $\sigma(B)=\{\mu_1,\mu_2\}$, then
\begin{equation}\label{nec2}(\mu_1-z)(\mu_2-z)\in \textup{conv}\{(\lambda_i-z)(\lambda_j-z):\ i<j\}\end{equation}
for all $z\in\C$. 

Numerical experiments reveal that \eqref{nec2} is not sufficient. We extend the ideas in \cite{jH12} to gain a better understanding of ${\cal B}_A(\mu_1)$. Schur's Theorem \cite[Theorem 2.3.1]{rH13} guarantees that $A$ is unitarily similar to $\Lambda:=\textup{diag}(\lambda_1,\ldots,\lambda_n)$. Observe that ${\cal B}_{A}(\mu_1)={\cal B}_{\Lambda}(\mu_1)$. Define the collection of all convex weights of $\mu_1$ as
\[
{\cal C}_\Lambda(\mu_1):=\left\{t=[t_j]\in\R^n:\ \displaystyle\sum_{j=1}^nt_j\lambda_j=\mu_1, \displaystyle\sum_{j=1}^nt_j=1,\ t_j\geq 0\right\}.
\]

If $\mu_1=\langle\Lambda u,u\rangle$ for some unit vector $u\in\C^n$, then without loss of generality, we can assume $u$ has nonnegative entries since $\Lambda$ is diagonal. The set of all possible Ritz vector $u\in\C^n$ of $\mu_1$ is given by the set $\{\sqrt{t}:\ t\in {\cal C}_\Lambda(\mu_1)\}$
where $\sqrt{t}$ is componentwise square root. For each $t\in {\cal C}_\Lambda(\mu_1)$, there exists $F_t\in \C^{n\times(n-m)}$ with $F_t^*F_t=I_{n-m}$ whose range space is $\{\sqrt{t},\Lambda\sqrt{t}\}^\perp$. Since $t\neq 0$, the dimension $m$ of $\textup{Span}(\{\sqrt{t},\Lambda\sqrt{t}\})$ satisfies $0<m\leq 2$. If $m=1$, then $\mu_1\in \sigma(A)$. 

Define
\[
{\cal B}_\Lambda(\mu_1,t):=W(F_t^*\Lambda F_t).
\]
In the normal compression case, the next result reduces to \cite[Proposition 10]{jH12}.

\begin{proposition}\label{givenzprop1}
Let $A\in \C^{n\times n}$ be normal, and let $\mu_1\in W(A)$ be given. Then
\[{\cal B}_A(\mu_1)=\displaystyle\bigcup_{t\in{\cal C}_\Lambda(\mu_1)}{\cal B}_\Lambda(\mu_1,t).\]
\end{proposition}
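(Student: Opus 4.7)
The plan is to establish both inclusions by an explicit parameterization of the isometries $V=[v_1,v_2]\in\C^{n\times 2}$ satisfying $V^*V=I_2$ and $V^*\Lambda V=\begin{bmatrix}\mu_1 & * \\ 0 & \mu_2\end{bmatrix}$, using that $\Lambda$ is diagonal to normalize the first column $v_1$. Since $A$ is unitarily similar to $\Lambda$, one has $\mathcal{B}_A(\mu_1)=\mathcal{B}_\Lambda(\mu_1)$, so it suffices to prove the identity with $\Lambda$ in place of $A$.

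For the inclusion $\supseteq$, I would fix $t\in\mathcal{C}_\Lambda(\mu_1)$ and $\mu_2=y^*F_t^*\Lambda F_ty\in W(F_t^*\Lambda F_t)$ for some unit vector $y$, and build the isometry $V=[\sqrt{t},\,F_ty]$. The identities $\|v_j\|=1$, $v_1^*\Lambda v_1=\mu_1$, and $v_2^*\Lambda v_2=\mu_2$ are immediate from the definitions of $\mathcal{C}_\Lambda(\mu_1)$ and of $F_t$. The remaining orthogonality conditions $v_1^*v_2=0$ and $v_2^*\Lambda v_1=0$ follow because the range of $F_t$ is orthogonal to both $\sqrt{t}$ and $\Lambda\sqrt{t}$ by construction.

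For the reverse inclusion $\subseteq$, given an isometry $V=[v_1,v_2]$ realizing $\mu_2$, the key step is to reduce to the case where $v_1$ is real and nonnegative. Writing $v_{1,j}=r_je^{i\phi_j}$ with $r_j\geq 0$, and setting $D=\mathrm{diag}(e^{i\phi_j})$, I would use that $\Lambda$ commutes with the diagonal unitary $D$, so replacing $V$ by $D^*V$ preserves $V^*\Lambda V$ and makes the first column equal to $\sqrt{t}$ with $t_j:=r_j^2$. The conditions $\|v_1\|=1$ and $v_1^*\Lambda v_1=\mu_1$ then translate exactly to $t\in\mathcal{C}_\Lambda(\mu_1)$, while the vanishing of $v_1^*v_2$ and $v_2^*\Lambda v_1$ forces $v_2\in\{\sqrt{t},\Lambda\sqrt{t}\}^\perp=\mathrm{range}(F_t)$. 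Writing $v_2=F_ty$ for a unit $y$ yields $\mu_2=y^*F_t^*\Lambda F_ty\in W(F_t^*\Lambda F_t)=\mathcal{B}_\Lambda(\mu_1,t)$.

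The one delicate point is the phase-normalization in $\subseteq$, which crucially exploits that $\Lambda$ is diagonal so that diagonal unitaries commute with it; without this step the parameterization by $t\in\mathcal{C}_\Lambda(\mu_1)$ would not be available. The edge case $\mu_1\in\sigma(A)$ (where $m=1$) causes no real obstruction: there $\Lambda\sqrt{t}$ is a scalar multiple of $\sqrt{t}$, so the condition $v_2\perp\Lambda\sqrt{t}$ is automatic from $v_2\perp\sqrt{t}$, and $F_t$ has $n-1$ columns spanning $\{\sqrt{t}\}^\perp$; the argument above applies verbatim.
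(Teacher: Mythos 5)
Your proof is correct and follows exactly the framework the paper sets up in the paragraph preceding the proposition (normalizing the Ritz vector to $\sqrt{t}$ via a diagonal unitary commuting with $\Lambda$, and using the isometry $F_t$ onto $\{\sqrt{t},\Lambda\sqrt{t}\}^\perp$); the paper itself does not write out this verification and instead attributes the normal-compression case to [jH12, Proposition 10]. Your handling of the $m=1$ degenerate case is also consistent with the paper's convention that $F_t$ then has $n-1$ columns.
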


Under the assumption that no three eigenvalues lie on a line,
\cite[Proposition 11]{jH12} guarantees that
\[\textup{Ext}[{\cal C}_\Lambda(\mu_1)]=\{t\in {\cal C}_\Lambda(\mu_1):\ t\ \textup{has at most 3 positive entries}\}.\] If some but not all eigenvalues lie on a line, then a similar proof to \cite[Proposition 11]{jH12} implies the containment
\begin{equation}\label{ext}
\textup{Ext}[{\cal C}_\Lambda(\mu_1)]\subseteq\{t\in {\cal C}_\Lambda(\mu_1):\ t\ \textup{has at most 3 positive entries}\}.
\end{equation}
The proof in \cite{jH12} of the reverse inclusion of \eqref{ext} hinges on the \textit{uniqueness} of the weights when writing $\mu_1$ as a convex combination of eigenvalues $\lambda_i,\lambda_j,\lambda_k$ which are assumed to be not on the same line in \cite{jH12}. The convex weights of $\mu_1$ are non-unique when the eigenvalues $\lambda_i,\lambda_j,\lambda_k$ lie on a line. However, if $\mu_1\notin[\lambda_i,\lambda_j]$ for all $i,j$, then equality in \eqref{ext} is attained and the extreme points can be taken to be those elements of ${\cal C}_\Lambda(\mu_1)$ with exactly three positive entries.

\begin{proposition}\label{givenzprop2}
Let $A\in\C^{n\times n}$ be normal with eigenvalues $\lambda_1,\ldots,\lambda_n$, and let $\mu_1\in W(A)$ be given. The set ${\cal C}_\Lambda(\mu_1)$ is compact and convex.
Moreover, if the eigenvalues $\lambda_1,\ldots,\lambda_n$ are not on lying on the same line, then \[\textup{Ext}[{\cal C}_\Lambda(\mu_1)]\subseteq\{t\in {\cal C}_\Lambda(\mu_1):\ t\ \textup{has at most 3 positive entries}\}.\]
In particular, if $\mu_1\notin[\lambda_i,\lambda_j]$ for all $i,j$, then \[{\cal C}_\Lambda(\mu_1)=\textup{conv}\{t\in{\cal C}_\Lambda(\mu_1):\ t\ has\ exactly\ 3\ positive\ entries\}.\]
\end{proposition}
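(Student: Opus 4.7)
The plan is to treat the three claims in the proposition separately. For compactness and convexity, I would simply observe that ${\cal C}_\Lambda(\mu_1)$ is the intersection of the standard simplex $\{t\in\R^n:t_j\ge 0,\ \sum_j t_j=1\}$ with the real affine subspace cut out by $\sum_j t_j\lambda_j=\mu_1$ (viewed as two real equations via real and imaginary parts). Each set is closed and convex, and the simplex is bounded, so the intersection is compact and convex.

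For the extreme point bound, I would use a perturbation argument in the spirit of \cite[Proposition 11]{jH12}. Suppose $t\in\textup{Ext}[{\cal C}_\Lambda(\mu_1)]$ has support $I=\{j:t_j>0\}$ of cardinality $k\ge 4$. Consider the linear system in real unknowns $s=(s_j)_{j\in I}$:
\[
\sum_{j\in I} s_j=0,\qquad \sum_{j\in I} s_j\,\textup{Re}(\lambda_j)=0,\qquad \sum_{j\in I} s_j\,\textup{Im}(\lambda_j)=0.
\]
This is 3 real equations in $k\ge 4$ real unknowns, so a nontrivial real solution $s$ exists. Extending $s$ by zero outside $I$, strict positivity of $t_j$ on $I$ guarantees $t\pm\varepsilon s\in{\cal C}_\Lambda(\mu_1)$ for all sufficiently small $\varepsilon>0$. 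Then $t=\tfrac12(t+\varepsilon s)+\tfrac12(t-\varepsilon s)$ contradicts extremality, giving the desired containment.

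For the explicit representation under the additional hypothesis $\mu_1\notin[\lambda_i,\lambda_j]$ for all $i,j$, I would rule out supports of size $1$ or $2$ for any $t\in{\cal C}_\Lambda(\mu_1)$: a single positive entry would force $\mu_1=\lambda_j\in[\lambda_j,\lambda_j]$, while two positive entries would force $\mu_1\in[\lambda_i,\lambda_j]$; both possibilities are excluded. Combined with the previous step, every extreme point has support of cardinality exactly $3$. Since ${\cal C}_\Lambda(\mu_1)$ is compact and convex in $\R^n$, it equals the convex hull of its extreme points, which are contained in $\{t\in{\cal C}_\Lambda(\mu_1):t\text{ has exactly 3 positive entries}\}$; the reverse inclusion is immediate, yielding the stated equality.

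The only step that requires real care is the perturbation argument: one must verify that the three real equations above genuinely encode the affine conditions defining ${\cal C}_\Lambda(\mu_1)$ and that strict positivity of $t$ on its support leaves enough slack for both $t+\varepsilon s$ and $t-\varepsilon s$ to remain in the nonnegative orthant. Notably, the dimension count does not invoke the "not all on the same line" hypothesis—the bound $|I|\le 3$ on extreme point supports holds for any normal $A$—but including the hypothesis keeps the proposition consistent with the setting of Theorem~\ref{main}, where the fully collinear case is governed instead by Cauchy interlacing and yields an even stronger bound of at most $2$ positive entries.
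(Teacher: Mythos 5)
Your proof is correct and takes essentially the same route that the paper defers to via \cite[Proposition 11]{jH12}: compactness and convexity from intersecting the simplex with an affine slice, a dimension-count/perturbation argument to bound extreme-point supports by three, and Krein--Milman together with exclusion of supports of size one or two for the final equality. Your added observation that the support bound $|I|\le 3$ does not actually need the non-collinearity hypothesis is also accurate.
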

The next result gives a formula for the isogonal conjugate of a point with respect to some triangular region.
\begin{proposition}\label{isogformula1}
Let $z_1,z_2,z_3\in\C$ be successive corners of a nondegenerate triangular region $ \textup{conv}\{z_1,z_2,z_3\}$ having counterclockwise orientation with respect to $\sum_{j=1}^3z_j/3$. Given $\mu=r_1z_1+r_2z_2+r_3z_3$ where $r_i>0$ and $\displaystyle\sum_{i=1}^3r_i=1$, let $f:=\dfrac{\overline{z_3-z_2}}{\sqrt{r_1}}e_1+\dfrac{\overline{z_1-z_3}}{\sqrt{r_2}}e_2+\dfrac{\overline{z_2-z_1}}{\sqrt{r_3}}e_3$ where $e_j$ is the $j^{th}$ standard basis vector in $\C^3$. Then the isogonal conjugate $w_{123}$ of $\mu$ is given by $w_{123}=\dfrac{f^*\Lambda f}{f^*f}$.
\end{proposition}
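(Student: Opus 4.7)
The plan is to expand the Rayleigh-quotient expression $f^*\Lambda f/(f^*f)$ as an explicit convex combination of $z_1, z_2, z_3$ and then to match the resulting weights against the classical barycentric description of the isogonal conjugate.

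First, since $\Lambda = \textup{diag}(z_1, z_2, z_3)$ is diagonal, I would compute
\[
\frac{f^*\Lambda f}{f^*f} \;=\; \frac{|f_1|^2 z_1 + |f_2|^2 z_2 + |f_3|^2 z_3}{|f_1|^2+|f_2|^2+|f_3|^2},
\]
and read off from the definition of $f$ the explicit weights
\[
|f_1|^2 = \frac{a^2}{r_1}, \qquad |f_2|^2 = \frac{b^2}{r_2}, \qquad |f_3|^2 = \frac{c^2}{r_3},
\]
where $a := |z_3-z_2|$, $b := |z_1-z_3|$, $c := |z_2-z_1|$ are the side lengths opposite $z_1$, $z_2$, $z_3$. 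This identifies $f^*\Lambda f/(f^*f)$ as the unique point of $\textup{conv}\{z_1,z_2,z_3\}$ whose normalised barycentric coordinates are proportional to $(a^2/r_1 : b^2/r_2 : c^2/r_3)$.

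It then suffices to prove the classical triangle-geometry fact that the isogonal conjugate of a point with barycentric coordinates $(r_1 : r_2 : r_3)$ has barycentric coordinates $(a^2/r_1 : b^2/r_2 : c^2/r_3)$. I would verify this one vertex at a time. At $z_1$, the cevian $\ell_1$ meets the opposite side at $P_1 = (r_2 z_2 + r_3 z_3)/(r_2+r_3)$, which divides $[z_2, z_3]$ in ratio $r_3 : r_2$ measured from $z_2$. Applying the law of sines in the sub-triangles cut off by $\ell_1$ and by its reflection $\mathcal L_1$ yields Steiner's ratio formula for isogonal cevians, from which $\mathcal L_1$ meets $[z_2, z_3]$ at $Q_1 = (b^2 r_3 z_2 + c^2 r_2 z_3)/(b^2 r_3 + c^2 r_2)$. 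A short algebraic check then shows that the point with barycentric coordinates $(a^2/r_1 : b^2/r_2 : c^2/r_3)$ lies on the segment $[z_1, Q_1] \subset \mathcal L_1$; the symmetric computations at $z_2$ and $z_3$ (or a single application of the converse of Ceva's theorem to the three reflected cevians) identify this common intersection as $w_{123}$.

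The hardest part will be the bookkeeping in Steiner's ratio formula, in particular tracking which vertex angle gets reflected and matching $a, b, c$ to the correctly opposite vertices under the counterclockwise labelling; once this is handled at one vertex, the remaining calculations are symmetric, and the identification $w_{123} = f^*\Lambda f/(f^*f)$ then follows at once by comparing barycentric coordinates.
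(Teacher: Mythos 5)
Your proposal is correct but takes a genuinely different route from the paper. The paper's proof is very short: it observes that $f\in\{\sqrt{r},\Lambda\sqrt{r}\}^\perp$ (where $\Lambda=\mathrm{diag}(z_1,z_2,z_3)$ and $r=[r_1\ r_2\ r_3]^T$), notes that ${\cal C}_\Lambda(\mu)=\{r\}$ by uniqueness of the convex weights of a point interior to a nondegenerate triangle, then invokes Proposition~\ref{givenzprop1} to conclude ${\cal B}_\Lambda(\mu)=\{f^*\Lambda f/f^*f\}$, and finally appeals to Carden--Hansen's Theorem 4, which already establishes that this unique companion Ritz value \emph{is} the isogonal conjugate. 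Your proof bypasses the Ritz/compression machinery entirely: you expand the Rayleigh quotient as a barycentric combination with weights proportional to $(a^2/r_1:b^2/r_2:c^2/r_3)$ and then re-derive, from scratch, the classical fact that isogonal conjugation in barycentric coordinates sends $(r_1:r_2:r_3)\mapsto(a^2/r_1:b^2/r_2:c^2/r_3)$, via Steiner's ratio formula and (the converse of) Ceva's theorem. Your route is more self-contained and makes the geometric content of the formula explicit, at the price of reproving a piece of classical triangle geometry that the paper avoids by leaning on [rC13, Theorem 4] — a natural choice for the authors since that result is the starting point of the whole paper. Both are valid; be sure, if you write out the details, to verify that $a=|z_3-z_2|$ really is the side opposite $z_1$ under the counterclockwise labelling (you flag this yourself), since a mismatch in the $a,b,c$ assignment would silently produce the wrong cevian.
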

\begin{proof}
Let $A:=\textup{diag}(z_1,z_2,z_3)$ and $r:=[r_1\ r_2\ r_3]^T$. Then a calculation reveals that $f\in \{\sqrt{r},A\sqrt{r}\}^\perp.$
Observe that ${\cal C}_A(\mu)=\{r\}$ due to the uniqueness of writing $\mu$ as a convex combination of the $z_k$'s. By Proposition \ref{givenzprop1},
${\cal B}_A(\mu)={\cal B}_A(\mu,r)=\{w\}$
where $w=\dfrac{f^*Af}{f^*f}$. By \cite[Theorem 4]{rC13}, $w$ is the isogonal conjugate of $\mu$.
\end{proof}

\begin{proposition}\label{withzero}
Let $A\in \C^{n\times n}$ be normal with eigenvalues $\lambda_1,\ldots,\lambda_n$ and $\mu_1\in W(A)$. Let $t\in {\cal C}_\Lambda(\mu_1)$, $J:=\{j: t_j=0\}\neq\varnothing$, and $S:=\{\lambda_j:j\in J\}$. Then \[{\cal B}_\Lambda(\mu_1,t)=\textup{conv}[{\cal B}_{\Lambda'}(\mu_1,s)\cup W(S)]\]
where $\Lambda'=\textup{diag}(\lambda_j)_{j\notin J}$ and $s=[t_j]_{j\notin J}$.
\end{proposition}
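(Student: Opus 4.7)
The plan is to exploit the fact that when $t_j=0$ for all $j\in J$, both $\sqrt{t}$ and $\Lambda\sqrt{t}$ vanish on the coordinates indexed by $J$, so the orthogonal complement $\{\sqrt{t},\Lambda\sqrt{t}\}^\perp$ decomposes as a direct sum with $\textup{span}\{e_j:j\in J\}$ as one summand. This decomposition block-diagonalizes $F_t^*\Lambda F_t$, after which the standard identity $W(B_1\oplus B_2)=\textup{conv}[W(B_1)\cup W(B_2)]$ delivers the conclusion.

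Concretely, I would first permute the coordinates so that $J$ corresponds to the last $|J|$ positions, giving
\[\sqrt{t}=\begin{bmatrix}\sqrt{s}\\ 0\end{bmatrix},\qquad \Lambda=\begin{bmatrix}\Lambda' & 0\\ 0 & \Lambda_J\end{bmatrix},\]
where $\Lambda_J=\textup{diag}(\lambda_j)_{j\in J}$. Then $\Lambda\sqrt{t}=\begin{bmatrix}\Lambda'\sqrt{s}\\ 0\end{bmatrix}$, so
\[\{\sqrt{t},\Lambda\sqrt{t}\}^\perp=\{\sqrt{s},\Lambda'\sqrt{s}\}^\perp\oplus\C^{|J|}.\]
Because ${\cal B}_\Lambda(\mu_1,t)=W(F_t^*\Lambda F_t)$ depends only on the range of $F_t$ (any two isometries with the same range differ by a right unitary factor, which leaves the numerical range invariant), I would take
\[F_t=\begin{bmatrix}F_s & 0\\ 0 & I_{|J|}\end{bmatrix},\]
where $F_s$ is an isometry with range $\{\sqrt{s},\Lambda'\sqrt{s}\}^\perp$. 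A brief dimension check distinguishes the cases $\mu_1\notin\sigma(A)$ (where $\sqrt{s}$ and $\Lambda'\sqrt{s}$ remain linearly independent) and $\mu_1\in\sigma(A)$ (where $\Lambda'\sqrt{s}=\mu_1\sqrt{s}$); in both subcases $F_s$ has the correct dimensions so that ${\cal B}_{\Lambda'}(\mu_1,s)=W(F_s^*\Lambda' F_s)$ is well-defined, and the column counts for $F_t$ and the block version match.

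This yields $F_t^*\Lambda F_t=\begin{bmatrix}F_s^*\Lambda' F_s & 0\\ 0 & \Lambda_J\end{bmatrix}$, and the block-diagonal numerical range identity together with normality of $\Lambda_J$ (which gives $W(\Lambda_J)=\textup{conv}(S)=W(S)$) produces
\[{\cal B}_\Lambda(\mu_1,t)=\textup{conv}\bigl[W(F_s^*\Lambda' F_s)\cup W(\Lambda_J)\bigr]=\textup{conv}\bigl[{\cal B}_{\Lambda'}(\mu_1,s)\cup W(S)\bigr].\]
The main obstacle is purely bookkeeping: verifying the orthogonal-complement decomposition, justifying that $F_t$ may be chosen block-diagonally without loss, and handling the two sub-cases for $m$ uniformly. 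No new idea seems required beyond these observations.
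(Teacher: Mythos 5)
Your argument is correct and follows essentially the same route as the paper's proof: both rely on the decomposition $\{\sqrt{t},\Lambda\sqrt{t}\}^\perp=\{\sqrt{s},\Lambda'\sqrt{s}\}^\perp\oplus\operatorname{span}\{e_j:j\in J\}$ (after a coordinate permutation), choose a block-diagonal isometry onto this complement, observe that $F_t^*\Lambda F_t$ then decouples as $F_s^*\Lambda'F_s\oplus\Lambda_J$, and finish with $W(B_1\oplus B_2)=\textup{conv}[W(B_1)\cup W(B_2)]$. The only cosmetic differences are that the paper places $J$ in the leading coordinates and extends $\{e_j\}_{j\in J}$ to an orthonormal basis of the complement directly, whereas you state the direct-sum decomposition explicitly and invoke invariance of $W(F_t^*\Lambda F_t)$ under right-multiplication of $F_t$ by a unitary; these are equivalent justifications for picking the block-diagonal $F_t$.
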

\begin{proof}
Assume $\mu_1\in W(A)\setminus \sigma(A)$. The proof for $\mu_1\in \sigma(A)$ is analagous. Observe that $e_j\in \{\sqrt{t},\Lambda \sqrt{t}\}^\perp$ for all $j\in J$, where $e_j$ is the $j^{th}$ standard basis vector in $\C^n$. By applying a permutation similarity, we can assume $J=\{1,\ldots,m\}$. Write $t=[\overbrace{0\ \cdots\ 0}^m\ s^T]^T$ for some $s\in {\cal C}_{\Lambda'}(\mu_1)$ where $\Lambda'=\textup{diag}(\lambda_{m+1},\ldots,\lambda_n)$. Note that $\{ e_1, \cdots, e_m\}\subseteq \{\sqrt{t},\Lambda\sqrt{t}\}^\perp$ which can be extended to an orthonormal basis $\{f_1,\ldots,f_{n-2-m}, e_1, \cdots, e_m\}$ of $\{\sqrt{t},\Lambda\sqrt{t}\}^\perp$. Set 
\[F_t:=[f_1\ \cdots\ f_{n-2-m}\ e_1\ \cdots\ e_m]\in \C^{n\times(n-2)}.\]
Observe that $f_j=[\overbrace{0\ \cdots\ 0}^m\ g_j^T]^T$ and $g_j\in \{\sqrt{s}, \Lambda'\sqrt{s}\}^\perp$. Moreover, $G_s:=[g_1\ \cdots\ g_{n-2-m}]\in\C^{n\times (n-2-m)}$ satisfies $G_s^*G_s=I_{n-2-m}$ and 
\[F_t^*\Lambda F_t=G_s^*\Lambda'G_s\oplus\textup{diag}(\lambda_1,\ldots,\lambda_m)\]
where $\Lambda'=\textup{diag}(\lambda_{m+1},\ldots,\lambda_n)$. It follows that
\[\begin{array}{rcl}
{\cal B}_\Lambda(\mu_1,t)&=&W(F_t^*\Lambda F_t)\\
&=&\textup{conv}[W(G_s^*\Lambda'G_s)\cup\textup{conv}\{\lambda_1,\ldots,\lambda_m\}]\\
&=&\textup{conv}[{\cal B}_{\Lambda'}(\mu_1,s)\cup W(S)].
\end{array}\]
\end{proof}

\section{Cases for $\mu_1$}
To prove Theorem \ref{main}, we consider cases depending on whether or not $\mu_1$ is in the \textit{rank-2 numerical range}. Choi, Kribs, and \.Zyczkowski proposed a ``compression formalism'' approach to solve the quantum error correction problem \cite{mc06, mC062}, and this led them to define the \textit{rank}-$k$ \textit{numerical range} of $A\in\C^{n\times n}$ as
\[
\Lambda_k(A):=\{\lambda\in\C: PAP=\lambda P,\ \textup{for some}\ \textup{rank-}k\ \textup{orthoprojection}\ P\}.\]
Note that
$\lambda\in \Lambda_k(A)$ if and only if $\lambda I_k$ is a scalar compression of $A$. It is shown in \cite{cL07} that $\Lambda_k(A)\neq\varnothing$ if $k<n/3+1$, and as a consequence, $\Lambda_2(A)\neq \varnothing$ if $n\geq 4$. The classical numerical range of $A$ is $W(A)=\Lambda_1(A)$ which is convex by the Toeplitz-Hausdorff Theorem. It turns out that in general, $\Lambda_k(A)$ is convex as established in \cite{cL08,hW08}.

If $A$ is normal, then the main result in \cite{cL08} implies
\begin{equation}\label{gen1}\Lambda_2(A)=\displaystyle\bigcap_{\lambda\in \sigma(A)}\textup{conv}[\sigma(A)\ominus \{\lambda\}]\end{equation}
(where $\ominus$ denotes set difference counting multiplicities) confirming Conjecture 2.8 in \cite{mc06}.

Let $A\in\C^{n\times n}$ be normal with eigenvalues $\lambda_1,\ldots,\lambda_n$ such that no $\lambda_j$ is in the interior of $W(A)$. Suppose $\mu_1\in W(A)\setminus [\Lambda_2(A)\cup\partial W(A)]$. By \eqref{gen1}, there exists $\lambda_a\in \sigma(A)$ such that
\[\mu_1\notin \textup{conv}[\sigma(A)\ominus\{\lambda_a\}].\]
Since $\mu_1\in W(A)$, $\lambda_a\notin \Lambda_2(A)$, and necessarily $\lambda_a$ has multiplicity $1$. Moreover, there exist $\lambda_{a-1},\lambda_{a+1}\in\sigma(A)$ (modulo $n$) for which $\mu_1\in\textup{conv}\{\lambda_{a-1},\lambda_a,\lambda_{a+1}\}$. Note that the eigenvalues are necessarily consecutive in the boundary of $W(A)$.

If there exists another $\lambda_b\neq \lambda_a$ with the property
\[\mu_1\notin \textup{conv}[\sigma(A)\ominus\{\lambda_b\}],\]
then we claim that $\lambda_b=\lambda_{a-1}$ or $\lambda_b=\lambda_{a+1}$. Otherwise,
\[\mu_1\in\textup{conv}\{\lambda_{a-1},\lambda_a,\lambda_{a+1}\}\subseteq\textup{conv}[\sigma(A)\setminus\{\lambda_b\}]\subseteq \textup{conv}[\sigma(A)\ominus\{\lambda_b\}],\]
a contradiction. Without loss of generality, assume $\lambda_b=\lambda_{a+1}$ which satisfies $\lambda_{a+1}\notin \Lambda_2(A)$, and so $\lambda_{a+1}$ has multiplicity $1$ necessarily. Since $\mu_1\notin \textup{conv}[\sigma(A)\ominus\{\lambda_{a+1}\}]$ by assumption, $\mu_1\in\textup{conv}\{\lambda_a,\lambda_{a+1},\lambda_{a+2}\}$. Hence, $\mu_1\in\textup{conv}\{\lambda_{a-1},\lambda_a,\lambda_{a+1}\}\cap\textup{conv}\{\lambda_a,\lambda_{a+1},\lambda_{a+2}\}
$. 

Thus, we consider the following cases regarding $\mu_1$:\\
\noindent\textbf{Case 1}: $\mu_1\in \Lambda_2(A)$.\\

\noindent\textbf{Case 2}: $\mu_1\in \partial W(A)\cap [W(A)\setminus\Lambda_2(A)]$.\\

\noindent\textbf{Case 3}: $\mu_1\in W(A)\setminus [ \Lambda_2(A)\cup \partial W(A)]$ and there exists unique $\lambda_a\in \sigma(A)\setminus \Lambda_2(A)$ for which
\[\mu_1\notin \textup{conv}[\sigma(A)\ominus\{\lambda_a\}].\]

\noindent\textbf{Case 4}: $\mu_1\in W(A)\setminus [ \Lambda_2(A)\cup \partial W(A)]$ and there exist $\lambda_a,\lambda_{a+1}\in \sigma(A)\setminus \Lambda_2(A)$ for which
\[\mu_1\in\textup{conv}\{\lambda_{a-1},\lambda_a,\lambda_{a+1}\}\cap\textup{conv}\{\lambda_a,\lambda_{a+1},\lambda_{a+2}\}
\] and the intersection is of two nondegenerate triangular regions. 

We start by proving \textbf{Case 1} of Theorem \ref{main}.

\begin{proposition}\label{inrank2}
Let $A\in\C^{n\times n}$ be normal and $\mu_1\in \Lambda_2(A)$. Then 
\[\textup{conv}[{\cal B}_A(\mu_1)]={\cal R}_A(\mu_1).\]
\end{proposition}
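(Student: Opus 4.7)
The plan is to reduce the claim to a computation using the Li--Sze formula \eqref{gen1}. Observe that by the first line of \eqref{region}, the hypothesis $\mu_1 \in \Lambda_2(A)$ forces $\mathcal{R}_A(\mu_1) = W(A)$, so the proposition is equivalent to the equality $\textup{conv}[\mathcal{B}_A(\mu_1)] = W(A)$. The inclusion $\textup{conv}[\mathcal{B}_A(\mu_1)] \subseteq W(A)$ is immediate: every element of $\mathcal{B}_A(\mu_1)$ is an eigenvalue of a size-$2$ compression of $A$ and hence lies in $W(A)$, which is convex by the Toeplitz--Hausdorff Theorem.

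For the reverse inclusion, I would use the normality of $A$, which yields $W(A) = \textup{conv}(\sigma(A))$. It then suffices to prove the pointwise statement $\sigma(A) \subseteq \mathcal{B}_A(\mu_1)$. Fix $\lambda_i \in \sigma(A)$ with a unit eigenvector $u_i$. By normality, $u_i^\perp$ is invariant under both $A$ and $A^*$, so the compression $A|_{u_i^\perp}$ is itself normal with spectrum $\sigma(A) \ominus \{\lambda_i\}$; consequently its numerical range is $\textup{conv}[\sigma(A) \ominus \{\lambda_i\}]$. Applying the characterization \eqref{gen1}, which expresses $\Lambda_2(A)$ for normal $A$ as the intersection $\bigcap_{\lambda \in \sigma(A)} \textup{conv}[\sigma(A) \ominus \{\lambda\}]$, the assumption $\mu_1 \in \Lambda_2(A)$ guarantees $\mu_1 \in \textup{conv}[\sigma(A) \ominus \{\lambda_i\}]$. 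Hence there exists a unit vector $v_1 \in u_i^\perp$ with $v_1^* A v_1 = \mu_1$.

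With this $v_1$ in hand, I would set $V := [v_1 \ u_i] \in \C^{n \times 2}$, which is an isometry since $v_1 \perp u_i$. Normality gives $u_i^* A = \lambda_i u_i^*$, from which $u_i^* A v_1 = \lambda_i u_i^* v_1 = 0$ and $v_1^* A u_i = \lambda_i v_1^* u_i = 0$. Therefore $V^* A V = \textup{diag}(\mu_1, \lambda_i)$, which has the required upper-triangular form, witnessing $\lambda_i \in \mathcal{B}_A(\mu_1)$. Taking the convex hull over all $\lambda_i \in \sigma(A)$ completes the reverse inclusion.

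There is no genuine obstacle in this case: the content of the proposition is essentially the Li--Sze description \eqref{gen1} of $\Lambda_2(A)$ for normal $A$, together with the elementary fact that an eigenvector of a normal matrix can always be paired orthogonally as the second column of a triangularizing isometry. The real difficulties of Theorem \ref{main} appear in Cases 2--4, where $\mu_1 \notin \Lambda_2(A)$ and the region $\mathcal{R}_A(\mu_1)$ becomes strictly smaller than $W(A)$.
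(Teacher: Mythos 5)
Your proposal is correct and takes essentially the same route as the paper: in both, the key step is that \eqref{gen1} gives $\mu_1\in\textup{conv}[\sigma(A)\ominus\{\lambda_i\}]$ for every $\lambda_i\in\sigma(A)$, whence $\lambda_i\in{\cal B}_A(\mu_1)$ and $\textup{conv}[{\cal B}_A(\mu_1)]=W(A)={\cal R}_A(\mu_1)$. The only difference is that the paper cites Proposition \ref{withzero} for the step $\lambda_i\in{\cal B}_A(\mu_1)$, while you unpack it by explicitly building the isometry $V=[v_1\ u_i]$ with $v_1\in u_i^\perp$ realizing $\mu_1$, which is the same underlying construction made self-contained.
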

\begin{proof}
For each $\lambda_i\in \sigma(A)$, $\mu_1\in \textup{conv}[\sigma(A)\ominus \{\lambda_i\}]$. Then $\lambda_i\in{\cal B}_A(\mu_1)$ due to Proposition \ref{withzero}, and so $\textup{conv}[{\cal B}_A(\mu_1)]=W(A)={\cal R}_A(\mu_1)$.
\end{proof}

Next, we will use the following observation to prove \textbf{Case 2}, which will be the content of Proposition \ref{twocases}.

\begin{lemma}\label{corner}
Let $A\in \C^{n\times n}$ be normal with eigenvalues $\lambda_1,\ldots,\lambda_n$ such that no eigenvalue is in the interior $W(A)$. If $\lambda_i\in \sigma(A)\setminus \Lambda_2(A)$, then $\lambda_i$ is a corner of $W(A)$.
\end{lemma}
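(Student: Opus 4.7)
The plan is to argue by contrapositive using the characterization (\ref{gen1}) of the rank-2 numerical range. Recall that for normal $A$, $W(A)=\textup{conv}(\sigma(A))$, and the corners of $W(A)$ are exactly the extreme points of this polygon. So I want to show: if $\lambda_i\in\sigma(A)$ is not a corner of $W(A)$, then $\lambda_i\in\Lambda_2(A)$. (The hypothesis that no eigenvalue lies in the interior is not strictly needed for the implication itself; the conclusion is automatic for interior points.)

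Here is the plan in more detail. Suppose $\lambda_i\notin\Lambda_2(A)$. By \eqref{gen1}, there is some $\lambda_k\in\sigma(A)$ such that $\lambda_i\notin\textup{conv}[\sigma(A)\ominus\{\lambda_k\}]$. First I would rule out $\lambda_k\neq\lambda_i$: in that case $\sigma(A)\ominus\{\lambda_k\}$ still contains $\lambda_i$ as a multiset element, so trivially $\lambda_i\in\textup{conv}[\sigma(A)\ominus\{\lambda_k\}]$, a contradiction. Therefore $\lambda_k=\lambda_i$. Next I would rule out the case where $\lambda_i$ has geometric multiplicity $\geq 2$: then a copy of $\lambda_i$ still belongs to the multiset $\sigma(A)\ominus\{\lambda_i\}$, giving the same contradiction. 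So $\lambda_i$ is a simple eigenvalue and
\[
\lambda_i\notin\textup{conv}[\sigma(A)\setminus\{\lambda_i\}].
\]

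Finally, since $W(A)=\textup{conv}(\sigma(A))$ and removing a point that is a convex combination of the others would not change the convex hull, the condition above says exactly that $\lambda_i$ is an extreme point of the polygon $W(A)$, i.e., a corner. The only subtlety to check cleanly is the multiset bookkeeping: writing $\sigma(A)=\{\lambda_j\}_{j=1}^n$ with multiplicities, $\sigma(A)\ominus\{\lambda_k\}$ removes exactly one copy, so the two potential contradictions above indeed rest on the simple fact that at least one copy of $\lambda_i$ remains whenever $\lambda_k\neq\lambda_i$ or when the multiplicity of $\lambda_i$ exceeds one. Since there is no real computational step, I do not anticipate any serious obstacle; the only thing to be careful about is the distinction between $\sigma(A)\ominus\{\lambda_k\}$ (multiset difference) and $\sigma(A)\setminus\{\lambda_k\}$ (set difference), which is exactly what forces $\lambda_i$ to be simple.
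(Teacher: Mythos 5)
Your proof is correct, and it takes a slightly different route from the paper's. The paper argues by contrapositive and uses the hypothesis that no eigenvalue lies in the interior: assuming $\lambda_i$ is not a corner, it places $\lambda_i$ in the relative interior of an edge $(\lambda_a,\lambda_b)\subseteq\partial W(A)$ with $\lambda_a,\lambda_b\in\sigma(A)\ominus\{\lambda_i\}$, so $\lambda_i\in\textup{conv}[\sigma(A)\ominus\{\lambda_i\}]$, and then invokes \eqref{gen1} to conclude $\lambda_i\in\Lambda_2(A)$ (the containments $\lambda_i\in\textup{conv}[\sigma(A)\ominus\{\lambda\}]$ for $\lambda\neq\lambda_i$ being automatic, which the paper leaves implicit). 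You instead argue in the forward direction from $\lambda_i\notin\Lambda_2(A)$: via \eqref{gen1} some $\textup{conv}[\sigma(A)\ominus\{\lambda_k\}]$ fails to contain $\lambda_i$, the multiset bookkeeping forces $\lambda_k=\lambda_i$ and $\lambda_i$ simple, and then $\lambda_i\notin\textup{conv}[\sigma(A)\setminus\{\lambda_i\}]$ says precisely that $\lambda_i$ is an extreme point of $W(A)=\textup{conv}(\sigma(A))$, i.e.\ a corner. What your route buys is generality: as you correctly note, the hypothesis that no eigenvalue is in the interior is not needed — an interior eigenvalue is automatically not extreme, hence already in $\Lambda_2(A)$ by the same reasoning — whereas the paper's proof genuinely invokes it to identify the edge $[\lambda_a,\lambda_b]$. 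One small remark: you write ``geometric multiplicity'' where simply ``multiplicity'' is meant; since $A$ is normal the two coincide, so nothing breaks, but what matters for the multiset $\sigma(A)$ is algebraic multiplicity.
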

\begin{proof}
Suppose $\lambda_i$ is not a corner. Since no eigenvalue is in the interior of $W(A)$, there exist corners $\lambda_a, \lambda_b$ such that $\lambda_i\in (\lambda_a,\lambda_b)\subseteq\partial W(A)$. It follows that \[\lambda_i\in [\lambda_a,\lambda_b]\subseteq \textup{conv}[\sigma(A)\ominus \{\lambda_i\}].\]
This implies $\lambda_i\in \Lambda_2(A)$.
\end{proof}

Let $A\in\C^{n\times n}$ be normal satisfying the assumptions of Theorem \ref{main}. Suppose $\mu_1\in \partial W(A)\cap[W(A)\setminus \Lambda_2(A)]$. Then $\mu_1\in [\lambda_i,\lambda_j]$ where $1\leq i<j\leq n$. Due to the assumption that $\mu_1\notin\Lambda_2(A)$ and the convexity of $\Lambda_2(A)$, we can assume one endpoint is not in $\Lambda_2(A)$, say $\lambda_i\notin\Lambda_2(A)$. If there are only two eigenvalues, namely, $\lambda_i$ and $\lambda_j$ on the line through $[\lambda_i,\lambda_j]$, then $\lambda_j=\lambda_{i+1}$. Otherwise, $[\lambda_i,\lambda_j]\subseteq [\lambda_a,\lambda_b]$ where $a<b$. If both $\lambda_i,\lambda_j\notin\{\lambda_a,\lambda_b\}$, then $\mu_1\in[\lambda_i,\lambda_j]\subseteq\Lambda_2(A)$, a contradiction. Without loss of generality, we assume $\lambda_i=\lambda_a$ and $\lambda_j\neq \lambda_b$. It follows that $\lambda_j\in \Lambda_2(A)$. If there is another $\lambda_k\in (\lambda_i,\lambda_j)$ such that $\mu_1\in [\lambda_j,\lambda_k]$, then $\lambda_k\in \Lambda_2(A)$ which implies $\mu_1\in [\lambda_j,\lambda_k]\subseteq\Lambda_2(A)$, a contradiction. Thus, $\lambda_j=\lambda_{i+1}$.

\begin{proposition}\label{twocases}
Given a normal matrix $A\in\C^{n\times n}$ satisfying the conditions of Theorem \ref{main}, let $\mu_1\in \partial W(A)\cap [W(A)\setminus\Lambda_2(A)]$. Then $\textup{conv}[{\cal B}_A(\mu_1)]={\cal R}_A(\mu_1)$.
\end{proposition}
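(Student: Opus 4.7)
The plan is to split on the two cases in the definition of ${\cal R}_A(\mu_1)$ and prove both inclusions in each. A structural observation I will use throughout is that $\mu_1$ lying on the boundary edge $[\lambda_i,\lambda_{i+1}]\subseteq\partial W(A)$ forces any $t\in{\cal C}_\Lambda(\mu_1)$ to be supported on indices $j$ with $\lambda_j$ on the line $L$ through this edge, by a supporting-line argument applied to $\mu_1=\sum t_j\lambda_j$. Combined with Lemma \ref{corner}, the condition $\lambda_i\notin\Lambda_2(A)$ further rules out eigenvalues on $L$ strictly beyond $\lambda_i$ on the opposite side from $\lambda_{i+1}$, so on $L$ the spectrum of $A$ arrays out in one direction starting from the corner $\lambda_i$, say $\lambda_i,\lambda_{i+1},\ldots,\lambda_k$ in order along $L$.

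When both $\lambda_i,\lambda_{i+1}\notin\Lambda_2(A)$, I first argue that $L\cap\sigma(A)=\{\lambda_i,\lambda_{i+1}\}$, since any third colinear eigenvalue would place one of $\lambda_i,\lambda_{i+1}$ strictly between two other eigenvalues on $L$ and hence in $\Lambda_2(A)$. So $t$ is uniquely supported on $\{i,i+1\}$, and Proposition \ref{withzero} applied with $\Lambda'=\textup{diag}(\lambda_i,\lambda_{i+1})$ gives ${\cal B}_\Lambda(\mu_1,t)=\textup{conv}[{\cal B}_{\Lambda'}(\mu_1,s)\cup W(S)]$. Since $\mu_1\notin\sigma(\Lambda')$, the vectors $\sqrt{s}$ and $\Lambda'\sqrt{s}$ are linearly independent in $\C^2$, so $\{\sqrt{s},\Lambda'\sqrt{s}\}^\perp=\{0\}$ and ${\cal B}_{\Lambda'}(\mu_1,s)=\varnothing$. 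Therefore ${\cal B}_A(\mu_1)=W(S)={\cal R}_A(\mu_1)$, giving both inclusions simultaneously.

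In the remaining case ${\cal R}_A(\mu_1)=\textup{conv}[\sigma(A)\ominus\{\lambda_i\}]$, I prove $\supseteq$ by showing each $\lambda_j\in\sigma(A)\setminus\{\lambda_i\}$ lies in ${\cal B}_A(\mu_1)$: taking $v_2=e_j$ and using Toeplitz--Hausdorff on $\textup{diag}(\lambda_\ell)_{\ell\neq j}$, one selects $v_1\perp e_j$ with $v_1^*\Lambda v_1=\mu_1$, and the off-diagonal identity holds automatically since $v_1$ has no support at coordinate $j$. The only nontrivial verification is $\mu_1\in\textup{conv}[\sigma(A)\ominus\{\lambda_{i+1}\}]$ when $j=i+1$, which uses \eqref{gen1} and $\lambda_{i+1}\in\Lambda_2(A)$ to rewrite $\lambda_{i+1}$ itself as a convex combination of $\sigma(A)\ominus\{\lambda_{i+1}\}$ and substitute into $\mu_1=s\lambda_i+(1-s)\lambda_{i+1}$. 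For $\subseteq$, Proposition \ref{withzero} gives ${\cal B}_\Lambda(\mu_1,t)=\textup{conv}[{\cal B}_{\Lambda'}(\mu_1,s)\cup W(S)]$ with $\Lambda'$ the restriction to the support of $t$; since $\mu_1$ lies strictly between $\lambda_i$ and the next eigenvalue on $L$, $t_i>0$, so $\lambda_i\notin S$ and $W(S)\subseteq\textup{conv}(\sigma(A)\setminus\{\lambda_i\})$. Rotating $L$ to the real axis makes $\Lambda'$ real Hermitian, and Cauchy interlacing on the $2\times 2$ compression forces $\mu_1$ (being strictly smaller on $L$ than the second-smallest eigenvalue of $\Lambda'$, which is at least $\lambda_{i+1}$) to play the role of the smaller Ritz value, so the other Ritz value lies in $[\lambda_{i+1},\lambda_k]\subseteq\textup{conv}(\sigma(A)\setminus\{\lambda_i\})$. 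The edge case $\mu_1=\lambda_i$ collapses to $t=e_i$ uniquely, yielding ${\cal B}_\Lambda(\mu_1,e_i)=\textup{conv}(\sigma(A)\setminus\{\lambda_i\})$ directly. The main obstacle I anticipate is the bookkeeping here: $\lambda_{i+1}\in\Lambda_2(A)$ permits several eigenvalues colinear with $[\lambda_i,\lambda_{i+1}]$, so both the supporting-line reduction to a lower-dimensional Hermitian problem and the Cauchy interlacing bound on ${\cal B}_{\Lambda'}(\mu_1,s)$ must be made precise uniformly across all admissible supports of $t$.
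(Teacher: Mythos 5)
Your proposal is correct and follows essentially the same approach as the paper: both reduce to the support of $t$ via the supporting-line observation, split according to whether $\lambda_{i+1}$ is in $\Lambda_2(A)$, invoke Proposition~\ref{withzero} to factor out the zero-support part, and use Cauchy interlacing on the colinear sub-block to bound the remaining Ritz value. The only cosmetic differences are that you establish the inclusion ${\cal R}_A(\mu_1)\subseteq\textup{conv}[{\cal B}_A(\mu_1)]$ by constructing explicit isometries $V=[v_1\ e_j]$ rather than citing Proposition~\ref{withzero} and interlacing, and you let $\Lambda'$ vary with the support of $t$ rather than fixing it as the full colinear block $\textup{diag}(\lambda_i,\ldots,\lambda_m)$; neither changes the substance of the argument.
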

\begin{proof}
Suppose $\mu_1=\lambda_i$, where $\lambda_i\notin \Lambda_2(A)$. By Lemma \ref{corner}, $\mu_1$ is a corner and necessarily has multiplicity $1$. It follows that ${\cal C}_\Lambda(\mu_1)=\{e_i\}$, where $e_i$ is the $i^{th}$ standard basis vector in $\C^n$. By Proposition \ref{givenzprop1},
\[{\cal B}_A(\mu_1)={\cal B}_A(\mu_1,e_i)=\textup{conv}[\sigma(A)\ominus \{\lambda_i\}]={\cal R}_A(\mu_1).\]

If $\mu_1\in \partial W(A)\cap [W(A)\setminus\Lambda_2(A)]$, then $\mu_1\in [\lambda_i,\lambda_{i+1}]$ and without loss of generality, assume $\lambda_i\notin\Lambda_2(A)$. If $\mu_1\notin\sigma(A)$, then $\mu_1\in (\lambda_i,\lambda_{i+1})$. Due to Lemma \ref{corner}, $\lambda_i$ is a corner of $W(A)$. Consider two cases on whether or not $\lambda_{i+1}$ is in $ \Lambda_2(A)$. If $\lambda_{i+1}\notin \Lambda_2(A)$, then $\lambda_{i+1} $ is also a corner of $W(A)$ by Lemma \ref{corner}. Since $(\lambda_i,\lambda_{i+1})\subseteq \partial W(A)$ and the endpoints are corners each with multiplicity $1$, ${\cal C}_\Lambda(\mu_1)=\{v\}$, where $v=\sqrt{1-p}e_i+\sqrt{p}e_{i+1}$ for some $p\in (0,1)$. By Proposition \ref{givenzprop1},
\[{\cal B}_A(\mu_1)={\cal B}_A(\mu_1,v)=\textup{conv}[\sigma(A)\ominus \{\lambda_i,\lambda_{i+1}\}]={\cal R}_A(\mu_1).\]
Finally, if $\lambda_{i+1}\in \Lambda_2(A)$, let $[\lambda_i,\lambda_m]\subseteq \partial W(A)$ be the largest line segment on $\partial W(A)$ containing $\mu_1$. By Cauchy interlacing, ${\cal B}_{\Lambda'}(\mu_1)=[\lambda_{i+1},\lambda_m]$ where $\Lambda'=\textup{diag}(\lambda_i,\lambda_{i+1},\ldots,\lambda_m)$. Moreover, Cauchy interlacing and Proposition \ref{withzero} guarantee $\sigma(A)\ominus \{\lambda_i\}\subseteq {\cal B}_A(\mu_1)$, and so ${\cal R}_A(\mu_1)\subseteq {\cal B}_A(\mu_1)$. For the reverse inclusion, observe that any $t\in {\cal C}_\Lambda(\mu_1)$ has zero entries on the $j^{th}$ position where $j\in J:=\{1,\ldots,i-1,m+1,\ldots,n\}$. Let $S:=\{\lambda_j:j\in J\}$. By Propositions \ref{givenzprop1} and \ref{withzero},
\[
\begin{array}{rcl}{\cal B}_A(\mu_1)=\displaystyle\bigcup_{t\in {\cal C}_\Lambda(\mu_1)}{\cal B}_\Lambda(\mu_1,t)
&=&\displaystyle\bigcup_{s\in {\cal C}_{\Lambda'}(\mu_1)}\textup{conv}[{\cal B}_{\Lambda'}(\mu_1,s)\cup W(S)]\\
&\subseteq& \displaystyle\bigcup_{s\in {\cal C}_{\Lambda'}(\mu_1)}\textup{conv}[ [\lambda_{i+1},\lambda_m]\cup W(S)]\\
&=&\textup{conv}[\sigma(A)\ominus \{\lambda_i\}]={\cal R}_A(\mu_1).
\end{array}\]\end{proof}

As we will see in Corollary \ref{distinct}, it suffices to consider normal matrices with distinct eigenvalues. This will greatly simplify the notations and arguments in the remainder of this section and in Sections 4-5.

\begin{proposition}\label{multip}
Given $D=\bigoplus_{i=1}^n\lambda_i I_{k_i}\in \C^{m\times m}$ where $m=\displaystyle\sum_{i=1}^nk_i$, the multiplicities $k_1,\ldots,k_n\geq 1$, and the eigenvalues $\lambda_1,\ldots,\lambda_n$ are distinct, let $\Lambda:=\textup{diag}(\lambda_1,\ldots,\lambda_n)$, $\Lambda':=\bigoplus_{i=1}^n\lambda_i I_{k_i-1}$, and $\mu_1\in W(D)$. If $\mu_2\in {\cal B}_D(\mu_1)$, then there exists an isometry $V\in \C^{m\times 2}$ having the form $V=\begin{bmatrix} v_{11}& v_{12}\\ 0& v_{22}\end{bmatrix}$ where $v_{11},v_{12}\in \C^n$ such that $\begin{bmatrix} \mu_1& *\\ 0& \mu_2\end{bmatrix}=V^*D V$.
\end{proposition}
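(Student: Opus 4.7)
The plan is to begin with an arbitrary isometry $W$ that already realizes the compression, $W^{*}DW = \begin{bmatrix}\mu_1 & * \\ 0 & \mu_2\end{bmatrix}$, and then to left-multiply $W$ by a carefully chosen unitary drawn from the commutant of $D$. Any such unitary $U$ satisfies $U^{*}DU=D$, so $V:=UW$ remains an isometry and $V^{*}DV=W^{*}DW$; only the support pattern of the columns of $V$ changes. After a preliminary permutation we may assume $D=\Lambda\oplus\Lambda'$, so that one representative of each distinct eigenvalue sits in the top $n$ positions and the $k_i-1$ duplicate copies of each $\lambda_i$ are collected in the bottom $m-n$ positions.

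In this arrangement, the group of unitaries commuting with $D$ is $\bigoplus_{i=1}^{n}U(k_i)$, where $U(k_i)$ acts on the $k_i$ coordinates carrying the eigenvalue $\lambda_i$, namely the single position $i$ inside $\Lambda$ together with the $k_i-1$ positions of $\lambda_i$ inside $\Lambda'$. Let $w_1$ denote the first column of $W$ and, for each $i$, let $S_i$ be this $k_i$-element index set. For each $i$ with $k_i\geq 2$ I choose a unitary $U_i\in U(k_i)$ (for instance a Householder reflection) mapping the restriction $w_1|_{S_i}$ to $\|w_1|_{S_i}\|\,e_i$, where $e_i$ is the standard unit vector supported at the top position $i$; for $k_i=1$ set $U_i:=I$. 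The unitary $U:=\bigoplus_i U_i$ commutes with $D$, and by construction $(Uw_1)_j=0$ for every $j\in\{n+1,\ldots,m\}$. Hence the first column of $V:=UW$ is supported in the top $n$ coordinates, $V$ has the required form $\begin{bmatrix}v_{11}& v_{12}\\ 0& v_{22}\end{bmatrix}$ with $v_{11},v_{12}\in\C^n$, and $V^{*}DV=W^{*}DW=\begin{bmatrix}\mu_1& *\\ 0& \mu_2\end{bmatrix}$.

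There is no substantial obstacle here; the proof is essentially bookkeeping once one observes that the commutant of a normal matrix with repeated eigenvalues acts transitively enough on each eigenspace to rotate any Ritz vector into a preferred subset of coordinates without disturbing the compression. The only item to handle with care is the accounting of indices across the top/bottom partition of the ambient space versus the eigenvalue-by-eigenvalue partition that governs the commutant.
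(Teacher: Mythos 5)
Your proof is correct and follows essentially the same approach as the paper: take an isometry $W$ realizing the compression, apply a unitary $U$ from the commutant of $D$ (a Householder on each eigenspace) to zero out the redundant coordinates of the first column, and observe that $U^{*}DU=D$ so the compression is unchanged. The only cosmetic difference is that you permute $D$ into $\Lambda\oplus\Lambda'$ first and then apply the block unitary on the scattered index sets $S_i$, whereas the paper applies the block-diagonal unitary first (with $W$ partitioned by multiplicities) and permutes afterward; the two orderings are interchangeable and both rely on the implicit convention that $D$ and $\Lambda\oplus\Lambda'$ can be identified via the permutation when reading off the conclusion.
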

\begin{proof}
If $\mu_2\in {\cal B}_D(\mu_1)$, then there exists an isometry $W\in \C^{m\times 2}$ with $\begin{bmatrix} \mu_1& *\\ 0& \mu_2\end{bmatrix}=W^*D W$. Write $W=\begin{bmatrix} w_{11}& w_{12}\\ \vdots& \vdots\\ w_{n1}& w_{n2}\end{bmatrix}$ where $w_{ij}\in \C^{k_i}$ for $i=1,\ldots,n$. For each $i$, there exists unitary $U_i\in \C^{k_i\times k_i}$ such that \[U_iw_{i1}=||w_{i1}||e_1^{(k_i)}\]
where $e_p=e_p^{(q)}$ is the $p^{th}$ standard basis vector in $\C^q$. The matrix $U:=\bigoplus_{i=1}^n U_i\in \C^{m\times m}$ is unitary, and it satisfies $W^*DW=W^*U^*DUW$ since $U$ and $D$ commute. There exists permutation matrix $P\in \R^{m\times m}$ such that $V:=PUW$ has the desired form and $PDP^T=\Lambda\oplus \Lambda'$. Indeed, consider $P=\begin{bmatrix} P_1\\ P_2\end{bmatrix}$ where $P_1\in\C^{n\times m}$ is defined as $P_1^T=\begin{bmatrix} e_1& e_{k_1+1}& \cdots&  e_{k_1+\cdots+k_{n-1}+1}\end{bmatrix}$ and $P_2^T=\begin{bmatrix} e_2& \cdots& e_{k_1}&e_{k_1+2}&\cdots& e_{k_1+k_2}&\cdots& e_{k_1+\cdots+k_{n-1}+2}&\cdots& e_{k_1+\cdots+k_n}\end{bmatrix}$.
\end{proof}

\begin{corollary}\label{distinct}
Assuming the conditions in Proposition \ref{multip}, ${\cal B}_D(\mu_1)=\textup{conv}[{\cal B}_\Lambda(\mu_1)\cup W(\Lambda')]$. Moreover, if $\textup{conv}[{\cal B}_\Lambda(\mu_1)]={\cal R}_\Lambda(\mu_1)$, then $\textup{conv}[{\cal B}_D(\mu_1)]={\cal R}_D(\mu_1)$.
\end{corollary}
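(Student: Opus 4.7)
The plan is to prove the first (set) equality by two inclusions---the forward one by direct computation from Proposition \ref{multip}, the reverse one by explicit isometry construction---and then to deduce the ``moreover'' part by taking $\textup{conv}[\cdot]$ of both sides and matching $\textup{conv}[\mathcal{R}_\Lambda(\mu_1)\cup W(\Lambda')]$ with $\mathcal{R}_D(\mu_1)$ through a case analysis on definition \eqref{region}. For the forward inclusion, I would fix $\mu_2 \in \mathcal{B}_D(\mu_1)$ and, after conjugating $D$ to $\Lambda\oplus\Lambda'$ by the permutation $P$ from Proposition \ref{multip}, obtain an isometry $V = \begin{bmatrix} v_{11} & v_{12} \\ 0 & v_{22}\end{bmatrix}$ with $V^*(\Lambda\oplus\Lambda')V = \begin{bmatrix}\mu_1 & * \\ 0 & \mu_2\end{bmatrix}$. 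Expanding entries and orthonormality yields $v_{11}^*\Lambda v_{11} = \mu_1$, $v_{12}^*\Lambda v_{11} = 0$, $v_{11}^*v_{12} = 0$, $\|v_{11}\| = 1$, $\|v_{12}\|^2 + \|v_{22}\|^2 = 1$, and $\mu_2 = v_{12}^*\Lambda v_{12} + v_{22}^*\Lambda' v_{22}$. Setting $a := \|v_{12}\|^2$, $b := \|v_{22}\|^2$, and normalizing the nonzero blocks, when $a > 0$ the isometry $[v_{11},\, v_{12}/\sqrt{a}] \in \C^{n\times 2}$ realizes some $\nu \in \mathcal{B}_\Lambda(\mu_1)$ with $v_{12}^*\Lambda v_{12} = a\nu$, and when $b > 0$, $v_{22}^*\Lambda' v_{22} = b\omega$ for some $\omega \in W(\Lambda')$; hence $\mu_2 = a\nu + b\omega \in \textup{conv}[\mathcal{B}_\Lambda(\mu_1)\cup W(\Lambda')]$, with degenerate cases handled trivially.

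The reverse inclusion comes from reversing this construction: given $\nu \in \mathcal{B}_\Lambda(\mu_1)$ witnessed by some isometry $[v_{11}, \hat{v}_{12}] \in \C^{n\times 2}$, $\omega \in W(\Lambda')$ witnessed by a unit vector $u \in \C^{m-n}$, and $a, b \geq 0$ with $a+b=1$, the matrix $V = \begin{bmatrix} v_{11} & \sqrt{a}\,\hat{v}_{12} \\ 0 & \sqrt{b}\,u\end{bmatrix}$ is readily checked to be an isometry with $V^*(\Lambda\oplus\Lambda')V = \begin{bmatrix}\mu_1 & * \\ 0 & a\nu + b\omega\end{bmatrix}$, placing $a\nu + b\omega \in \mathcal{B}_D(\mu_1)$. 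In particular $\mathcal{B}_\Lambda(\mu_1)\cup W(\Lambda') \subseteq \mathcal{B}_D(\mu_1)$ (take $a=1$ or $b=1$), and together with the forward inclusion and the convexity of $W(\Lambda')$ (Toeplitz--Hausdorff), this gives the claimed set identity.

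For the ``moreover'' part, apply $\textup{conv}[\cdot]$ to the just-proved identity:
\[\textup{conv}[\mathcal{B}_D(\mu_1)] = \textup{conv}[\mathcal{B}_\Lambda(\mu_1)\cup W(\Lambda')] = \textup{conv}[\mathcal{R}_\Lambda(\mu_1) \cup W(\Lambda')],\]
using the hypothesis $\textup{conv}[\mathcal{B}_\Lambda(\mu_1)] = \mathcal{R}_\Lambda(\mu_1)$ in the last step. The remaining identity $\textup{conv}[\mathcal{R}_\Lambda(\mu_1)\cup W(\Lambda')] = \mathcal{R}_D(\mu_1)$ I expect to be the main technical obstacle, as it requires a case-by-case inspection of the five branches of \eqref{region}. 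Since $\sigma(D)=\sigma(\Lambda)$ as sets and $W(D)=W(\Lambda)$, the bookkeeping reduces to tracking how $\sigma(\cdot)\ominus K$ (with multiplicity) and $\Lambda_2(\cdot)$ differ between $D$ and $\Lambda$; the key observation is that the repeated eigenvalues of $D$ are precisely those appearing in $\sigma(\Lambda')$, hence in $W(\Lambda')$, so adjoining $W(\Lambda')$ to $\mathcal{R}_\Lambda(\mu_1)$ exactly compensates for the multiplicity-driven discrepancy and produces the desired match with $\mathcal{R}_D(\mu_1)$.
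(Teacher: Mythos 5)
Your proof of the first set identity is correct and matches the paper in spirit. The forward inclusion is essentially verbatim, and your explicit isometry construction for the reverse inclusion --- building $V=\begin{bmatrix}v_{11}&\sqrt{a}\,\hat v_{12}\\0&\sqrt{b}\,u\end{bmatrix}$ and checking $V^*V=I_2$, $\hat v_{12}^*\Lambda v_{11}=0$, $(2,2)$-entry $=a\nu+b\omega$ --- is a valid alternative to the paper's route of simply citing Proposition~\ref{withzero} (applied to a weight $t\in\mathcal C_D(\mu_1)$ that vanishes on the $\Lambda'$-block). Both yield the identity.

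For the ``moreover'' part, your plan is workable but is strictly harder than necessary, and the hard half is precisely the half you leave as a sketch. You aim to establish the two-sided equality $\textup{conv}[\mathcal R_\Lambda(\mu_1)\cup W(\Lambda')]=\mathcal R_D(\mu_1)$ by direct case analysis of \eqref{region}. The paper needs only the inclusion $\textup{conv}[\mathcal R_\Lambda(\mu_1)\cup W(\Lambda')]\subseteq\mathcal R_D(\mu_1)$, because the reverse inclusion $\mathcal R_D(\mu_1)\subseteq\textup{conv}[\mathcal B_D(\mu_1)]$ comes for free from the observation (made in the introduction right after \eqref{region}, via Proposition~\ref{withzero} and \cite[Theorem 4]{rC13}) that \emph{every generating point of $\mathcal R_D(\mu_1)$ already lies in $\mathcal B_D(\mu_1)$}. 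Once you have $\mathcal B_D(\mu_1)=\textup{conv}[\mathcal R_\Lambda(\mu_1)\cup W(\Lambda')]\subseteq\mathcal R_D(\mu_1)$ and $\mathcal R_D(\mu_1)\subseteq\textup{conv}[\mathcal B_D(\mu_1)]$, the equality of the convex hulls follows. The direction $\mathcal R_D(\mu_1)\subseteq\textup{conv}[\mathcal R_\Lambda(\mu_1)\cup W(\Lambda')]$ that you attempt directly is genuinely delicate --- e.g.\ when $\mu_1\in\Lambda_2(D)\setminus\Lambda_2(\Lambda)$ one must argue that the obstructing eigenvalue $\lambda_a$ (the one with $\mu_1\notin\textup{conv}[\sigma(\Lambda)\ominus\{\lambda_a\}]$) necessarily has multiplicity $\geq 2$ in $D$, hence lies in $W(\Lambda')$ --- and you do not carry this out. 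I recommend replacing the equality claim by the one-sided inclusion and invoking the generating-points observation for the other side; this shortens the case analysis considerably and matches how the rest of the paper uses this corollary.
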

\begin{proof}
For the first part, it suffices to prove the $\subseteq$ inclusion due to Proposition \ref{withzero}. Let $\mu_2\in {\cal B}_D(\mu_1)$. By Proposition \ref{multip}, there exists an isometry $V\in \C^{m\times 2}$ having the form $V=\begin{bmatrix} v_{11}& v_{12}\\ 0& v_{22}\end{bmatrix}$ where $v_{11},v_{12}\in \C^n$ such that $\begin{bmatrix} \mu_1& *\\ 0& \mu_2\end{bmatrix}=V^*D V$.
Hence, \[\mu_2=v_{12}^*\Lambda v_{12}+v_{22}^*\Lambda'v_{22}.\] If $v_{12}=0$, then the assertion holds. Assume both $v_{12}$ and $v_{22}$ are nonzero. Then \[\mu_2=||v_{12}||^2\omega+||v_{22}||^2\zeta\] is a convex combination of $\omega=\dfrac{v_{12}^*\Lambda v_{12}}{v_{12}^*v_{12}}$ and $\zeta=\dfrac{v_{22}^*\Lambda' v_{22}}{v_{22}^*v_{22}}$ since $||v_{12}||^2+||v_{22}||^2=1$. Clearly, $\zeta \in W(\Lambda')$. To see why $\omega\in {\cal B}_\Lambda(\mu_1)$, observe that $\begin{bmatrix} \mu_1& *\\ 0& \omega\end{bmatrix}=W^*\Lambda W$ where $W=\begin{bmatrix}v_{11}& \dfrac{v_{12}}{||v_{12}||}\end{bmatrix}\in \C^{n\times 2}$ is an isometry. Finally, if $v_{12}\neq 0$ but $v_{22}=0$, then the same $W$ works.

For the second part, assume $\textup{conv}[{\cal B}_\Lambda(\mu_1)]={\cal R}_\Lambda(\mu_1)$. Observe that the generating points of ${\cal R}_D(\mu_1)$ are all in ${\cal B}_D(\mu_1)$, and so it suffices to verify that ${\cal B}_D(\mu_1)\subseteq {\cal R}_D(\mu_1)$. The first part implies
\[{\cal B}_D(\mu_1)=\textup{conv}[{\cal B}_\Lambda(\mu_1)\cup W(\Lambda')]=\textup{conv}[{\cal R}_\Lambda(\mu_1)\cup W(\Lambda')].\]
Direct computation reveals that $\textup{conv}[{\cal R}_\Lambda(\mu_1)\cup W(\Lambda')]\subseteq {\cal R}_D(\mu_1)$. 
\end{proof}

Given distinct $\lambda_1,\ldots,\lambda_n\in\C$ with counterclockwise orientation with respect to $\displaystyle\sum_{j=1}^n\lambda_j/n$ such that no $\lambda_j$ is in the interior of $\textup{conv}\{\lambda_j:j=1,\ldots n\}$, let $\mu_1\in \textup{conv}\{\lambda_a,\lambda_b,\lambda_c\}$ be given where $1\leq a<b<c\leq n$. When $\lambda_a,\lambda_b,\lambda_c$ do not lie on the same line, let $r_i^{(abc)}$'s be the unique convex weights of $\mu_1$ from $
\mu_1=r_a^{(abc)}\lambda_a+r_b^{(abc)}\lambda_b+r_c^{(abc)}\lambda_c
$ where $r_i^{(abc)}\geq0$ and $\displaystyle\sum_i r_i^{(abc)}=1$. We can also characterize the convex weights as follows:
\begin{equation}\label{area} 
r_i^{(abc)}=\dfrac{\textup{Area of}\ \textup{conv}[\{\mu_1,\lambda_a,\lambda_b,\lambda_c\}\setminus\{\lambda_i\}]}{\textup{Area of}\ \textup{conv}\{\lambda_a,\lambda_b,\lambda_c\}}
\end{equation}
for each $i=a,b,c$. Define
\[r^{(abc)}:=r_a^{(abc)}e_a+r_b^{(abc)}e_b+r_c^{(abc)}e_c.\]

If $\mu_1\in (\lambda_a,\lambda_b)$ where $1\leq a<b\leq n$, let $r_i^{(ab)}$ be the unique convex weights from $\mu_1=r_a^{(ab)}\lambda_a+r_b^{(ab)}\lambda_b$ where $r_i^{(ab)}\geq 0$ and $\displaystyle\sum_{i}r_i^{(ab)}=1$. In this case, we can characterize the convex weights as follows:
\begin{equation}\label{length}
r_i^{(ab)}=\dfrac{\textup{Length of}\ \textup{conv}[\{\mu_1,\lambda_a,\lambda_b\}\setminus\{\lambda_i\}]}{\textup{Length of}\ \textup{conv}\{\lambda_a,\lambda_b\}}
\end{equation} 
Define 
\[r^{(ab)}:=r_a^{(ab)}\lambda_a+r_b^{(ab)}\lambda_b.\] 

\begin{lemma}\label{oneone}
Given $n\geq 4$, let $\Lambda=\textup{diag}(\lambda_1,\ldots,\lambda_n)$ with distinct $\lambda_j$'s not lying on the same line and arranged in a counterclockwise orientation with respect to $\textup{trace}(\Lambda)/n$ such that no eigenvalue is in the interior of $W(\Lambda)$. Assume $\lambda_2$ is a corner, and let $\mu_1\in \textup{conv}\{\lambda_1,\lambda_2,\lambda_3\}$ be an interior point. Then $r_1^{(123)}t_2-t_1r_2^{(123)}>0$ for any $t\in {\cal C}_\Lambda(\mu_1)\setminus\{r^{(123)}\}$.
\end{lemma}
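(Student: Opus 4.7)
The plan is to convert the inequality into one about signed areas of triangles and exploit the geometric constraints that the corner hypothesis on $\lambda_2$ imposes on $\lambda_4,\ldots,\lambda_n$. Write $[a,b,c]$ for the counterclockwise signed area of the triangle with vertices $a,b,c$, and set $A:=[\lambda_1,\lambda_2,\lambda_3]$; the corner assumption on $\lambda_2$ together with the counterclockwise arrangement forces $\lambda_1,\lambda_2,\lambda_3$ to be in counterclockwise order and to form a nondegenerate triangle, so $A>0$. Since $\mu_1$ lies in the interior of $\textup{conv}\{\lambda_1,\lambda_2,\lambda_3\}$, formula \eqref{area} gives $A\,r_1^{(123)}=[\mu_1,\lambda_2,\lambda_3]$ and $A\,r_2^{(123)}=[\lambda_1,\mu_1,\lambda_3]$. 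Each of $[\,\cdot\,,\lambda_2,\lambda_3]$ and $[\lambda_1,\,\cdot\,,\lambda_3]$ is affine in its free slot, so substituting $\mu_1=\sum_j t_j\lambda_j$ (an affine combination because $\sum_j t_j=1$) and dropping terms with a repeated vertex produces the identities
\[
r_1^{(123)}-t_1=\frac{1}{A}\sum_{j\geq 4}t_j[\lambda_j,\lambda_2,\lambda_3],\qquad r_2^{(123)}-t_2=\frac{1}{A}\sum_{j\geq 4}t_j[\lambda_1,\lambda_j,\lambda_3].
\]

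Two sign claims then pin down the right-hand sides. First, $[\lambda_j,\lambda_2,\lambda_3]\geq 0$ for every $j\geq 4$: the line through $\lambda_2$ and $\lambda_3$ contains the boundary edge of $W(\Lambda)$ leaving the corner $\lambda_2$ counterclockwise, so it is a supporting line of $W(\Lambda)$, and every $\lambda_j$ sits on its closed polygon-side half-plane. Second, $[\lambda_1,\lambda_j,\lambda_3]\leq 0$ for every $j\geq 4$: the counterclockwise arrangement places $\lambda_4,\ldots,\lambda_n$ on the boundary arc from $\lambda_3$ to $\lambda_1$ that avoids $\lambda_2$, hence on the closed half-plane bounded by line $\lambda_1\lambda_3$ opposite to $\lambda_2$. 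Setting $\Delta_1:=r_1^{(123)}-t_1\geq 0$ and $-\Delta_2:=t_2-r_2^{(123)}\geq 0$, a direct expansion gives
\[
r_1^{(123)}t_2-r_2^{(123)}t_1=\Delta_1 t_2+(-\Delta_2)t_1,
\]
a sum of nonnegative terms.

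To promote this to strict inequality, I would first note $t_2\geq r_2^{(123)}>0$, so the first term is strictly positive whenever $\Delta_1>0$. Otherwise $\Delta_1=0$; the hypothesis $t\neq r^{(123)}$ forces $s:=\sum_{j\geq 4}t_j>0$, so some $j_0\geq 4$ has $t_{j_0}>0$, and the vanishing of $\Delta_1$ together with nonnegativity of each summand forces $[\lambda_{j_0},\lambda_2,\lambda_3]=0$, placing $\lambda_{j_0}$ on line $\lambda_2\lambda_3$. Since the $\lambda_k$'s are distinct and lines $\lambda_2\lambda_3$ and $\lambda_1\lambda_3$ meet only at $\lambda_3$, the point $\lambda_{j_0}\neq\lambda_3$ sits strictly off line $\lambda_1\lambda_3$, giving $[\lambda_1,\lambda_{j_0},\lambda_3]<0$ and hence $\Delta_2<0$; combined with $\Delta_1=0$ and $r_1^{(123)}>0$, which force $t_1=r_1^{(123)}>0$, the term $(-\Delta_2)t_1$ is strictly positive. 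The main obstacle is this strictness bookkeeping together with a careful translation of the counterclockwise-plus-corner hypothesis into supporting-line information; once that is in place, the remainder is elementary algebra.
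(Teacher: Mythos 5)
Your proof is correct, and it takes a genuinely different route from the paper. The paper first invokes the affineness of $t\mapsto r_1^{(123)}t_2-t_1r_2^{(123)}$ to reduce to the extreme points of ${\cal C}_\Lambda(\mu_1)$ (via Proposition~\ref{givenzprop2}), then splits into cases according to whether the extreme point has two or three positive entries and whether $\lambda_1$ carries a positive weight; the only nontrivial case is $r^{(12c)}$, which it handles by the imaginary-part identity from Lemma~\ref{rdiff}(ii). You instead work with an arbitrary $t\in{\cal C}_\Lambda(\mu_1)$ directly, expand the two barycentric coordinates $r_1^{(123)},r_2^{(123)}$ via the affineness of signed area in one slot, and observe that the corner hypothesis on $\lambda_2$ makes the line through $[\lambda_2,\lambda_3]$ a supporting line (giving $r_1^{(123)}\geq t_1$) while the chord $[\lambda_1,\lambda_3]$ separates $\lambda_2$ from $\lambda_4,\dots,\lambda_n$ (giving $t_2\geq r_2^{(123)}$); the identity $r_1^{(123)}t_2-t_1r_2^{(123)}=(r_1^{(123)}-t_1)t_2+(t_2-r_2^{(123)})t_1$ then yields nonnegativity, and your bookkeeping (using $t_2\geq r_2^{(123)}>0$, and when $r_1^{(123)}=t_1$ forcing the contributing $\lambda_{j_0}$ onto the supporting line and hence strictly off the line $\lambda_1\lambda_3$) correctly promotes this to a strict inequality. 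The underlying geometric content is the same — signed areas are imaginary parts of conjugate products, so your inequalities are cousins of the paper's $\textup{Im}$ computation — but your decomposition is more self-contained: it bypasses the extreme-point reduction entirely and makes the role of the corner hypothesis explicit as a pair of half-plane inclusions.
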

\begin{proof}
Observe that
\[\begin{array}{rcl}
\mu_1-\lambda_3&=&r_1^{(12j)}(\lambda_1-\lambda_3)+r_2^{(12j)}(\lambda_2-\lambda_3)+r_j^{(12j)}(\lambda_j-\lambda_3)\\
&=&\displaystyle\sum_{j=1}^nt_j(\lambda_j-\lambda_3).\end{array}\] Thus, we can assume that $\lambda_3=0$. 
It suffices to verify
\[r_1^{(123)}r_2^{(abc)}-r_1^{(abc)}r_2^{(123)}>0\ \textup{and}\ r_1^{(123)}r_2^{(ab)}-r_1^{(ab)}r_2^{(123)}>0\]
for all extreme points $r^{(abc)},r^{(ab)}\in\textup{Ext}[{\cal C}_\Lambda(\mu_1)]\setminus\{r^{(123)}\}$. Since $\mu_1$ is an interior point of $\textup{conv}\{\lambda_1,\lambda_2,\lambda_3\}$ and $\lambda_2$ is between $\lambda_1$ and $\lambda_3$, note that $r_1^{(ab)}=0$ and $r_2^{(ab)}>0$, and thus the assertion holds in this case. Similarly, $r_2^{(abc)}>0$. If $r_1^{(abc)}=0$, then the assertion holds. If $r_1^{(abc)}\neq0$, then we can take $\lambda_a=\lambda_1$ and $\lambda_b=\lambda_2$. Note that
\[r_1^{(123)}r_2^{(12c)}-r_1^{(12c)}r_2^{(123)}=\dfrac{\textup{Im}[\overline{(r_3^{(12c)}\lambda_c-\mu_1)}(0-\mu_1)]}{\textup{Im}[\overline{(\lambda_2-0)}(\lambda_1-0)]}>0.\]
\end{proof}

For $z_1,z_2\in \C$ distinct points, $\textup{RHS}[z_1,z_2]$ denotes 
the strict right hand side of the line along $[z_1,z_2]$ with direction $z_2-z_1$. The notation $\overline{\textup{RHS}[z_1,z_2]}$ is the closure of $\textup{RHS}[z_1,z_2]$, and $\textup{LHS}[z_1,z_2]$ and $\overline{\textup{LHS}[z_1,z_2]}$ are defined analogously.

\section{$\mu_1\notin \Lambda_2(A)$: ${\cal B}_A(\mu_1)\subseteq \overline{\textup{RHS}[\lambda_3,w_{123}]}$}

By applying a rotation or translation, we assume the following:
\begin{enumerate}
\item[(A1)] Given $n\geq 4$, let $A\in\C^{n\times n}$ be normal with distinct eigenvalues $\lambda_1,\ldots,\lambda_n$ not lying on the same line and arranged in a counterclockwise orientation with respect to $\textup{trace}(A)/n$ such that no eigenvalue is in the interior of $W(A)$.
\item[(A2)] $\textup{Re}(\lambda_2)<0$, $\lambda_3=0$, and $\textup{Re}(\lambda_j)> 0$ for all $\lambda_j\neq \lambda_2,\lambda_3$.   
\item[(A3)] $\mu_1\notin \partial W(A)$, $\mu_1\notin \Lambda_2(A)$, and $\mu_1\in \textup{conv}\{\lambda_1,\lambda_2,\lambda_3\}$ is an interior point.
\item[(A4)] $\textup{Re}(w_{123})=0$ where $w_{123} $ is the isogonal conjugate of $\mu_1$ with respect to $\textup{conv}\{\lambda_1,\lambda_2,\lambda_3\}$.
\end{enumerate}

\begin{lemma}\label{sine}
Let $A\in\C^{n\times n}$ be normal satisfying (A1)-(A2) and $\mu_1\in W(A)$ satisfying (A3)-(A4). Then
\[\sin[2\arg(\lambda_2)-\arg(\mu_1)]>\sin[2\arg(\lambda_j)-\arg(\mu_1)]\]
for all $\lambda_j\neq\lambda_1,\lambda_2,\lambda_3$.
\end{lemma}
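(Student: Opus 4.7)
The plan is to convert assumption (A4) into an explicit formula for $\phi := \arg(\mu_1)$ in terms of the arguments $\theta_j := \arg(\lambda_j)$ of the eigenvalues, and then exploit the CCW geometry at the corner $\lambda_3 = 0$ to reduce the inequality to positivity of a product of sines. First I would use the geometric definition of the isogonal conjugate at the vertex $\lambda_3 = 0$ to show $\phi = \theta_1 + \theta_2 - \pi/2$. Since the ray $[0, w_{123}]$ is the reflection of the ray $[0, \mu_1]$ across the interior angle bisector at $\lambda_3 = 0$ (whose argument is $(\theta_1+\theta_2)/2$), we obtain $\arg(w_{123}) + \phi \equiv \theta_1 + \theta_2 \pmod{2\pi}$. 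Condition (A4) gives $\arg(w_{123}) \in \{\pi/2, -\pi/2\}$, and the branch is pinned down by the fact that $\mu_1 = r_1\lambda_1 + r_2\lambda_2$ with $r_1, r_2 > 0$ lies in the open arc from $\theta_1$ to $\theta_2$ of length less than $\pi$, forcing $\arg(w_{123}) = \pi/2$ and hence $\phi = \theta_1 + \theta_2 - \pi/2$. An equivalent algebraic derivation expands $\textup{Re}(w_{123})=0$ via Proposition \ref{isogformula1} and uses the convex-weight formulas $r_j = \textup{Im}(\mu_1 \bar\lambda_k)/\textup{Im}(\lambda_j\bar\lambda_k)$; the product-to-sum identity collapses (A4) into $\sin(2\theta_1-\phi) = \sin(2\theta_2-\phi)$, from which the same $\phi$ follows.

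Second, with $\phi$ in hand, applying the product-to-sum identity yields
\[
\sin(2\theta_2 - \phi) - \sin(2\theta_j - \phi) \;=\; 2\cos(\theta_2+\theta_j - \phi)\sin(\theta_2 - \theta_j) \;=\; 2\sin(\theta_1 - \theta_j)\sin(\theta_2 - \theta_j),
\]
where the second equality uses $\cos(\theta_j - \theta_1 + \pi/2) = \sin(\theta_1 - \theta_j)$ after substituting the formula for $\phi$. The lemma therefore reduces to showing $\sin(\theta_1 - \theta_j)\sin(\theta_2 - \theta_j) > 0$ for every $j \neq 1, 2, 3$.

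Third, I would verify this positivity from the convex geometry of $W(A)$ at the vertex $\lambda_3$. In the ambient Case 3 context of Section 3, $\lambda_3$ plays the role of $\lambda_a \notin \Lambda_2(A)$, so Lemma \ref{corner} guarantees that $\lambda_3 = 0$ is a corner of $W(A)$. All other eigenvalues then lie in the closed wedge at the origin bounded by the rays through $\lambda_2$ and $\lambda_4$, with interior angle $\theta_2 - \theta_4 < \pi$. The CCW orientation and the convexity of $W(A)$ give $\theta_4 < \theta_5 < \cdots < \theta_n < \theta_1 < \theta_2$, all contained in an interval of length less than $\pi$. For each $j \in \{4, \ldots, n\}$ we therefore have $\theta_1 - \theta_j \in (0, \pi)$ and $\theta_2 - \theta_j \in (0, \pi)$, so both sines are strictly positive and the product is positive as required.

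The main obstacle is ensuring the wedge angle at $\lambda_3$ is strictly less than $\pi$, which requires $\lambda_3$ to be a genuine corner of $W(A)$ rather than merely a boundary point. This uses $\lambda_3 \notin \Lambda_2(A)$ via Lemma \ref{corner}, a condition implicit in the Case 3 setup but worth flagging explicitly; without it the strict inequality can degenerate to equality, for instance when some $\lambda_j$ is antipodal to $\lambda_2$ from the origin and $\lambda_3$ lies on the edge $[\lambda_2,\lambda_j]$.
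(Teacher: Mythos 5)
Your approach is essentially the paper's: both exploit assumption (A4) to pin down $\arg(\mu_1)$ (the paper's step ``$u(\theta)=\arg(\lambda_1)$'' is your $\arg(\mu_1)=\arg(\lambda_1)+\arg(\lambda_2)-\pi/2$), and then reduce the stated sine inequality by elementary trigonometry. Your product-to-sum identity, which collapses the difference to $2\sin(\arg(\lambda_1)-\arg(\lambda_j))\sin(\arg(\lambda_2)-\arg(\lambda_j))$, is a cleaner route than the paper's closed-interval monotonicity analysis, but it lands on an equivalent positivity target.

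Your third step has a concrete gap. You claim that in the Case 3 context $\lambda_3$ plays the role of $\lambda_a\notin\Lambda_2(A)$, so that Lemma \ref{corner} makes $\lambda_3$ a corner of $W(A)$. But in Section 3 and in the proof of Theorem \ref{main} the unique $\lambda_a$ of Case 3 is relabelled $\lambda_2$, not $\lambda_3$; the hypotheses (A1)--(A4) give $\lambda_2\notin\Lambda_2(A)$ and say nothing about whether $\lambda_3$ is a corner. More importantly, even when $\lambda_3$ is a corner, the strict chain $\arg(\lambda_4)<\cdots<\arg(\lambda_n)<\arg(\lambda_1)<\arg(\lambda_2)$ you invoke is not guaranteed: (A1) permits three eigenvalues to be collinear (it only forbids all of them lying on one line), so one can have $\lambda_4\in(\lambda_3,\lambda_1)$, in which case $\arg(\lambda_4)=\arg(\lambda_1)$, the factor $\sin(\arg(\lambda_1)-\arg(\lambda_4))$ vanishes, and the lemma's strict inequality degenerates to an equality. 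The condition $\lambda_3\notin\Lambda_2(A)$ you propose does not repair this --- it concerns the other endpoint $\arg(\lambda_j)=\arg(\lambda_2)-\pi$. It is worth noting that the paper's own proof has exactly the same blind spot: it places $x=2\pi+2\arg(\lambda_j)-\arg(\mu_1)$ in the closed interval $[\alpha,3\pi-\alpha]$ yet concludes a strict inequality, which only holds on the open interval. So before relying on this lemma you should determine whether these collinear degenerate configurations must be excluded by additional hypotheses or treated separately by a limiting/perturbation argument in Proposition \ref{realpartmup1}.
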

\begin{proof}
Let $\arg(\lambda_2)\in \left[\frac{\pi}{2},\frac{3\pi}{2}\right]$ and $\arg(\lambda_j)\in \left[-\frac{\pi}{2},\frac{\pi}{2}\right]$ for all $\lambda_j\neq\lambda_2,\lambda_3$. By (A1)-(A2), the arguments can be chosen so that $\arg(\lambda_j)\in[\arg(\lambda_2)-\pi, \arg(\lambda_1)]$ for all $\lambda_j\neq \lambda_2,\lambda_3$. If $\textup{Re}(\mu_1)>0$, take $\arg(\mu_1)\in \left[-\frac{\pi}{2},\frac{\pi}{2}\right]$ and if $\textup{Re}(\mu_1)\leq0$, take $\arg(\mu_1)\in \left[\frac{\pi}{2},\textup{arg}(\lambda_2)\right].$

Write $\arg(\lambda_2)=\theta+\varphi$ where $\theta>0$ and
\[\varphi=\begin{cases}\textup{arg}(\mu_1),& \textup{if}\ \textup{Re}(\mu_1)\leq 0\\
\frac{\pi}{2},& \textup{if}\ \textup{Re}(\mu_1)>0.
\end{cases}\]
Define
\[u(\theta)=\begin{cases}\frac{\pi}{2}-\theta,& \textup{if}\ \textup{Re}(\mu_1)\leq 0\\
\arg(\mu_1)-\theta,& \textup{if}\ \textup{Re}(\mu_1)>0.
\end{cases}\]
Due to assumption (A4), $[\lambda_3,\mu_1]$ and $[\lambda_3,w_{123}]$ are symmetric about the angle bisector of the vertex angle at $\lambda_3$. As a consequence, $u(\theta)=\arg(\lambda_1)$. 

Let $\lambda_j\neq\lambda_1,\lambda_2,\lambda_3$. Note that 
\[\textup{arg}(\lambda_2)-\pi\leq \arg(\lambda_j)\leq \arg(\lambda_1)=u(\theta)\]
and \[2\arg(\lambda_2)-\arg(\mu_1)\geq\arg(\lambda_2)+\theta>\arg(\lambda_2).\]
Hence,
\small
\[2\arg(\lambda_2)-\arg(\mu_1)\leq2\pi+2\arg(\lambda_j)-\arg(\mu_1)\leq 2\pi+ 2u(\theta)-\arg(\mu_1).\]
\normalsize
The upper bound can be simplified as
\[ 2\pi+2u(\theta)-\arg(\mu_1)=3\pi-(2\arg(\lambda_2)-\arg(\mu_1)).\]

If we set $\alpha=2\arg(\lambda_2)-\arg(\mu_1)$ and $x=2\pi+2\arg(\lambda_j)-\arg(\mu_1)$, then $x\in [\alpha,3\pi-\alpha]$. Since $0<\arg(\lambda_2)<\alpha$ and the endpoints of $[\alpha,3\pi-\alpha]$ are symmetric about $\frac{3\pi}{2}$, we have that $x\in [\alpha,3\pi-\alpha]\subseteq \left[\frac{\pi}{2},\frac{5\pi}{2}\right]$.

Note that $\sin(t)$ is strictly decreasing on $\left[\alpha,\frac{3\pi}{2}\right]$ and strictly increasing on $\left[\frac{3\pi}{2},3\pi-\alpha\right]$. If $x\in\left[\alpha,\frac{3\pi}{2}\right]$, then 
\[\sin[2\arg(\lambda_2)-\arg(\mu_1)]=\sin(\alpha)>\sin(x)=\sin[2\arg(\lambda_j)-\arg(\mu_1)].\]
If $x\in \left[\frac{3\pi}{2},3\pi-\alpha\right]$, then
\[\sin[2\arg(\lambda_j)-\arg(\mu_1)]=\sin(x)<\sin(3\pi-\alpha)=\sin(\alpha)=\sin[2\arg(\lambda_2)-\arg(\mu_1)].\]
\end{proof}

\begin{lemma}\label{positive}
Let $A\in\C^{n\times n}$ be normal satisfying (A1)-(A2) and $\mu_1\in W(A)$ satisfying (A3)-(A4). If $\lambda_j\neq\lambda_1,\lambda_2,\lambda_3$, then \[\dfrac{\textup{Im}(\mu_1\overline{\lambda_j})}{\textup{Im}(\overline{\mu_1}\lambda_2)}+\dfrac{\textup{Re}(\lambda_2)}{|\lambda_2|^2}\cdot\dfrac{|\lambda_j|^2}{\textup{Re}(\lambda_j)}>0\] if and only if 
\[\sin[2\arg(\lambda_j)-\arg(\mu_1)]<\sin[2\arg(\lambda_2)-\arg(\mu_1)].\]
\end{lemma}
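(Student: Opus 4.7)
The plan is to reduce both sides to expressions purely in the three arguments $\arg(\mu_1), \arg(\lambda_2), \arg(\lambda_j)$ via polar form, and then manipulate using product-to-sum identities. Writing $\lambda_j = |\lambda_j|e^{i\arg(\lambda_j)}$, $\lambda_2 = |\lambda_2|e^{i\arg(\lambda_2)}$, $\mu_1 = |\mu_1|e^{i\arg(\mu_1)}$ and substituting the standard formulas $\textup{Im}(\mu_1\overline{\lambda_j}) = |\mu_1||\lambda_j|\sin(\arg(\mu_1) - \arg(\lambda_j))$, $\textup{Im}(\overline{\mu_1}\lambda_2) = |\mu_1||\lambda_2|\sin(\arg(\lambda_2) - \arg(\mu_1))$, $\textup{Re}(\lambda_2) = |\lambda_2|\cos(\arg(\lambda_2))$, $\textup{Re}(\lambda_j) = |\lambda_j|\cos(\arg(\lambda_j))$ into the LHS of the first inequality and simplifying reduces it to
\[
\dfrac{|\lambda_j|}{|\lambda_2|}\left[\dfrac{\sin(\arg(\mu_1) - \arg(\lambda_j))}{\sin(\arg(\lambda_2) - \arg(\mu_1))} + \dfrac{\cos(\arg(\lambda_2))}{\cos(\arg(\lambda_j))}\right].
\]
Since the prefactor $|\lambda_j|/|\lambda_2|$ is positive, the sign of the LHS is the sign of the bracket.

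The next step is to clear denominators by multiplying the bracket by $\cos(\arg(\lambda_j))\sin(\arg(\lambda_2) - \arg(\mu_1))$. I claim this multiplier is positive. By (A2) we have $\textup{Re}(\lambda_j) > 0$, so $\cos(\arg(\lambda_j)) > 0$. For the sine factor, the key geometric point is that $\mu_1$ is an interior point of the counterclockwise-oriented triangle $\textup{conv}\{\lambda_1,\lambda_2,\lambda_3\}$ with $\lambda_3 = 0$ at the origin; viewed from the origin, $\mu_1$ then has argument strictly between $\arg(\lambda_1)$ and $\arg(\lambda_2)$, yielding $\arg(\lambda_2) - \arg(\mu_1) \in (0,\pi)$ and hence $\sin(\arg(\lambda_2) - \arg(\mu_1)) > 0$. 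With this positivity in hand, the sign of the LHS equals the sign of
\[
\sin(\arg(\mu_1) - \arg(\lambda_j))\cos(\arg(\lambda_j)) + \sin(\arg(\lambda_2) - \arg(\mu_1))\cos(\arg(\lambda_2)).
\]

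To finish, I would apply $\sin A \cos B = \tfrac{1}{2}[\sin(A+B) + \sin(A-B)]$ to each of the two summands with $(A,B) = (\arg(\mu_1) - \arg(\lambda_j), \arg(\lambda_j))$ and $(A,B) = (\arg(\lambda_2) - \arg(\mu_1), \arg(\lambda_2))$ respectively. The $\pm\tfrac{1}{2}\sin(\arg(\mu_1))$ contributions cancel, leaving exactly
\[
\tfrac{1}{2}\bigl[\sin(2\arg(\lambda_2) - \arg(\mu_1)) - \sin(2\arg(\lambda_j) - \arg(\mu_1))\bigr],
\]
which is positive if and only if $\sin(2\arg(\lambda_j) - \arg(\mu_1)) < \sin(2\arg(\lambda_2) - \arg(\mu_1))$. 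The only nonroutine ingredient is the positivity of $\sin(\arg(\lambda_2) - \arg(\mu_1))$; the rest is a mechanical trigonometric reduction.
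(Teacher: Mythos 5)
Your argument is correct and takes essentially the same route as the paper's: both observe that $\cos(\arg(\lambda_j))>0$ and $\sin(\arg(\lambda_2)-\arg(\mu_1))>0$ (the latter is exactly the paper's positivity of $\textup{Im}(\overline{\mu_1}\lambda_2)$), reduce the inequality to a comparison of $\cos\cdot\sin$ terms, and finish with the same product-to-sum identity. Your version simply packages the algebra with a common positive prefactor and supplies a slightly more explicit geometric justification for the sine positivity that the paper leaves implicit.
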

\begin{proof}
Note that $\textup{Re}(\lambda_j)$ and $\textup{Im}(\overline{\mu_1}\lambda_2)$ are both positive, and so we can rewrite the assertion as follows:
\[\dfrac{\textup{Re}(\lambda_j)}{|\lambda_j|^2}\textup{Im}(\mu_1\overline{\lambda_j})>-\dfrac{\textup{Re}(\lambda_2)}{|\lambda_2|^2}\textup{Im}(\overline{\mu_1}\lambda_2).\]
Dividing both sides by $|\mu_1|$, we get the equivalent assertion
\[\begin{array}{rcl}\cos[\arg(\lambda_j)]\sin[\arg(\mu_1)-\arg(\lambda_j)]&=&\dfrac{\textup{Re}(\lambda_j)}{|\lambda_j|}\textup{Im}\left(\dfrac{\mu_1}{|\mu_1|}\cdot\dfrac{\overline{\lambda_j}}{|\lambda_j|}\right)\\
&>&\dfrac{\textup{Re}(\lambda_2)}{|\lambda_2|}\textup{Im}\left(\dfrac{\mu_1}{|\mu_1|}\cdot\dfrac{\overline{\lambda_2}}{|\lambda_2|}\right)\\
&=&\cos[\arg(\lambda_2)]\sin[\arg(\mu_1)-\arg(\lambda_2)].\end{array}
\]
By using a product-to-sum identity, this last inequality is equivalent to 
\[\sin[2\arg(\lambda_j)-\arg(\mu_1)]<\sin[2\arg(\lambda_2)-\arg(\mu_1)].\]
\end{proof}

\begin{lemma}\label{rdiff}
Let $A\in\C^{n\times n}$ be normal satisfying (A1)-(A2) and $\mu_1\in W(A)$ satisfying (A3)-(A4). The following statements hold:
\begin{enumerate}
\item[(i)] $\textup{Re}(\lambda_1)=-\dfrac{r_1^{(123)}}{r_2^{(123)}}\cdot\dfrac{|\lambda_1|^2}{|\lambda_2|^2}\textup{Re}(\lambda_2)$ and $\textup{Im}(\overline{\mu_1}\lambda_2)=r_1^{(123)}\textup{Im}(\overline{\lambda_1}\lambda_2).$

\item[(ii)] Suppose $\mu_1\in \textup{conv}\{\lambda_1,\lambda_2,\lambda_j\}\cup\textup{conv}\{\lambda_2,\lambda_3,\lambda_j\}$ for some $\lambda_j\neq\lambda_1,\lambda_2,\lambda_3$. Then $\dfrac{\textup{Im}(\overline{\lambda_j}\mu_1)}{\textup{Im}(\overline{\lambda_1}\lambda_2)}=\dfrac{r_1^{(123)}r_2^{(y)}-r_1^{(y)}r_2^{(123)}}{r_j^{(y)}}$ where $(y)=(12j),(23j),$ or $(2j)$.

\end{enumerate}
\end{lemma}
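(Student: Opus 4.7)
The plan is to treat the two identities in part (i) separately, then use them as inputs to part (ii). Throughout I would exploit the normalization $\lambda_3=0$ from (A2), which reduces the barycentric decomposition of $\mu_1$ with respect to $\{\lambda_1,\lambda_2,\lambda_3\}$ to $\mu_1=r_1^{(123)}\lambda_1+r_2^{(123)}\lambda_2$, together with the fact that each $|\lambda_i|^2\in\mathbb{R}$, which kills selected terms when I take imaginary parts.

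For the second identity of (i), I would form $\overline{\mu_1}\lambda_2=r_1^{(123)}\overline{\lambda_1}\lambda_2+r_2^{(123)}|\lambda_2|^2$ and take imaginary parts; the second summand vanishes and the identity follows. For the first identity, I would invoke Proposition \ref{isogformula1} with $(z_1,z_2,z_3)=(\lambda_1,\lambda_2,0)$ and weights $r_i^{(123)}$, all strictly positive since $\mu_1$ is interior by (A3). Because $\lambda_3=0$, the third coordinate of $f$ contributes nothing to $f^*\Lambda f$, giving
\[
f^*\Lambda f=\frac{|\lambda_2|^2}{r_1^{(123)}}\lambda_1+\frac{|\lambda_1|^2}{r_2^{(123)}}\lambda_2.
\]
Since $f^*f$ is a positive real number, assumption (A4) that $\textup{Re}(w_{123})=0$ is equivalent to $\textup{Re}(f^*\Lambda f)=0$, and rearranging yields the stated identity.

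For part (ii), the strategy is to express $\lambda_j$ in terms of the other eigenvalues by equating two decompositions of $\mu_1$, then compute $\overline{\lambda_j}\mu_1$ and extract imaginary parts. In the case $(y)=(12j)$, subtracting the expansions $\mu_1=r_1^{(123)}\lambda_1+r_2^{(123)}\lambda_2$ and $\mu_1=r_1^{(12j)}\lambda_1+r_2^{(12j)}\lambda_2+r_j^{(12j)}\lambda_j$ isolates $r_j^{(12j)}\lambda_j=(r_1^{(123)}-r_1^{(12j)})\lambda_1+(r_2^{(123)}-r_2^{(12j)})\lambda_2$. Multiplying its conjugate by $\mu_1$ and taking the imaginary part annihilates the $|\lambda_1|^2$ and $|\lambda_2|^2$ contributions and leaves the cross-term coefficient $r_1^{(123)}r_2^{(12j)}-r_1^{(12j)}r_2^{(123)}$ times $\textup{Im}(\overline{\lambda_1}\lambda_2)$. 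In the cases $(y)=(23j)$ and $(y)=(2j)$, one has $r_1^{(y)}=0$, so $r_j^{(y)}\lambda_j=\mu_1-r_2^{(y)}\lambda_2$ and a direct computation gives $r_j^{(y)}\textup{Im}(\overline{\lambda_j}\mu_1)=r_2^{(y)}\textup{Im}(\overline{\mu_1}\lambda_2)$. Substituting the second identity of part (i) converts this into $r_2^{(y)}r_1^{(123)}\textup{Im}(\overline{\lambda_1}\lambda_2)$, matching the formula with $r_1^{(y)}=0$.

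I expect no serious obstacle; the whole argument is a sequence of algebraic manipulations. The main care is (a) verifying the equivalence $\textup{Re}(w_{123})=0\iff\textup{Re}(f^*\Lambda f)=0$ via Proposition \ref{isogformula1}, and (b) correctly identifying which barycentric weights vanish in the $(23j)$ and $(2j)$ subcases so that the second identity of part (i) closes the gap.
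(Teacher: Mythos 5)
Your proposal is correct and follows essentially the same route as the paper for the first identity of (i) (reading off $\textup{Re}(w_{123})=0$ from Proposition \ref{isogformula1} with $\lambda_3=0$) and for part (ii) (substituting the two barycentric expansions of $\mu_1$ and extracting imaginary parts, with $r_1^{(y)}=0$ in the $(23j)$ and $(2j)$ subcases). The one place you deviate is the second identity of (i): the paper derives $\textup{Im}(\overline{\mu_1}\lambda_2)=r_1^{(123)}\textup{Im}(\overline{\lambda_1}\lambda_2)$ by interpreting both sides as twice the signed area of a triangle (citing the area formula \eqref{area} and an exercise in \cite{rH91}), whereas you just expand $\overline{\mu_1}\lambda_2=r_1^{(123)}\overline{\lambda_1}\lambda_2+r_2^{(123)}|\lambda_2|^2$ and note the second term is real. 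Your computation is shorter and avoids the geometric detour; the conclusion is the same.
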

\begin{proof} By Proposition \ref{isogformula1} and assumptions (A2) and (A4), there exists $c>0$ such that
\[0=c\textup{Re}(w_{123})=\dfrac{|\lambda_2|^2}{r_1^{(123)}}\textup{Re}(\lambda_1)+\dfrac{|\lambda_1|^2}{r_2^{(123)}}\textup{Re}(\lambda_2),\] and hence the first part of assertion (i) holds. Moreover, note that 
\[\begin{array}{rcl}r_1^{(123)}\textup{Im}(\overline{\lambda_1}\lambda_2)&=&2r_1^{(123)}\left[\frac{1}{2}\textup{Im}(\overline{\lambda_1}\lambda_2+\overline{\lambda_2}\lambda_3+\overline{\lambda_3}\lambda_1)\right]\\
&=& 2r_1^{(123)}[\textup{Area of}\ \textup{conv}\{\lambda_1,\lambda_2,\lambda_3\}]\\
&=&2[\textup{Area of}\ \textup{conv}\{\mu_1,\lambda_2,\lambda_3\}]\\
&=&\textup{Im}(\overline{\mu_1}\lambda_2)\end{array}\]
due to \eqref{area} and \cite[Exercise 10 on p. 40]{rH91}. 

Suppose $\mu_1\in\textup{conv}\{\lambda_1,\lambda_2,\lambda_j\}$ for some $\lambda_j\neq\lambda_1,\lambda_2,\lambda_3$. For $(y)=(12j),(23j),(2j)$, $\mu_1=r_1^{(y)}\lambda_1+r_2^{(y)}\lambda_2+r_3^{(y)}\lambda_3+r_j^{(y)}\lambda_j$ where $\lambda_3=0$. On one hand,
\[\textup{Im}[\overline{(r_j^{(y)}\lambda_j-\mu_1)}(0-\mu_1)]=-\textup{Im}[r_j^{(y)}\overline{\lambda_j}\mu_1]=r_j^{(y)}\textup{Im}(\lambda_j\overline{\mu_1}).\]
On the other hand,
\[\begin{array}{rcl}
\textup{Im}[\overline{(r_j^{(y)}\lambda_j-\mu_1)}(0-\mu_1)]&=&\textup{Im}[(-r_1^{(y)}\overline{\lambda_1}-r_2^{(y)}\overline{\lambda_2})(-r_1^{(123)}\lambda_1-r_2^{(123)}\lambda_2)]\\
&=&(r_1^{(123)}r_2^{(y)}-r_1^{(y)}r_2^{(123)})\textup{Im}(\overline{\lambda_2}\lambda_1).
\end{array}\] Observe that $\textup{Im}(\overline{\lambda_2}\lambda_1)\neq 0$ since $\lambda_1,\lambda_2,$ and $\lambda_3=0$ do not lie on the same line, and so assertion (ii) follows.\end{proof}

For $z_1,z_2,z_3\in\C$ that do not lie on the same line, $\angle (z_1,z_2,z_3)$ denotes the acute angle whose vertex is at $z_2$ and has terminal sides at $z_1$ and $z_3$. 

\begin{lemma}\label{xt}
Let $A\in\C^{n\times n}$ be normal satisfying (A1)-(A2) and $\mu_1\in W(A)$ satisfying (A3)-(A4). If $x:=\displaystyle\sum_{j\neq 3}\dfrac{|\lambda_j|^2}{\textup{Re}(\lambda_j)}e_j$ where $e_j$ is the $j^{th}$ standard basis vector in $\C^n$, then $x^Tt<0$ for any $t\in {\cal C}_\Lambda(\mu_1)\setminus\{r^{(123)}\}$.
\end{lemma}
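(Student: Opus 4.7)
The plan is to split $x^Tt$ into the $j\in\{1,2\}$ contribution and the $j\geq 4$ contribution, bound each piece using the earlier lemmas, and then use the moment equation $\sum_jt_j\lambda_j=\mu_1$ to show that the two bounds cancel exactly, with strict cancellation failing only at $t=r^{(123)}$. To collapse the $j\in\{1,2\}$ piece I would use Lemma \ref{rdiff}(i), whose formula $\textup{Re}(\lambda_1)=-(r_1^{(123)}/r_2^{(123)})(|\lambda_1|^2/|\lambda_2|^2)\textup{Re}(\lambda_2)$ rearranges to
\[
\frac{|\lambda_1|^2}{\textup{Re}(\lambda_1)}t_1+\frac{|\lambda_2|^2}{\textup{Re}(\lambda_2)}t_2
=\frac{|\lambda_2|^2}{r_1^{(123)}\textup{Re}(\lambda_2)}\bigl(r_1^{(123)}t_2-r_2^{(123)}t_1\bigr),
\]
which is nonpositive by (A2) and Lemma \ref{oneone}, and strictly negative for $t\neq r^{(123)}$.

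Next I would bound the $j\geq 4$ sum by combining Lemmas \ref{sine} and \ref{positive}: Lemma \ref{sine} supplies the sine inequality that activates the ``iff'' in Lemma \ref{positive}, which after multiplication by $|\lambda_2|^2/\textup{Re}(\lambda_2)<0$ (inequality flips) gives for each $j\geq 4$ the strict bound $|\lambda_j|^2/\textup{Re}(\lambda_j)<(-|\lambda_2|^2/\textup{Re}(\lambda_2))\cdot\textup{Im}(\mu_1\bar\lambda_j)/\textup{Im}(\bar\mu_1\lambda_2)$. Using $\textup{Im}(\bar\mu_1\lambda_2)=r_1^{(123)}\textup{Im}(\bar\lambda_1\lambda_2)$ from Lemma \ref{rdiff}(i) and the elementary identity $\textup{Im}(\mu_1\bar\lambda_j)=\textup{Im}(\bar\lambda_j\mu_1)$, multiplying by $t_j\geq 0$ and summing yields
\[
\sum_{j\geq 4}\frac{|\lambda_j|^2}{\textup{Re}(\lambda_j)}t_j\leq\frac{-|\lambda_2|^2}{r_1^{(123)}\textup{Re}(\lambda_2)\textup{Im}(\bar\lambda_1\lambda_2)}\sum_{j\geq 4}t_j\,\textup{Im}(\bar\lambda_j\mu_1),
\]
with strict inequality whenever some $t_j>0$ for $j\geq 4$.

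The crux will then be a moment-matching identity,
\[
\sum_{j\geq 4}t_j\,\textup{Im}(\bar\lambda_j\mu_1)=\bigl(r_1^{(123)}t_2-r_2^{(123)}t_1\bigr)\textup{Im}(\bar\lambda_1\lambda_2),
\]
which I would prove by applying $\textup{Im}(\bar\mu_1\,\cdot)$ to $\sum_jt_j\lambda_j=\mu_1$ to obtain $\sum_{j\neq 3}t_j\,\textup{Im}(\bar\mu_1\lambda_j)=0$ and then using $\bar\mu_1=r_1^{(123)}\bar\lambda_1+r_2^{(123)}\bar\lambda_2$ (since $\lambda_3=0$) to evaluate $\textup{Im}(\bar\mu_1\lambda_1)=-r_2^{(123)}\textup{Im}(\bar\lambda_1\lambda_2)$ and $\textup{Im}(\bar\mu_1\lambda_2)=r_1^{(123)}\textup{Im}(\bar\lambda_1\lambda_2)$. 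Substituting turns the second-piece bound into the exact negative of the first piece, so $x^Tt\leq 0$, and the inequality is strict for $t\neq r^{(123)}$ because the uniqueness of the convex representation of $\mu_1$ in $\textup{conv}\{\lambda_1,\lambda_2,\lambda_3\}$ (non-degenerate by (A3)) forces some $t_j>0$ with $j\geq 4$, activating the strict form of the bound from Lemma \ref{positive}. The main obstacle will be bookkeeping of signs---$\textup{Re}(\lambda_2)<0$ clashes with the positive $\textup{Im}(\bar\lambda_1\lambda_2)$ (counterclockwise orientation) and positive $\textup{Re}(\lambda_j)$ for $j\geq 4$---and recognizing that the linear constraint $\sum_jt_j\lambda_j=\mu_1$ encodes precisely the algebraic identity needed to make the two pieces cancel rather than merely compete.
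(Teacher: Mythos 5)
Your proof is correct, but it takes a genuinely different route from the paper's. The paper invokes linearity to reduce the claim to the extreme points $r^{(y)}$ of ${\cal C}_\Lambda(\mu_1)$ (after noting that Lemma~\ref{rdiff}(i) forces $x^Tr^{(123)}=0$), and then runs a three-way case analysis on the form of $(y)$: Case~1 $(y)=(12c)$ uses Lemma~\ref{rdiff}(ii) together with Lemmas~\ref{sine} and \ref{positive}; Case~2 $(y)=(23c)$ uses the area formula \eqref{area}; Case~3 $(y)=(2bc)$ or $(2b)$ requires a separate optimization-over-a-segment argument via the Law of Sines. Your argument works directly on an arbitrary $t\in{\cal C}_\Lambda(\mu_1)$: you split $x^Tt$ into the $j\in\{1,2\}$ part (collapsed exactly by Lemma~\ref{rdiff}(i) to a multiple of $r_1^{(123)}t_2-r_2^{(123)}t_1$) and the $j\geq 4$ part (bounded termwise by Lemmas~\ref{sine} and \ref{positive}), and then observe that the constraint $\sum_jt_j\lambda_j=\mu_1$ with $\lambda_3=0$ supplies exactly the identity
\[
\sum_{j\geq 4}t_j\,\textup{Im}(\mu_1\overline{\lambda_j})=\bigl(r_1^{(123)}t_2-r_2^{(123)}t_1\bigr)\textup{Im}(\overline{\lambda_1}\lambda_2),
\]
which makes the two pieces cancel; Lemma~\ref{oneone} and the uniqueness of the representation in $\textup{conv}\{\lambda_1,\lambda_2,\lambda_3\}$ then give strictness for $t\neq r^{(123)}$. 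This is a cleaner unification: it avoids the extreme-point reduction and dispenses with the paper's Case~3 (Law-of-Sines) argument entirely, replacing all three cases by a single algebraic cancellation. One could say your approach makes visible why the inequality holds uniformly over the whole polytope ${\cal C}_\Lambda(\mu_1)$, whereas the paper's vertex-by-vertex computation conceals the cancellation behind separate trigonometric identities in each case.
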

\begin{proof}
It suffices to verify
\[\label{ineqext}x^Tr^{(y)}<0\]
for all extreme points $r^{(y)}\in \textup{Ext}[{\cal C}_\Lambda(\mu_1)]\setminus\{r^{(123)}\}$. Similar to the proof of Lemma \ref{oneone}, we may assume $\lambda_2\in \{\lambda_a,\lambda_b,\lambda_c\}$ when $(y)=(abc)$ or $\lambda_2\in\{\lambda_a,\lambda_b\}$ when $(y)=(ab)$.\\

\noindent\textbf{Case 1}: $(y)=(12c)$ where $3<c\leq n$.\\

By Lemma \ref{rdiff},
\[\begin{array}{rcl}
\vspace{0.2cm}
x^Tr^{(12c)}&=&\dfrac{r_1^{(12c)}}{\textup{Re}(\lambda_1)}|\lambda_1|^2+
\dfrac{r_2^{(12c)}}{\textup{Re}(\lambda_2)}|\lambda_2|^2+\dfrac{r_c^{(12c)}}{\textup{Re}(\lambda_c)}|\lambda_c|^2\\
\vspace{0.2cm}
&=&\dfrac{|\lambda_2|^2}{r_1^{(123)}\textup{Re}(\lambda_2)}[r_1^{(123)}r_2^{(12c)}-r_2^{(123)}r_1^{(12c)}]+\dfrac{r_c^{(12c)}}{\textup{Re}(\lambda_c)}|\lambda_c|^2\\
\vspace{0.2cm}
&=&\dfrac{|\lambda_2|^2}{r_1^{(123)}\textup{Re}(\lambda_2)}\cdot\dfrac{r_c^{(12c)}\textup{Im}(\overline{\lambda_c}\mu_1)}{\textup{Im}(\overline{\lambda_1}\lambda_2)}+\dfrac{r_c^{(12c)}}{\textup{Re}(\lambda_c)}|\lambda_c|^2\\
\vspace{0.2cm}
&=&\dfrac{r_c^{(12c)}|\lambda_2|^2\textup{Im}(\mu_1\overline{\lambda_c})}{[r_1^{(123)}\textup{Im}(\overline{\lambda_1}\lambda_2)]\textup{Re}(\lambda_2)}+\dfrac{r_c^{(12c)}}{\textup{Re}(\lambda_c)}|\lambda_c|^2\\
\vspace{0.2cm}
&=&\dfrac{r_c^{(12c)}|\lambda_2|^2\textup{Im}(\mu_1\overline{\lambda_c})}{\textup{Im}(\overline{\mu_1}\lambda_2)\textup{Re}(\lambda_2)}+\dfrac{r_c^{(12c)}}{\textup{Re}(\lambda_c)}|\lambda_c|^2\\

\vspace{0.2cm}
&=&\dfrac{r_c^{(12c)}|\lambda_2|^2}{\textup{Re}(\lambda_2)}\left[\dfrac{\textup{Im}(\mu_1\overline{\lambda_c})}{\textup{Im}(\overline{\mu_1}\lambda_2)}+\dfrac{\textup{Re}(\lambda_2)}{|\lambda_2|^2}\dfrac{|\lambda_c|^2}{\textup{Re}(\lambda_c)}\right].
\end{array}\]
Thus, $x^Tr^{(12c)}<0$ holds due to Lemmas \ref{sine} and \ref{positive}.

\noindent\textbf{Case 2}: $(y)=(23c)$ where $3< c\leq n$.\\

Using \eqref{area} and \cite[Exercise 10 on p. 40]{rH91}, $x^Tr^{(23c)}<0$ if and only if  
\[\dfrac{|\lambda_c|^2}{\textup{Re}(\lambda_c)}\dfrac{\textup{Im}(\overline{\mu_1}\lambda_2)}{\textup{Im}(\overline{\lambda_c}\lambda_2)} +\dfrac{|\lambda_2|^2}{\textup{Re}(\lambda_2)}\dfrac{\textup{Im}(\overline{\lambda_c}\mu_1)}{\textup{Im}(\overline{\lambda_c}\lambda_2)}<0.\]
Equivalently,
\[\dfrac{\textup{Im}(\mu_1\overline{\lambda_c})}{\textup{Im}(\overline{\mu_1}\lambda_2)}+\dfrac{\textup{Re}(\lambda_2)}{|\lambda_2|^2}\dfrac{|\lambda_c|^2}{\textup{Re}(\lambda_c)}>0\]
since $\textup{Re}(\lambda_2)<0$ and $\textup{Im}(\overline{\mu_1}\lambda_2)>0$. Thus, $x^Tr^{(23c)}<0$ holds due to Lemmas \ref{sine} and \ref{positive}.

\noindent\textbf{Case 3}: $(y)=(2bc)$ where $3<b< c\leq n$ or $(y)=(2b)$ where $3<b\leq n$.\\

Let $\lambda_b'$ be the intersection of $[\lambda_2,\lambda_b]$ and the line containing $[\mu_1,\lambda_3]$. Similarly, let $\lambda_c'$ be the intersection of $[\lambda_2,\lambda_c]$ and the line containing $[\mu_1,\lambda_3]$. If we let $\mu=x+iy$ vary in $[\lambda_b',\lambda_c']$, we can view the statement $x^Tr^{(y)}<0$ as an optimization problem of the real affine function $x^Tr^{(y)}$ over $[\lambda_b',\lambda_c']$. Thus, it suffices to check $x^Tr^{(y)}<0$ at the endpoints $\lambda_b'$ and $\lambda_c'$. Note that $\mu_1\in [\lambda_b',\lambda_c']$, and so $\arg(\mu_1)=\arg(\mu)$. We only verify $x^Tr^{(y)}<0$ at $\mu=\lambda_b'$ as the case when $\mu=\lambda_c'$ is analogous. 

By \eqref{length}, 
\[\mu=\lambda_b'=r_2^{(2b)}\lambda_2+r_b^{(2b)}\lambda_b=\dfrac{|\lambda_b'-\lambda_b|}{|\lambda_2-\lambda_b|}\lambda_2+\dfrac{|\lambda_b'-\lambda_2|}{|\lambda_2-\lambda_b|}\lambda_b,\]
and so \[|\lambda_2-\lambda_b|x^Tr^{(y)}=\dfrac{|\lambda_b'-\lambda_b|}{\textup{Re}(\lambda_2)}|\lambda_2|^2+\dfrac{|\lambda_b'-\lambda_2|}{\textup{Re}(\lambda_b)}|\lambda_b|^2.\]
Hence, $x^Tr^{(y)}<0$ if and only if 
\[\begin{array}{rcl}\cos[\arg(\lambda_b)]\dfrac{|\lambda_b'-\lambda_b|}{|\lambda_b|}&=&\dfrac{\textup{Re}(\lambda_b)|\lambda_b'-\lambda_b|}{|\lambda_b|^2}\\
&>&-\dfrac{\textup{Re}(\lambda_2)|\lambda_b'-\lambda_2|}{|\lambda_2|^2}\\
&=&-\cos[\arg(\lambda_2)]\dfrac{|\lambda_b'-\lambda_2|}{|\lambda_2|}.
\end{array}\] We rewrite the expressions $\dfrac{|\lambda_b'-\lambda_b|}{|\lambda_b|}$ and $\dfrac{|\lambda_b'-\lambda_2|}{|\lambda_2|}$ in terms of the sine function.

By considering $\textup{conv}\{\mu,\lambda_2,\lambda_3\}$, Law of Sines guarantees
\[\dfrac{|\lambda_b'-\lambda_2|}{\sin[\angle(\lambda_2,\lambda_3,\mu)]}=\dfrac{|\lambda_2-\lambda_3|}{\sin[\angle(\lambda_3,\mu,\lambda_2)]}=\dfrac{|\lambda_2|}{\sin[\angle(\lambda_3,\mu,\lambda_2)]},\]
and hence
\[\dfrac{|\lambda_b'-\lambda_2|}{|\lambda_2|}=\dfrac{\sin[\angle(\lambda_2,\lambda_3,\mu)]}{\sin[\angle(\lambda_3,\mu,\lambda_2)]}=\dfrac{\sin[\arg(\lambda_2)-\arg(\mu_1)]}{\sin[\angle(\lambda_3,\mu,\lambda_2)]}\]
since $\arg(\lambda_2)=\arg(\mu_1)+\angle(\lambda_2,\lambda_3,\mu)$. Similarly, by applying Law of Sines on $\textup{conv}\{\mu,\lambda_3,\lambda_b\}$, we get
\[\dfrac{|\lambda_b'-\lambda_b|}{\sin[\angle(\lambda_2,\lambda_3,\lambda_b)-\angle(\lambda_2,\lambda_3,\mu)]}=\dfrac{|\lambda_b-\lambda_3|}{\sin[\pi-\angle(\lambda_3,\mu,\lambda_2)]}=\dfrac{|\lambda_b|}{\sin[\angle(\lambda_3,\mu,\lambda_2)]},\]
and hence
\[\dfrac{|\lambda_b'-\lambda_b|}{|\lambda_b|}=\dfrac{\sin[\angle(\lambda_2,\lambda_3,\lambda_b)-\angle(\lambda_2,\lambda_3,\mu)]}{\sin[\angle(\lambda_3,\mu,\lambda_2)]}=\dfrac{\sin[\arg(\mu_1)-\arg(\lambda_b)]}{\sin[\angle(\lambda_3,\mu,\lambda_2)]}.\]
 Thus, $x^Tr^{(y)}<0$ if and only if 
\[\cos[\arg(\lambda_b)]\sin[\arg(\mu_1)-\arg(\lambda_b)]>-\cos[\arg(\lambda_2)]\sin[\arg(\lambda_2)-\arg(\mu_1)].\]
By using a product-to-sum identity, this last inequality is equivalent to \[\sin[2\arg(\lambda_b)-\arg(\mu_1)]<\sin[2\arg(\lambda_2)-\arg(\mu_1)].\] Thus, $x^Tr^{(y)}<0$ holds due to Lemma \ref{sine}. 
\end{proof}

\begin{proposition}\label{realpartmup1}
Let $A\in\C^{n\times n}$ be normal satisfying (A1)-(A2) and $\mu_1\in W(A)$ satisfying (A3)-(A4). If $t\in {\cal C}_\Lambda(\mu_1)$ has all positive entries, then \[{\cal B}_A(\mu_1,t)\subseteq \textup{conv}[\{w_{123}\}\cup(\sigma(A)\ominus\{\lambda_2\})].\]
\end{proposition}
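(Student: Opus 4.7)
My plan is to reduce the proposition to the positive semidefiniteness of the matrix $Z(t)$ from \eqref{psd}. By Corollary \ref{distinct} I may assume the eigenvalues of $A$ are distinct, so the target set is the polygon $P:=\textup{conv}\{w_{123},\lambda_1,\lambda_3,\lambda_4,\ldots,\lambda_n\}$, whose boundary consists of edges of $W(A)$ not incident to $\lambda_2$ together with the two new edges $[\lambda_3,w_{123}]$ and $[w_{123},\lambda_1]$. Since $F_t^*\Lambda F_t$ is a compression of $\Lambda$, the inclusion $W(F_t^*\Lambda F_t)\subseteq W(A)$ is automatic, so I only need to check that $W(F_t^*\Lambda F_t)$ lies on the $\lambda_2$-free side of each new edge. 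In line with the section title, I will concentrate on $[\lambda_3,w_{123}]$; the symmetric edge $[w_{123},\lambda_1]$ should be handled by replaying the whole argument after relabeling and rotating so that $[w_{123},\lambda_1]$ plays the role of the new imaginary axis.

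Under (A2) and (A4), the line through $\lambda_3=0$ and $w_{123}$ is the imaginary axis, so $W(F_t^*\Lambda F_t)\subseteq\overline{\textup{RHS}[\lambda_3,w_{123}]}$ is equivalent to the Hermitian condition $F_t^*\textup{Re}(\Lambda)F_t\succeq 0$. To convert this into $Z(t)\succeq 0$ I will parameterize $\{\sqrt t,\Lambda\sqrt t\}^\perp$: since $t_1,t_2>0$ and $\lambda_1\neq\lambda_2$, the two orthogonality relations allow me to solve for $\sqrt{t_1}v_1$ and $\sqrt{t_2}v_2$ as explicit linear combinations of the coordinates $u_j:=\sqrt{t_j}v_j$ for $j\geq 3$. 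Substituting back into $v^*\textup{Re}(\Lambda)v=\sum_j\textup{Re}(\lambda_j)|v_j|^2$ and collecting terms should yield exactly $\frac{1}{|\lambda_1-\lambda_2|^2}u^*Z(t)u$, whence the half-plane containment becomes the statement $Z(t)\succeq 0$.

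The main obstacle is proving $Z(t)\succeq 0$. The matrix is the sum of a positive rank-one term (since $\textup{Re}(\lambda_1)>0$), a \emph{negative} rank-one term (since $\textup{Re}(\lambda_2)<0$), and a positive semidefinite diagonal whose first entry vanishes because $\textup{Re}(\lambda_3)=0$. My plan is to couple the two opposing rank-one coefficients through the isogonal identity $\textup{Re}(\lambda_1)=-\frac{r_1^{(123)}}{r_2^{(123)}}\frac{|\lambda_1|^2}{|\lambda_2|^2}\textup{Re}(\lambda_2)$ supplied by Lemma \ref{rdiff}(i), and then apply a Schur-complement or completion-of-square manipulation reducing $u^*Z(t)u\geq 0$ to a weighted scalar inequality in $(t,\lambda)$. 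The positive slack needed to dominate the negative rank-one piece should be provided by Lemma \ref{oneone} ($r_1^{(123)}t_2-r_2^{(123)}t_1>0$), while Lemma \ref{xt} ($x^Tt<0$ for $x_j=|\lambda_j|^2/\textup{Re}(\lambda_j)$) should guarantee that the diagonal carries enough positive mass at the indices $j\geq 4$, with Lemmas \ref{sine}--\ref{positive} translating everything into the requisite sine-inequalities. Once $Z(t)\succeq 0$ is in hand, the chain of equivalences delivers $W(F_t^*\Lambda F_t)\subseteq\overline{\textup{RHS}[\lambda_3,w_{123}]}$, and combining this with the analogous half-plane for $[w_{123},\lambda_1]$ and the automatic $W(A)$-containment yields the desired inclusion $W(F_t^*\Lambda F_t)\subseteq P$.
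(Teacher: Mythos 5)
Your high-level strategy matches the paper exactly: reduce the half-plane containment $\mathcal{B}_A(\mu_1,t)\subseteq\overline{\textup{RHS}[\lambda_3,w_{123}]}$ to $Z(t)\geq 0$, handle the symmetric edge $[\lambda_1,w_{123}]$ by relabeling, and note that $W(A)$-containment is automatic. You also correctly flag Lemmas \ref{rdiff}(i), \ref{oneone}, and \ref{xt} (with Lemmas \ref{sine}--\ref{positive} hiding inside \ref{xt}) as the ingredients. However, the crux of the proposition --- actually proving $Z(t)\geq 0$ --- is left as a plan. You write that a ``Schur-complement or completion-of-square manipulation'' should reduce $u^*Z(t)u\geq 0$ to a scalar inequality, but you do not carry this out, and the obstruction is real: $\Delta=|\lambda_2-\lambda_1|^2\,\textup{diag}(\textup{Re}(\lambda_3)/t_3,\ldots,\textup{Re}(\lambda_n)/t_n)$ is singular precisely because $\textup{Re}(\lambda_3)=0$, so you cannot pivot on $\Delta$ or directly factor it out, and $Z(t)=YXY^*+\Delta$ is a sum of an indefinite rank-two piece and a degenerate diagonal, which does not split into squares in any obvious way.

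The paper's actual mechanism is to regularize: replace $\Delta$ by $\Delta_\epsilon=\Delta+\epsilon|\lambda_2-\lambda_1|^2e_1e_1^*$ (now positive definite), take a QR factorization $\Delta_\epsilon^{-1/2}Y=Q_\epsilon R_\epsilon$, and observe that $YXY^*+\Delta_\epsilon>0$ is equivalent to the $2\times 2$ condition $R_\epsilon XR_\epsilon^*+I_2>0$, which in turn is equivalent to positivity of its trace and its determinant. Expanding each as a Laurent series in $\epsilon$, the leading $1/\epsilon$ coefficients are $a(t)=\textup{Re}(\lambda_1)|\lambda_2|^2/t_1+\textup{Re}(\lambda_2)|\lambda_1|^2/t_2$ and $c(t)=\tfrac{\textup{Re}(\lambda_1)\textup{Re}(\lambda_2)}{t_1t_2}\sum_{j\neq 3}t_j|\lambda_j|^2/\textup{Re}(\lambda_j)$, and these are shown positive via Lemma \ref{oneone} (through the identity in Lemma \ref{rdiff}(i)) and Lemma \ref{xt} respectively; letting $\epsilon\to 0$ gives $Z(t)\geq 0$. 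This $\epsilon$-regularization and $2\times 2$ trace-and-determinant reduction is the essential technical step that your proposal is missing. Without it, you have identified the right target inequality and the right supporting lemmas, but you have not bridged them.

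Two smaller remarks. First, you do not need to invoke Corollary \ref{distinct} here --- assumption (A1) already stipulates distinct eigenvalues. Second, your description of how Lemma \ref{xt} is used (``diagonal carries enough positive mass at indices $j\geq 4$'') is slightly off: the inequality $x^Tt<0$ is a signed sum over all $j\neq 3$, with the negative contribution coming from $j=2$, and it governs the determinant coefficient $c(t)$, not merely the diagonal tail.
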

\begin{proof}
We only prove that ${\cal B}_A(\mu_1,t)\subseteq \overline{\textup{RHS}[\lambda_3,w_{123}]}$. The proof for the inclusion ${\cal B}_A(\mu_1,t)\subseteq \overline{\textup{LHS}[\lambda_1,w_{123}]}$ is similar. 

Elements of ${\cal B}_A(\mu_1,t)$ are of the form $\frac{v^*\Lambda v}{v^*v}$ for all nonzero $v\in \{\sqrt{t},\Lambda\sqrt{t}\}^\perp$. By (A1)-(A4), it suffices to prove $\textup{Re}(v^*\Lambda v)\geq 0$. A basis for $\{\sqrt{t},\Lambda\sqrt{t}\}^\perp$ is given by
\[f_j=\dfrac{\overline{\lambda_j-\lambda_2}}{\sqrt{t_1}}e_1+\dfrac{\overline{\lambda_1-\lambda_j}}{\sqrt{t_2}}e_2+\dfrac{\overline{\lambda_2-\lambda_1}}{\sqrt{t_j}}e_j\]
for $j=3,\ldots,n$ where $e_k$ is the $k^{th}$ standard basis vector in $\C^n$. If $w=[\lambda_3\ \cdots\ \lambda_n]^T$, $\mathbbm 1=[1\ \cdots\ 1]^T\in \C^{n-2}$, and $Z(t)$ be as defined in \eqref{psd}, then $\textup{Re}(v^*\Lambda v)\geq0$ for all $v\in \{\sqrt{t},\Lambda\sqrt{t}\}^\perp$ if and only if
$Z(t)\geq 0$. Set $x=[-\lambda_2\ 1]^T$ and $y=[\lambda_1\ -1]^T$,  
\[
X=\dfrac{\textup{Re}(\lambda_1)}{t_1}xx^*+\dfrac{\textup{Re}(\lambda_2)}{t_2}yy^*,\]
\[Y=[\mathbbm 1\ w],\]
\[
\Delta=|\lambda_2-\lambda_1|^2\textup{diag}\left(\dfrac{\textup{Re}(\lambda_3)}{t_3},\ldots,\dfrac{\textup{Re}(\lambda_n)}{t_n}\right),\]
and
\[\Delta_\epsilon=\Delta+\epsilon|\lambda_2-\lambda_1|^2e_1e_1^*\]
where $\epsilon>0$ and $e_j$ is the $j^{th}$ standard basis vector in $\C^{n-2}$. Then $Z(t)\geq 0$ is equivalent to 
\begin{equation}\label{ver2}
YXY^*+\Delta\geq0.
\end{equation}

We prove \eqref{ver2} by showing $YXY^*+\Delta_\epsilon>0$ and then letting $\epsilon\to 0$. Note that $\Delta_\epsilon>0$ from assumption (A2). Let $\Delta_\epsilon^{-\frac{1}{2}}Y=Q_\epsilon R_\epsilon$ be a $QR$ factorization. The matrix 
\begin{equation}\label{ver3}
YXY^*+\Delta_\epsilon>0
\end{equation}
 if and only if 
\begin{equation}\label{ver4}
R_\epsilon XR_\epsilon^*+I_2>0.
\end{equation}
 Equivalently, 
\begin{equation}\label{ver4a}
\textup{trace}(R_\epsilon X R_\epsilon^*)+2=\textup{trace}(R_\epsilon XR_\epsilon^*+I_2)>0
\end{equation}
and
\begin{equation}\label{ver4b}
1+\textup{trace}(R_\epsilon X R_\epsilon^*)+\textup{det}(R_\epsilon X R_\epsilon^*)=\textup{det}(R_\epsilon X R_\epsilon^*+I_2)>0
\end{equation}
If we can show that the coefficients of $1/\epsilon$ in \eqref{ver4a} and \eqref{ver4b} are positive, then \eqref{ver2} follows.

For some $b(t)\in\R$, direct computations reveal that
\[
\begin{array}{rcl}
\textup{trace}(R_\epsilon XR_\epsilon^*)&=&\textup{trace}(X Y^*\Delta_\epsilon^{-1} Y)\\
&=&\dfrac{1}{|\lambda_2-\lambda_1|^2} a(t)\dfrac{1}{\epsilon}+b(t).
\end{array}\]
where 
\[\begin{array}{rcl}
\vspace{0.2cm}
a(t)&=&\dfrac{\textup{Re}(\lambda_1)}{t_1}|\lambda_2|^2+\dfrac{\textup{Re}(\lambda_2)}{t_2}|\lambda_1|^2\\
\vspace{0.2cm}
&=&-\dfrac{|\lambda_1|^2\textup{Re}(\lambda_2)}{r_2^{(123)}t_1t_2}[r_1^{(123)}t_2-t_1r_2^{(123)}]
\end{array}\]
due to Lemma \ref{rdiff}. Since $n\geq 4$ and $t$ has all positive entries, $t\neq r^{(123)}$. Lemma \ref{oneone} guarantees $r_1^{(123)}t_2-t_1r_2^{(123)}>0$, and hence $a(t)>0$ due to assumption (A2).

Similarly, there exists $d(t)\in \R$ such that
\[\begin{array}{rcl}
\textup{det}(R_\epsilon X R_\epsilon^*+I_2)
&=&\textup{det}(XY^*\Delta_\epsilon^{-1} Y+I_2)\\
&=&\dfrac{1}{|\lambda_2-\lambda_1|^2}c(t)\dfrac{1}{\epsilon}+d(t)
\end{array}\]
where 
\[
\begin{array}{rcl}c(t)&=&
\dfrac{\textup{Re}(\lambda_1)}{t_1}|\lambda_2|^2+\dfrac{\textup{Re}(\lambda_2)}{t_2}|\lambda_1|^2+\dfrac{\textup{Re}(\lambda_1)\textup{Re}(\lambda_2)}{t_1t_2} \left[\displaystyle\sum_{j=4}^n\dfrac{t_j}{\textup{Re}(\lambda_j)}|\lambda_j|^2\right]\\
&=&\dfrac{\textup{Re}(\lambda_1)\textup{Re}(\lambda_2)}{t_1t_2}\left[\displaystyle\sum_{j\neq 3}^n\dfrac{t_j}{\textup{Re}(\lambda_j)}|\lambda_j|^2\right].
\end{array}\]

Since $\textup{Re}(\lambda_1)>0$ and $\textup{Re}(\lambda_2)<0$, the quantity $c(t)>0$ if and only if 
\[\displaystyle\sum_{j\neq 3}^n\dfrac{t_j}{\textup{Re}(\lambda_j)}|\lambda_j|^2<0.\] 
This last inequality holds due to Lemma \ref{xt}.\end{proof}

\begin{lemma}\label{keylemma2}
Let $x_1,\ldots,x_m>0$ and $z_1,\ldots,z_m\in \C$. Then \[\dfrac{\left|\displaystyle\sum_{j=1}^mz_j\right|^2}{\displaystyle\sum_{j=1}^mx_j}\leq\displaystyle\sum_{j=1}^m\dfrac{|z_j|^2}{x_j}.\]
\end{lemma}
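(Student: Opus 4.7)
The plan is to recognize this as the \emph{Engel form} (a.k.a.\ Titu's inequality) of Cauchy--Schwarz and deduce it in one line from the standard Cauchy--Schwarz inequality. Concretely, I would introduce the vectors $a, b \in \mathbb{C}^m$ defined by
\[
a_j := \frac{z_j}{\sqrt{x_j}}, \qquad b_j := \sqrt{x_j}, \qquad j = 1, \ldots, m,
\]
which is permissible because the hypothesis $x_j > 0$ guarantees each $\sqrt{x_j}$ is a positive real number. Then $a_j \overline{b_j} = z_j$, so the numerator on the left is $|\langle a, b\rangle|^2$, while the factors on the right are $\|a\|^2 = \sum_j |z_j|^2/x_j$ and $\|b\|^2 = \sum_j x_j$.

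Applying the Cauchy--Schwarz inequality $|\langle a,b\rangle|^2 \leq \|a\|^2 \|b\|^2$ and dividing by the strictly positive quantity $\sum_j x_j$ yields the claimed bound immediately. Nothing else is needed; there is no genuine obstacle, since the only subtle point is simply noticing the correct substitution that makes the products $a_j \overline{b_j}$ collapse to $z_j$.

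If a more self-contained derivation were preferred, I would instead argue by induction on $m$. The base case $m = 2$ reduces, after clearing denominators, to the inequality
\[
x_1^2 |z_2|^2 + x_2^2 |z_1|^2 - 2 x_1 x_2 \operatorname{Re}(z_1 \overline{z_2}) \;=\; |x_1 z_2 - x_2 z_1|^2 \;\geq\; 0,
\]
which is obvious. The inductive step groups the first $m-1$ summands into a single pair $\bigl(\sum_{j<m} z_j,\,\sum_{j<m} x_j\bigr)$, applies the induction hypothesis to bound $|\sum_{j<m} z_j|^2/\sum_{j<m} x_j$, and then uses the two-term case to combine with $(z_m, x_m)$. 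Either route gives the result, but the Cauchy--Schwarz substitution is the cleanest and would be my preferred write-up.
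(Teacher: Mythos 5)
Your Cauchy--Schwarz substitution $a_j = z_j/\sqrt{x_j}$, $b_j = \sqrt{x_j}$ is exactly the paper's proof (which sets $x = [\sqrt{x_1}\ \cdots\ \sqrt{x_m}]^T$ and $y = [z_1/\sqrt{x_1}\ \cdots\ z_m/\sqrt{x_m}]^T$ and applies Cauchy--Schwarz). Correct, and same approach.
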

\begin{proof}
Let $x=[\sqrt{x_1}\ \cdots\ \sqrt{x_m}]^T\in \mathbb C^m$ and $y=[z_1/\sqrt{x_1}\ \cdots\ z_m/\sqrt{x_m}]^T\in \mathbb C^m$. By the Cauchy-Schwarz inequality,
\[\left|\displaystyle\sum_{j=1}^mz_j\right|^2=|\langle x,y\rangle|^2\leq \left( \displaystyle\sum_{j=1}^mx_j\right)\left(\displaystyle\sum_{j=1}^m\dfrac{|z_j|^2}{x_j}\right).\]
\end{proof}

\begin{lemma}\label{dec28l1}
Let $A\in\C^{5\times 5}$ be normal satisfying (A1)-(A2) and $\mu_1\in W(A)$ satisfying (A3)-(A4). Assume that $\lambda_4$ is on the line through $[\lambda_2,\lambda_3]$ (possibly, $\lambda_4=\lambda_3$) and $\lambda_5$ is on the line through $[\lambda_2,\lambda_1]$ (possibly, $\lambda_5=\lambda_1$). If $\mu_1\in \textup{conv}\{\lambda_2,\lambda_4,\lambda_5\}$, then $w_{245}\in \textup{conv}[\{w_{123}\}\cup (\sigma(A)\ominus\{\lambda_2\})]$.
\end{lemma}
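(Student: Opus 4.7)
The plan rests on a key geometric observation. Since $\lambda_4$ lies on the line through $[\lambda_2,\lambda_3]$ and $\lambda_5$ lies on the line through $[\lambda_2,\lambda_1]$, the triangles $\textup{conv}\{\lambda_1,\lambda_2,\lambda_3\}$ and $\textup{conv}\{\lambda_2,\lambda_4,\lambda_5\}$ share both the vertex $\lambda_2$ and the pair of lines supporting the sides at $\lambda_2$; hence they share the same angle bisector at $\lambda_2$. The reflection of the cevian $[\lambda_2,\mu_1]$ about this bisector is therefore the same line $\ell$ for both triangles, and by the definition of isogonal conjugate, both $w_{123}$ and $w_{245}$ lie on $\ell$. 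Consequently $\lambda_2$, $w_{123}$, and $w_{245}$ are collinear, reducing the problem to locating $w_{245}$ on $\ell$ relative to $w_{123}$.

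To make this location explicit, I would parametrize $\lambda_4=\beta\lambda_2$ (with $\beta<0$, from (A2) together with $\lambda_3=0$) and $\lambda_5=(1-\gamma)\lambda_2+\gamma\lambda_1$ (with $\gamma>0$). The convex weights of $\mu_1$ in the two triangles are then related by $s_5=r_1^{(123)}/\gamma$, $s_4=r_3^{(123)}/(1-\beta)$, and $s_2=1-s_4-s_5$. Substituting into Proposition \ref{isogformula1} and expanding the numerators and denominators of $w_{123}$ and $w_{245}$ in the basis $\{\lambda_2,\lambda_1-\lambda_2\}$ of $\mathbb C$, a direct calculation yields $w_{245}-\lambda_2=\tau(w_{123}-\lambda_2)$ for an explicit positive scalar $\tau$ built from $\beta$, $\gamma$, and the weights, reconfirming the collinearity algebraically. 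I would then introduce the point $p$ where $\ell$ meets the segment $[\lambda_1,\lambda_3]\subseteq\textup{conv}\{\lambda_1,\lambda_3,\lambda_4,\lambda_5\}$, expressible as a convex combination of $\lambda_1$ and $\lambda_3=0$, and let $\tau^*$ be the parameter of $p$ on $\ell$.

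The desired containment $w_{245}\in\textup{conv}[\{w_{123}\}\cup(\sigma(A)\ominus\{\lambda_2\})]$ will follow once $w_{245}\in[w_{123},p]$, i.e.\ $1\leq\tau\leq\tau^*$. With the formulas in hand, each of these inequalities reduces to a Cauchy-Schwarz-type bound on $|\lambda_5-\lambda_4|^2/s_2$, which I would derive from Lemma \ref{keylemma2} applied to a three-term decomposition such as $\lambda_5-\lambda_4=-\beta\lambda_2+\lambda_1+(\gamma-1)(\lambda_1-\lambda_2)$ with carefully chosen positive weights summing to $s_2$. The main obstacle is precisely this matching step: selecting the decomposition and weights so that Lemma \ref{keylemma2} yields exactly the desired bound, rather than a loose one. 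I also expect a subcase split on whether $\gamma\leq 1$ (so that $\lambda_5\in[\lambda_2,\lambda_1]$) or $\gamma>1$ (so that $\lambda_5$ lies beyond $\lambda_1$), since the sign of $\gamma-1$ controls which terms in the Cauchy-Schwarz expansion carry the required sign and may force a different decomposition in each case.
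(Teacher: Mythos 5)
Your collinearity observation is correct and attractive: since $\lambda_4$, $\lambda_5$ lie on the rays from $\lambda_2$ through $\lambda_3$, $\lambda_1$ respectively, the two triangles share the same angle bisector at $\lambda_2$, hence the same reflected cevian $\ell$, hence $\lambda_2$, $w_{123}$, $w_{245}$ are collinear. (The paper never exploits this; it works directly with the real-part inequality $\textup{Re}(cw_{245})\geq 0$, which under (A4) and $\lambda_3=0$ is the half-plane condition $w_{245}\in\overline{\textup{RHS}[\lambda_3,w_{123}]}$.) However, your proposed target inequality $\tau\leq\tau^*$, where $\tau^*$ locates the intersection $p=\ell\cap[\lambda_1,\lambda_3]$, is false. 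Pushing $\lambda_4,\lambda_5$ far out along their rays makes $\textup{conv}\{\lambda_2,\lambda_4,\lambda_5\}$ arbitrarily large while $p$ stays fixed, and $w_{245}$ moves past $p$. For instance $\lambda_2=-1$, $\lambda_3=0$, $\lambda_1=1+2i$, $\mu_1=-0.4+0.2i$ gives $w_{123}=0.5i$ (so $\textup{Re}(w_{123})=0$) and $\tau^*=4/3$; taking $\lambda_4=9$, $\lambda_5=19+20i$ (i.e.\ $a=b=10$) one computes $w_{245}=12.1+6.55i$ and $\tau=13.1\gg\tau^*$, yet the lemma's conclusion still holds because $w_{245}$ lands in $\textup{conv}\{\lambda_1,\lambda_3,\lambda_4,\lambda_5\}$.

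The fix is to aim for the correct upper endpoint: take $p'$ to be the exit point of $\ell$ through $[\lambda_4,\lambda_5]$. Then $[w_{123},p']\subseteq\textup{conv}\{w_{123},\lambda_1,\lambda_3\}\cup\textup{conv}\{\lambda_1,\lambda_3,\lambda_4,\lambda_5\}\subseteq\textup{conv}[\{w_{123}\}\cup(\sigma(A)\ominus\{\lambda_2\})]$, and $\tau\leq\tau'$ (the parameter of $p'$) is automatic since $w_{245}\in\textup{conv}\{\lambda_2,\lambda_4,\lambda_5\}$. So the only substantive inequality is $\tau\geq 1$, which is precisely what the paper establishes. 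Concretely, the paper parametrizes $\lambda_4=(1-a)\lambda_2+a\lambda_3$, $\lambda_5=(1-b)\lambda_2+b\lambda_1$ with $a,b\geq 1$ (note: not $\gamma>0$ as you wrote — under (A1) the eigenvalue $\lambda_1$ cannot be interior to $W(A)$, forcing $b\geq 1$, so the subcase split you anticipate on the sign of $\gamma-1$ does not arise), equates weights via $r_1^{(123)}=br_5^{(245)}$, $r_3^{(123)}=ar_4^{(245)}$, and applies Lemma \ref{keylemma2} to the three-term decomposition $\lambda_5-\lambda_4=ab(\lambda_1-\lambda_3)+b(a-1)(\lambda_2-\lambda_1)+a(b-1)(\lambda_3-\lambda_2)$ with weights $r_2^{(123)}$, $(a-1)r_4^{(245)}$, $(b-1)r_5^{(245)}$ summing to $r_2^{(245)}$; this bounds the $\textup{Re}(\lambda_2)$-coefficient exactly so that $\textup{Re}(cw_{245})\geq a^2b^2c'\textup{Re}(w_{123})=0$. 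Your plan also omits the boundary case $\mu_1\in(\lambda_2,\lambda_4)\cup(\lambda_2,\lambda_5)$, which the paper disposes of separately ($w_{245}$ degenerates to $\lambda_5$ or $\lambda_4$).
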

\begin{proof}
We only prove that $w_{245}\in \overline{\textup{RHS}[\lambda_3,w_{123}]}$. The proof for the inclusion $w_{245}\in \overline{\textup{LHS}[\lambda_1,w_{123}]}$ is similar.

If $\mu_1\in (\lambda_2,\lambda_4)\cup(\lambda_2,\lambda_5)$, then $w_{245}=\lambda_5$ or $w_{245}=\lambda_4$, and hence the assertion holds in either case. Assume $\mu_1\in \textup{conv}\{\lambda_2,\lambda_4,\lambda_5\}$ is an interior point. By Proposition \ref{isogformula1}, there exists $c>0$ such that
\[cw_{245}=\dfrac{|\lambda_5-\lambda_4|^2}{r_2^{(245)}}\lambda_2+\dfrac{|\lambda_2-\lambda_5|^2}{r_4^{(245)}}\lambda_4+\dfrac{|\lambda_4-\lambda_2|^2}{r_5^{(245)}}\lambda_5.\]
It suffices to verify that $\textup{Re}(cw_{245})\geq 0$. 

Now, there exist $a,b\geq 1$ such that $\lambda_4=(1-a)\lambda_2+a\lambda_3$ and $\lambda_5=(1-b)\lambda_2+b\lambda_1$. Since $\lambda_1,\lambda_2,\lambda_3$ do not lie on the same line, we can equate the convex weights in the following expression:
\begin{equation}\label{weight1}
\begin{array}{rcl}
\mu_1&=&r_2^{(245)}\lambda_2+r_4^{(245)}\lambda_4+r_5^{(245)}\lambda_5\\
&=&(br_5^{(245)})\lambda_1+[r_2^{(245)}+(1-a)r_4^{(245)}+(1-b)r_5^{(245)}]\lambda_2+(ar_4^{(245)})\lambda_3\\
&=&r_1^{(123)}\lambda_1+r_2^{(123)}\lambda_2+r_3^{(123)}\lambda_3.\end{array}
\end{equation}
By \eqref{weight1} and direct computations, we obtain  \[\begin{array}{rcl}\textup{Re}(cw_{245})&=&a^2b\dfrac{|\lambda_3-\lambda_2|^2}{r_5^{(245)}}\textup{Re}(\lambda_1)+d\textup{Re}(\lambda_2)+ab^2\dfrac{|\lambda_2-\lambda_1|^2}{r_4^{(245)}}\textup{Re}(\lambda_3)\\
&=&a^2b^2\dfrac{|\lambda_3-\lambda_2|^2}{r_1^{(123)}}\textup{Re}(\lambda_1)+d\textup{Re}(\lambda_2)+a^2b^2\dfrac{|\lambda_2-\lambda_1|^2}{r_3^{(123)}}\textup{Re}(\lambda_3)
\end{array}\]
where \[d=\dfrac{|\lambda_5-\lambda_4|^2}{r_2^{(245)}}-\dfrac{b^2(a-1)|\lambda_2-\lambda_1|^2}{r_4^{(245)}}-\dfrac{a^2(b-1)|\lambda_3-\lambda_2|^2}{r_5^{(245)}}.\] Using the identity
\[\lambda_5-\lambda_4=ab(\lambda_1-\lambda_3)+b(a-1)(\lambda_2-\lambda_1)+a(b-1)(\lambda_3-\lambda_2),\] 
 Lemma \ref{keylemma2}, assumption (A2), and \eqref{weight1}, we have that
\[d\textup{Re}(\lambda_2)\geq a^2b^2\dfrac{|\lambda_3-\lambda_1|^2}{r_2^{(123)}}\textup{Re}(\lambda_2)\]
Thus,
\[\textup{Re}(cw_{245})\geq a^2b^2c'\textup{Re}(w_{123})=0\]
for some $c'>0$ due to Proposition \ref{isogformula1} and assumption (A4).
\end{proof}

\begin{lemma}\label{dec28l2}
Let $A\in\C^{5\times 5}$ be normal satisfying (A1)-(A2) and $\mu_1\in W(A)$ satisfying (A3)-(A4). Assume that $\lambda_4,\lambda_5\in[\lambda_3,\lambda_1]$and $\lambda_2,\lambda_4,\lambda_5$ do not lie on the same line. If $\mu_1\in \textup{conv}\{\lambda_2,\lambda_4,\lambda_5\}$, then $\textup{conv}\{w_{245},\lambda_4,\lambda_5\}\subseteq \textup{conv}[\{w_{123}\}\cup (\sigma(A)\ominus\{\lambda_2\})]=\textup{conv}\{w_{123},\lambda_1,\lambda_3\}$.
\end{lemma}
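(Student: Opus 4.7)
The plan is to reduce the containment $\textup{conv}\{w_{245}, \lambda_4, \lambda_5\} \subseteq \textup{conv}\{w_{123}, \lambda_1, \lambda_3\}$ to the single statement that $w_{245}$ lies in the target triangle $\textup{conv}\{w_{123}, \lambda_1, \lambda_3\}$. Since $\lambda_4, \lambda_5 \in [\lambda_3, \lambda_1]$, these two points already lie on an edge of the target triangle, and convexity then finishes the job. Membership of $w_{245}$ in $\textup{conv}\{w_{123}, \lambda_1, \lambda_3\}$ is in turn equivalent to three half-plane conditions: (i) $w_{245}$ lies on the same side of $[\lambda_3, \lambda_1]$ as $\lambda_2$, (ii) $w_{245} \in \overline{\textup{RHS}[\lambda_3, w_{123}]}$, which under (A2) and (A4) is just $\textup{Re}(w_{245}) \geq 0$, and (iii) $w_{245} \in \overline{\textup{LHS}[\lambda_1, w_{123}]}$. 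Condition (i) is automatic since the isogonal conjugate of an interior point of a triangle is again an interior point, forcing $w_{245}$ into the interior of $\textup{conv}\{\lambda_2, \lambda_4, \lambda_5\}$. Condition (iii) I would handle by a symmetric argument obtained by swapping the roles of $\lambda_1$ and $\lambda_3$ (renormalizing so that $\lambda_1 = 0$ plays the role formerly played by $\lambda_3$), so the core of the work is in (ii).

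For (ii) I would mimic the proof of Lemma \ref{dec28l1}. Using $\lambda_3 = 0$ and $\lambda_4, \lambda_5 \in [\lambda_3, \lambda_1]$, write $\lambda_4 = a \lambda_1$ and $\lambda_5 = b \lambda_1$ for some $a, b \in [0, 1]$ with $a \neq b$. Equating the two convex representations of $\mu_1$---through $\{\lambda_2, \lambda_4, \lambda_5\}$ and through $\{\lambda_1, \lambda_2, \lambda_3\}$---and using uniqueness of the convex weights (which holds because $\lambda_1, \lambda_2, \lambda_3$ are non-collinear) gives the relations $r_1^{(123)} = a r_4^{(245)} + b r_5^{(245)}$ and $r_2^{(123)} = r_2^{(245)}$. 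Applying Proposition \ref{isogformula1} to $w_{245}$ under these parametrizations, then invoking assumption (A4) together with Proposition \ref{isogformula1} for $w_{123}$ (which yields $\textup{Re}(\lambda_1) = -(r_1^{(123)}/r_2^{(123)})(|\lambda_1|^2/|\lambda_2|^2)\textup{Re}(\lambda_2)$), reduces the target $\textup{Re}(w_{245}) \geq 0$ to the scalar inequality
\[(a - b)^2 |\lambda_2|^2 \leq \left( a r_4^{(245)} + b r_5^{(245)} \right) \left( \frac{a |\lambda_2 - \lambda_5|^2}{r_4^{(245)}} + \frac{b |\lambda_4 - \lambda_2|^2}{r_5^{(245)}} \right).\]

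The main obstacle will be establishing this last inequality. The key observation is the identity $a(\lambda_2 - \lambda_5) + b(\lambda_4 - \lambda_2) = (a - b)\lambda_2$, which follows from $a\lambda_5 = ab\lambda_1 = b\lambda_4$. With this in hand, Lemma \ref{keylemma2} applied to $z_1 = a(\lambda_2 - \lambda_5)$ and $z_2 = b(\lambda_4 - \lambda_2)$ with weights $x_1 = a r_4^{(245)}$ and $x_2 = b r_5^{(245)}$ delivers exactly the required bound. This is the same Cauchy--Schwarz mechanism used in Lemma \ref{dec28l1}, but with a decomposition tuned to the present configuration, in which $\lambda_4$ and $\lambda_5$ sit on the side of $\textup{conv}\{\lambda_1, \lambda_2, \lambda_3\}$ opposite $\lambda_2$ rather than along the other two sides.
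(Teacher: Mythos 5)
Your argument is correct and reaches the same conclusion via the same fundamental tool (Lemma \ref{keylemma2}), but the algebraic route is genuinely different from the paper's. The paper keeps the expression $\textup{Re}(cw_{245})$ in the symmetric form $d_1\textup{Re}(\lambda_1)+(a-b)^2\tfrac{|\lambda_1-\lambda_3|^2}{r_2^{(123)}}\textup{Re}(\lambda_2)+d_3\textup{Re}(\lambda_3)$, bounds $d_1$ and $d_3$ separately below by \emph{two} applications of Lemma \ref{keylemma2} (using two different splittings of $\lambda_2-\lambda_5$ and $\lambda_4-\lambda_2$), and then recognizes the resulting lower bound as $(a-b)^2 c'\,\textup{Re}(w_{123})=0$. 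You instead exploit the normalization $\lambda_3=0$ and Lemma \ref{rdiff}(i) early to eliminate $\textup{Re}(\lambda_1)$ entirely, pulling out a common factor of $\textup{Re}(\lambda_2)<0$ so that the whole claim collapses to the single scalar inequality
\[
(a-b)^2|\lambda_2|^2\le\bigl(ar_4^{(245)}+br_5^{(245)}\bigr)\left(\frac{a|\lambda_2-\lambda_5|^2}{r_4^{(245)}}+\frac{b|\lambda_4-\lambda_2|^2}{r_5^{(245)}}\right),
\]
which you dispatch by \emph{one} application of Lemma \ref{keylemma2} via the clean identity $a(\lambda_2-\lambda_5)+b(\lambda_4-\lambda_2)=(a-b)\lambda_2$ (which holds because $a\lambda_5=b\lambda_4=ab\lambda_1$). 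This is tighter and arguably more transparent: it makes explicit that the whole inequality is a single Cauchy--Schwarz statement, and it avoids the paper's need to compare coefficient-by-coefficient against the formula for $\textup{Re}(w_{123})$.

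Two small edge cases deserve a sentence each before the Cauchy--Schwarz step, because they produce divisions by zero in your setup: if $\mu_1\in(\lambda_2,\lambda_4)\cup(\lambda_2,\lambda_5)$ then $r_4^{(245)}$ or $r_5^{(245)}$ vanishes, but then $w_{245}\in\{\lambda_4,\lambda_5\}\subseteq[\lambda_3,\lambda_1]$ and the claim is immediate; and if $a=0$ (i.e.\ $\lambda_4=\lambda_3$, a case the paper treats separately by a rotation argument) then $x_1=0$ in Lemma \ref{keylemma2}, but substituting $a=0$ directly shows the target inequality holds with equality. With those two remarks added, your proof is complete.
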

\begin{proof}
We only prove that $w_{245}\in \overline{\textup{RHS}[\lambda_3,w_{123}]}$. The proof for the inclusion $w_{245}\in \overline{\textup{LHS}[\lambda_1,w_{123}]}$ is similar.

If $\lambda_4=\lambda_3$, then $w_{245}$ is on the line obtained by rotating $[\lambda_2,\lambda_3]$ clockwise about $\lambda_3$ at an angle of $\angle(\lambda_5,\lambda_4,\lambda_2)-\angle(\mu_1,\lambda_4,\lambda_2)=\angle(\lambda_1,\lambda_3,\lambda_2)-\angle(\mu_1,\lambda_3,\lambda_2)$. The assertion follows. Assume $\lambda_4\neq \lambda_3$. If $\mu_1\in (\lambda_2,\lambda_4)\cup(\lambda_2,\lambda_5)$, then $w_{245}=\lambda_5$ or $w_{245}=\lambda_4$, and hence the assertion holds in either case. Assume $\mu_1\in \textup{conv}\{\lambda_2,\lambda_4,\lambda_5\}$ is an interior point. By Proposition \ref{isogformula1}, there exists $c>0$ such that
\[cw_{245}=\dfrac{|\lambda_5-\lambda_4|^2}{r_2^{(245)}}\lambda_2+\dfrac{|\lambda_2-\lambda_5|^2}{r_4^{(245)}}\lambda_4+\dfrac{|\lambda_4-\lambda_2|^2}{r_5^{(245)}}\lambda_5.\]
It suffices to verify that $\textup{Re}(cw_{245})\geq 0$. 

Now, there exist $a,b\in [0,1)$ such that $\lambda_4=(1-a)\lambda_1+a\lambda_3$ and $\lambda_5=(1-b)\lambda_1+b\lambda_3$. Since $\lambda_1,\lambda_2,\lambda_3$ do not lie on the same line, we can equate the convex weights in the following expression:
\begin{equation}\label{weight2}
\begin{array}{rcl}
\mu_1&=&r_2^{(245)}\lambda_2+r_4^{(245)}\lambda_4+r_5^{(245)}\lambda_5\\
&=&[(1-a)r_4^{(245)}+(1-b)r_5^{(245)}]\lambda_1+r_2^{(245)}\lambda_2+[ar_4^{(245)}+br_5^{(245)}]\lambda_3\\
&=&r_1^{(123)}\lambda_1+r_2^{(123)}\lambda_2+r_3^{(123)}\lambda_3.\end{array}
\end{equation}
By \eqref{weight2} and direct computations, we obtain \[\textup{Re}(cw_{245})=d_1\textup{Re}(\lambda_1)+(a-b)^2\dfrac{|\lambda_1-\lambda_3|^2}{r_2^{(123)}}\textup{Re}(\lambda_2)+d_3\textup{Re}(\lambda_3)\]
where 
\[d_1=\dfrac{|\lambda_2-(1-b)\lambda_1-b\lambda_3|^2(1-a)}{r_4^{(245)}}+\dfrac{|(1-a)\lambda_1+a\lambda_3-\lambda_2|^2(1-b)}{r_5^{(245)}}\]
and
\[d_3=\dfrac{|\lambda_2-(1-b)\lambda_1-b\lambda_3|^2a}{r_4^{(245)}}+\dfrac{|(1-a)\lambda_1+a\lambda_3-\lambda_2|^2b}{r_5^{(245)}}.\]
Using the identities
\[\lambda_2-(1-b)\lambda_1-b\lambda_3=\lambda_2-\lambda_3+(1-b)(\lambda_3-\lambda_1)=\lambda_2-\lambda_1+b(\lambda_1-\lambda_3)\] and
\[(1-a)\lambda_1+a\lambda_3-\lambda_2=(1-a)(\lambda_1-\lambda_3)+\lambda_3-\lambda_2=a(\lambda_3-\lambda_1)+\lambda_1-\lambda_2,\] 
 Lemma \ref{keylemma2}, assumption (A2), and \eqref{weight2}, we have that
\[d_1\textup{Re}(\lambda_1)\geq (a-b)^2\dfrac{|\lambda_3-\lambda_2|^2}{r_1^{(123)}}\textup{Re}(\lambda_1)\]
and 
\[d_3\textup{Re}(\lambda_3)\geq (a-b)^2\dfrac{|\lambda_2-\lambda_1|^2}{r_3^{(123)}}\textup{Re}(\lambda_3).\]
Thus,
\[\textup{Re}(cw_{245})\geq (a-b)^2c'\textup{Re}(w_{123})=0\]
for some $c'>0$ due to Proposition \ref{isogformula1} and assumption (A4).
\end{proof}

\begin{proposition}\label{dec28l3}
Let $A\in\C^{n\times n}$ be normal satisfying (A1)-(A2) and $\mu_1\in W(A)$ satisfying (A3)-(A4). If $\lambda_2,\lambda_b,\lambda_c$ do not lie on the same line such that $3\leq b<c\leq n$ and $\mu_1\in \textup{conv}\{\lambda_2,\lambda_b,\lambda_c\}$, then $w_{2bc}\in \textup{conv}[\{w_{123}\}\cup (\sigma(A)\ominus\{\lambda_2\})]$.
\end{proposition}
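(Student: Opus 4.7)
The plan is to follow the template of Lemmas \ref{dec28l1} and \ref{dec28l2}: use Proposition \ref{isogformula1} to express, for some $c>0$,
\[
cw_{2bc}=\dfrac{|\lambda_c-\lambda_b|^2}{r_2^{(2bc)}}\lambda_2+\dfrac{|\lambda_2-\lambda_c|^2}{r_b^{(2bc)}}\lambda_b+\dfrac{|\lambda_b-\lambda_2|^2}{r_c^{(2bc)}}\lambda_c,
\]
and reduce the desired containment to proving $\textup{Re}(cw_{2bc})\geq 0$, which will give $w_{2bc}\in\overline{\textup{RHS}[\lambda_3,w_{123}]}$; the symmetric inclusion $w_{2bc}\in\overline{\textup{LHS}[\lambda_1,w_{123}]}$ will follow by an analogous computation, and together these will yield the stated convex-hull containment.

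First I would dispose of degenerate cases. If $\mu_1\in[\lambda_2,\lambda_b]\cup[\lambda_2,\lambda_c]$, then $w_{2bc}$ collapses to $\lambda_c$ or $\lambda_b$, each of which already lies in $\sigma(A)\ominus\{\lambda_2\}$. Otherwise $\mu_1$ is interior to $\textup{conv}\{\lambda_2,\lambda_b,\lambda_c\}$. Because $\lambda_b,\lambda_c\in\partial W(A)$ sit in the wedge emanating from $\lambda_2$ and bounded by the rays through $\lambda_1$ and $\lambda_3$, there exist $\alpha_b,\alpha_c\geq 1$ and $\beta_b,\beta_c\in[0,1]$ with
\[
\lambda_b=(1-\alpha_b)\lambda_2+\alpha_b\beta_b\lambda_1+\alpha_b(1-\beta_b)\lambda_3,\qquad \lambda_c=(1-\alpha_c)\lambda_2+\alpha_c\beta_c\lambda_1+\alpha_c(1-\beta_c)\lambda_3.
\]
The boundary instances recover precisely the hypotheses of Lemma \ref{dec28l1} (when $\beta_b,\beta_c\in\{0,1\}$, so $\lambda_b,\lambda_c$ sit on the lines through $[\lambda_2,\lambda_3]$ and $[\lambda_2,\lambda_1]$) and Lemma \ref{dec28l2} (when $\alpha_b=\alpha_c=1$, so $\lambda_b,\lambda_c\in[\lambda_1,\lambda_3]$), which can be invoked directly.

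Next I would equate $\mu_1=r_2^{(2bc)}\lambda_2+r_b^{(2bc)}\lambda_b+r_c^{(2bc)}\lambda_c=r_1^{(123)}\lambda_1+r_2^{(123)}\lambda_2+r_3^{(123)}\lambda_3$. Since $\lambda_1,\lambda_2,\lambda_3$ are affinely independent, substituting the decompositions above and matching coefficients gives three linear identities relating $r_1^{(123)},r_2^{(123)},r_3^{(123)}$ to $r_2^{(2bc)},r_b^{(2bc)},r_c^{(2bc)}$ and to $\alpha_b,\alpha_c,\beta_b,\beta_c$. Plugging the decompositions into $cw_{2bc}$ and taking real parts (with $\textup{Re}(\lambda_3)=0$ from (A2)) turns the goal into $\textup{Re}(cw_{2bc})=A\,\textup{Re}(\lambda_1)+B\,\textup{Re}(\lambda_2)\geq 0$ for explicit $A,B$ in those parameters. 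Assumption (A4) together with Proposition \ref{isogformula1} supplies the pivotal identity $|\lambda_2-\lambda_3|^2\textup{Re}(\lambda_1)/r_1^{(123)}+|\lambda_3-\lambda_1|^2\textup{Re}(\lambda_2)/r_2^{(123)}=0$, against which the lower bound for $\textup{Re}(cw_{2bc})$ is to be calibrated.

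The heart of the proof is then an application of Lemma \ref{keylemma2} to suitable splittings of $\lambda_c-\lambda_b$, $\lambda_2-\lambda_c$, and $\lambda_b-\lambda_2$ as sums of multiples of $\lambda_1-\lambda_3$, $\lambda_3-\lambda_2$, and $\lambda_2-\lambda_1$; this is precisely the device driving Lemmas \ref{dec28l1} and \ref{dec28l2}. Combined with the pivotal identity, the Cauchy--Schwarz bound should produce $\textup{Re}(cw_{2bc})\geq c'\textup{Re}(cw_{123})=0$ for some $c'>0$. The main obstacle I expect is the algebraic bookkeeping: the four parameters $\alpha_b,\alpha_c,\beta_b,\beta_c$ encode the ``radial'' deformation handled by Lemma \ref{dec28l1} together with the ``collinear'' deformation handled by Lemma \ref{dec28l2} simultaneously, and locating a single splitting for which Lemma \ref{keylemma2} is tight is the delicate step. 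A practical fallback, should a unified splitting resist a clean form, is a two-stage deformation---first slide $\lambda_b$ along the ray from $\lambda_2$ onto $[\lambda_1,\lambda_3]$, then do the same for $\lambda_c$---with each stage reducing to one of the two base lemmas.
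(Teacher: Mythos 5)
Your instinct that the result should factor through Lemmas \ref{dec28l1} and \ref{dec28l2} is exactly right, but neither of the routes you describe actually closes the argument. The primary plan --- a single four-parameter computation using Lemma \ref{keylemma2} on a suitable splitting of $\lambda_c-\lambda_b$, $\lambda_2-\lambda_c$, $\lambda_b-\lambda_2$ --- is left as a plan; you flag the choice of splitting as the delicate step without supplying it, so as written this is an outline rather than a proof. The fallback has a concrete flaw: sliding $\lambda_b$ along the ray from $\lambda_2$ onto $[\lambda_1,\lambda_3]$ moves $\lambda_b$ toward $\lambda_2$ (since $\alpha_b\geq 1$), which shrinks $\textup{conv}\{\lambda_2,\lambda_b,\lambda_c\}$; after the slide the hypothesis $\mu_1\in\textup{conv}\{\lambda_2,\lambda_b,\lambda_c\}$ can fail, in which case the new isogonal conjugate is not even defined, and even when it survives you would still need a comparison between the original and deformed isogonal conjugates along the slide that neither base lemma provides.

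The paper's reduction is different and sidesteps both problems by never moving $\lambda_b$ or $\lambda_c$. After discarding the degenerate case $\mu_1\in(\lambda_2,\lambda_b)\cup(\lambda_2,\lambda_c)$, it introduces auxiliary points $\lambda_4'$ on the line through $[\lambda_2,\lambda_3]$ and $\lambda_5'$ on the line through $[\lambda_2,\lambda_1]$ chosen so that $\lambda_b,\lambda_c\in[\lambda_4',\lambda_5']$ (the line through $\lambda_b,\lambda_c$ extended outward to the two rays from $\lambda_2$). Lemma \ref{dec28l2}, applied to the configuration $\{\lambda_5',\lambda_2,\lambda_4',\lambda_b,\lambda_c\}$ with $\lambda_4',\lambda_5'$ in the roles of $\lambda_3,\lambda_1$, yields $w_{2bc}\in\textup{conv}\{w_{2bc},\lambda_b,\lambda_c\}\subseteq\textup{conv}\{w_{245}',\lambda_4',\lambda_5'\}$. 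Lemma \ref{dec28l1}, applied to $\{\lambda_1,\lambda_2,\lambda_3,\lambda_4',\lambda_5'\}$, then gives $w_{245}'\in\overline{\textup{RHS}[\lambda_3,w_{123}]}$, while $\lambda_4',\lambda_5'\in\overline{\textup{RHS}[\lambda_3,w_{123}]}$ by (A2), and convexity finishes. Because the triangle is enlarged rather than shrunk, $\mu_1$ stays interior at every stage, and the two base lemmas are invoked as black boxes instead of being redone algebraically --- which is the clean reduction your plan is reaching for but does not reach.
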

\begin{proof}
If $\mu_1\in (\lambda_2,\lambda_b)\cup(\lambda_2,\lambda_c)$, then $w_{2bc}=\lambda_c$ or $w_{2bc}=\lambda_b$, and hence the assertion holds in either case. Assume $\mu_1\in \textup{conv}\{\lambda_2,\lambda_b,\lambda_c\}$ is an interior point. There exist $\lambda_4'$ and $\lambda_5'$ along the line through $[\lambda_2,\lambda_3]$ and $[\lambda_2,\lambda_1]$ respectively such that $\lambda_b,\lambda_c\in [\lambda_4',\lambda_5']$. By Lemma \ref{dec28l2}, 
\[w_{2bc}\in \textup{conv}\{w_{2bc},\lambda_b,\lambda_c\}\subseteq\textup{conv}\{w_{245}',\lambda_4',\lambda_5'\}.\] Observe that $w_{245}'\in\overline{\textup{RHS}[\lambda_3,w_{123}]}$ due to Lemma \ref{dec28l1} while $\lambda_4',\lambda_5'\in \overline{\textup{RHS}[\lambda_3,w_{123}]}$ due to assumption (A2). The assertion follows.
\end{proof}

\begin{lemma}\label{mainlemma}
Let $A\in \C^{4\times 4}$ be normal with distinct eigenvalues $\lambda_1,\ldots,\lambda_4$ not lying on the same line and arranged in a counterclockwise orientation with respect to $\textup{trace}(A)/4$ such that no eigenvalue is in the interior of $W(A)$. Let $\mu_1\notin \partial W(A)$, $\mu_1\notin\Lambda_2(A)$, and $\mu_1\in \textup{conv}\{\lambda_1,\lambda_2,\lambda_3\}\cap\textup{conv}\{\lambda_1,\lambda_2,\lambda_4\}$. Let $w$ be the intersection of the lines through $[\lambda_1,w_{123}]$ and $[\lambda_2,w_{124}]$. Then
\[{\cal B}_A(\mu_1)\subseteq \textup{conv}\{w,w_{123},w_{124},\lambda_3,\lambda_4\},\]
and in particular, $\lambda_3, \lambda_4\in\overline{\textup{RHS}[w_{123},w_{124}]}$. Moreover, if either $\lambda_3\in (\lambda_2,\lambda_4)$ or $\lambda_4\in (\lambda_3,\lambda_1)$, then Theorem \ref{main} holds.
\end{lemma}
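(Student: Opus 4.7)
The strategy is to apply Proposition~\ref{realpartmup1} twice --- once to each of the two triangles $\textup{conv}\{\lambda_1,\lambda_2,\lambda_3\}$ and $\textup{conv}\{\lambda_1,\lambda_2,\lambda_4\}$ that both contain $\mu_1$ --- with \emph{different} choices of ``excluded vertex'' in the two applications. Intersecting the resulting half-plane constraints with $W(A)$ carves out exactly the claimed pentagon. By Proposition~\ref{givenzprop1} it suffices to bound $\mathcal{B}_\Lambda(\mu_1,t)$ for each $t\in\mathcal{C}_\Lambda(\mu_1)$; when $t$ has a zero entry (equivalently, $t$ is an extreme point of $\mathcal{C}_\Lambda(\mu_1)$ supported on three indices), Proposition~\ref{withzero} reduces $\mathcal{B}_\Lambda(\mu_1,t)$ to a segment of the form $[w_{ijk},\lambda_\ell]$ which visibly lies inside the pentagon.

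For $t$ with all four entries positive, I first rotate and translate so that (A1)-(A4) hold with respect to $\textup{conv}\{\lambda_1,\lambda_2,\lambda_3\}$ (with $\lambda_2$ as excluded vertex); the two half-plane inclusions established inside the proof of Proposition~\ref{realpartmup1} then give
\[
\mathcal{B}_\Lambda(\mu_1,t)\subseteq \overline{\textup{RHS}[\lambda_3,w_{123}]}\cap \overline{\textup{LHS}[\lambda_1,w_{123}]}.
\]
A second, independent rotation/translation is then used for $\textup{conv}\{\lambda_1,\lambda_2,\lambda_4\}$, this time relabelling so that $\lambda_1$ plays the role of the excluded vertex; a second application of Proposition~\ref{realpartmup1} yields
\[
\mathcal{B}_\Lambda(\mu_1,t)\subseteq \overline{\textup{RHS}[\lambda_4,w_{124}]}\cap \overline{\textup{LHS}[\lambda_2,w_{124}]}.
\]
Intersecting these four half-planes with $W(A)$ produces exactly the pentagon $\textup{conv}\{w,w_{123},w_{124},\lambda_3,\lambda_4\}$: the side $[\lambda_3,\lambda_4]$ comes from $\partial W(A)$; the sides $[\lambda_3,w_{123}]$ and $[\lambda_4,w_{124}]$ come from the two ``$\overline{\textup{RHS}}$'' constraints; and the two remaining sides $[w_{123},w]$ and $[w,w_{124}]$ lie on the lines through $[\lambda_1,w_{123}]$ and $[\lambda_2,w_{124}]$ respectively, meeting at $w$ by its very definition.

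The ``in particular'' claim is then immediate: the chord $[w_{123},w_{124}]$ splits the pentagon into two parts, with $\{\lambda_3,\lambda_4\}$ on the side opposite $w$ (and opposite $\mu_1,\lambda_1,\lambda_2$), giving $\lambda_3,\lambda_4\in\overline{\textup{RHS}[w_{123},w_{124}]}$. For the ``moreover'' clause, the degeneracy $\lambda_3\in(\lambda_2,\lambda_4)$ forces $\lambda_3\in\Lambda_2(A)$ via \eqref{gen1}, and inspection of definition~\eqref{region} shows that $\mathcal{R}_A(\mu_1)=\textup{conv}\{w_{123},w_{124},\lambda_3,\lambda_4\}$; the pentagon collapses to this quadrilateral (with $w$ absorbed inside), and the reverse containment $\mathcal{R}_A(\mu_1)\subseteq\textup{conv}[\mathcal{B}_A(\mu_1)]$ holds because the generating points $w_{123},w_{124},\lambda_3,\lambda_4$ all lie in $\mathcal{B}_A(\mu_1)$ (as noted right after \eqref{region}), yielding Theorem~\ref{main}. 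The symmetric case $\lambda_4\in(\lambda_3,\lambda_1)$ is analogous.

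The main obstacle is the second application of Proposition~\ref{realpartmup1}: one must verify that (A1)-(A4) can indeed be arranged in the new coordinate frame with $\lambda_1$ as the excluded vertex, the key point being $\textup{Re}(\lambda_3)>0$ in those coordinates. Geometrically this amounts to showing that $\lambda_3$ lies on the same half-plane as $\lambda_4$ with respect to the angle bisector at $\lambda_2$ inside $\textup{conv}\{\lambda_1,\lambda_2,\lambda_4\}$, which should follow from the counterclockwise arrangement of eigenvalues together with the hypothesis that no eigenvalue lies in the interior of $W(A)$, although some additional care is needed when $\lambda_3$ happens to sit on the edge $[\lambda_2,\lambda_4]$.
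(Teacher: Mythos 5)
Your approach is a genuine alternative to the paper's own proof, but it rests on an un-finished verification that is the real content of the argument. The paper proves this lemma \emph{directly} and elementarily: it sets $z=\lambda_1$ in the compound-matrix inclusion \eqref{nec2} to write $(\mu_1-\lambda_1)(\mu_2-\lambda_1)$ as a convex combination of $(\lambda_i-\lambda_1)(\lambda_j-\lambda_1)$, and then divides by the isogonal-conjugate identity $(\mu_1-\lambda_1)(w_{123}-\lambda_1)=c(\lambda_2-\lambda_1)(\lambda_3-\lambda_1)$ from \cite[Theorem~3]{rC13}; taking imaginary parts gives $\mu_2\in\overline{\textup{LHS}[\lambda_1,w_{123}]}$ immediately, and the other three half-plane inclusions are analogous (with $z=\lambda_2,\lambda_3,\lambda_4$). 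No reliance on the positive-semidefinite machinery of Section~4 is needed, which matters because Lemma~\ref{mainlemma} is then used in Proposition~\ref{A2} to \emph{justify} the normalization (B2) before that machinery is invoked for Case~4. Your plan instead invokes Proposition~\ref{realpartmup1} twice, which requires arranging (A1)--(A4) in two separate frames. You correctly flag the second arrangement as ``the main obstacle,'' but this is not a loose end one can wave at --- it is essentially the geometric content of the lemma. (A fuller account: with $\lambda_2=0$ and the line through $\lambda_2,w_{124}$ rotated to the imaginary axis, one must show $\lambda_3$ lands on the $\lambda_4$-side; this does follow because the ray $[\lambda_2,w_{124}]$ lies strictly between $[\lambda_2,\lambda_4]$ and $[\lambda_2,\lambda_1]$ while the interior angle of the $4$-gon at $\lambda_2$ is $<\pi$, but the same check is needed for the \emph{first} application as well to ensure $\textup{Re}(\lambda_4)>0$, and a separate argument is needed when $\mu_1\in(\lambda_1,x)$ or $\mu_1\in(\lambda_2,x)$, where $\mu_1$ fails to be interior to one of the two triangles and (A3) breaks.) Two further slips: the half-plane labels from the second application should read $\overline{\textup{RHS}[\lambda_2,w_{124}]}\cap\overline{\textup{LHS}[\lambda_4,w_{124}]}$ (you have the sides reversed), and you describe the separating line as ``the angle bisector at $\lambda_2$,'' but it is the isogonal cevian $[\lambda_2,w_{124}]$, not the bisector. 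Finally, in the ``moreover'' clause the pentagon does not merely absorb $w$ --- the paper shows $w$ \emph{coincides} with $w_{123}$ (resp.\ $w_{124}$), because when $\lambda_3\in(\lambda_2,\lambda_4)$ the cevian $[\lambda_2,w_{124}]$ passes through $w_{123}$ (the angle at $\lambda_2$ is the same in both triangles), and this identity is what collapses the pentagon to the quadrilateral. So: correct strategy in outline, genuinely different route, but the heart of the verification is left open.
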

\begin{proof}
Let $\mu_2\in {\cal B}_A(\mu_1)$ and $\Lambda_2(A)=\{x\}$, where $x$ is the intersection of $[\lambda_3,\lambda_1]$ and $[\lambda_2,\lambda_4]$. The proof of the assertions $\mu_2\in \overline{\textup{LHS}[\lambda_1,w_{123}]}$ and $\mu_2\in \overline{\textup{RHS}[\lambda_3,w_{123}]}$ when $\mu_1\in\textup{conv}\{\lambda_1,\lambda_2,x\}$ is an interior point is the same as when $\mu_1\in (\lambda_2,x)$ since $\mu_1$ is still an interior point of $\textup{conv}\{\lambda_1,\lambda_2,\lambda_3\}$. 
Similarly, the proof of the assertions $\mu_2\in\overline{\textup{RHS}[\lambda_2,w_{124}]}$ and $\mu_2\in \overline{\textup{LHS}[\lambda_4,w_{124}]}$ when $\mu_1\in \textup{conv}\{\lambda_1,\lambda_2,x\}$ is an interior point is the same as when $\mu_1\in (\lambda_1,x)$ since $\mu_1$ is an interior point of $\textup{conv}\{\lambda_1,\lambda_2,\lambda_4\}$. Hence, we assume $\mu_1\in\textup{conv}\{\lambda_1,\lambda_2,x\}$ is an interior point.

Since $\mu_2\in W(A)$, it is clear that $\mu_2\in \overline{\textup{LHS}[\lambda_3,\lambda_4]}$. We only prove that $\mu_2\in \overline{\textup{LHS}[\lambda_1,w_{123}]}$, that is,
\begin{equation}\label{lhs3}
\textup{Im}\left(\dfrac{\mu_2-\lambda_1}{w_{123}-\lambda_1}\right)\geq 0.
\end{equation}

\begin{center}
\begin{tikzpicture}[thick]
\coordinate[label=above:$\lambda_1$] (a) at (0.2,4);
\coordinate[label=left:$\lambda_2$] (b) at (0,0);
\coordinate[label=right:$\lambda_3$] (c) at (4,0);
\coordinate[label=right:$\lambda_4$] (d) at (4.4,3.8);
\coordinate[label=below:$\mu_1$] (z) at (0.6,0.64);
\coordinate[label=left:$w_{123}$] (w) at (1.8390,1.5590);
\coordinate[label=below:$w_{124}$] (w2) at (0.6119,3.9096);

\coordinate (zonl1l2) at (0.037296,0.74592);
\coordinate (zonl2l3) at (0.67619,0);
\coordinate (zonl3l1) at (1.986755,2.1192);
\coordinate (wonl1l2) at (0.1392617,2.785233);
\coordinate (wonl2l3) at (2.8857868,0);
\coordinate (w2onl1l2) at (0.196081,3.92162);
\coordinate (w2onl4l1) at (0.622896211,3.9798583);

\draw (a)--(b)--(c)--(d)--cycle;

\fill[purple] (a)  circle[radius=1.5pt];
\fill[purple] (b)  circle[radius=1.5pt];
\fill[purple] (c)  circle[radius=1.5pt];
\fill[purple] (d)  circle[radius=1.5pt];
\fill[green] (z)  circle[radius=1.5pt];
\fill[red] (w)  circle[radius=1.5pt];
\fill[red] (w2)  circle[radius=1.5pt];


\draw[->,orange] (c)--(wonl1l2);
\draw[fill=gray,opacity=0.2,text opacity=1] (c)--(wonl1l2)--(a)--(d)--cycle;
\draw[->,orange] (a)--(wonl2l3);
\draw[fill=gray,opacity=0.2,text opacity=1] (a)--(wonl2l3)--(c)--(d)--cycle;

\draw[->,orange] (b)--(w2onl4l1);
\draw[fill=gray,opacity=0.2,text opacity=1] (b)--(w2onl4l1)--(d)--(c)--cycle;

\draw[->,orange] (d)--(w2onl1l2);
\draw[fill=gray,opacity=0.2,text opacity=1] (d)--(w2onl1l2)--(b)--(c)--cycle;

\end{tikzpicture}
\end{center}
Analogous arguments show that $\mu_2\in\overline{\textup{RHS}[\lambda_2,w_{124}]}$, $\mu_2\in \overline{\textup{LHS}[\lambda_4,w_{124}]}$, and $\mu_2\in \overline{\textup{RHS}[\lambda_3,w_{123}]}$. By \cite[Theorem 3]{rC13}, there exists $c\geq0$ such that 
\begin{equation}\label{isog1}
c=\dfrac{(\mu_1-\lambda_1)(w_{123}-\lambda_1)}{(\lambda_2-\lambda_1)(\lambda_3-\lambda_1)}.
\end{equation}
Since $\mu_1\in \textup{conv}\{\lambda_1,\lambda_2,\lambda_3\}$ is an interior point, $c>0$.

By setting $z=\lambda_1$ in \eqref{nec2}, there exist some $p_{ij}\in [0,1]$ with $\displaystyle\sum_{i<j}p_{ij}=1$ such that \[(\mu_1-\lambda_1)(\mu_2-\lambda_1)=\displaystyle\sum_{i<j}p_{ij}(\lambda_i-\lambda_1)(\lambda_j-\lambda_1)\]
which can be simplified as
\begin{equation}\label{conv1}
p_{23}(\lambda_2-\lambda_1)(\lambda_3-\lambda_1)+p_{24}(\lambda_2-\lambda_1)(\lambda_4-\lambda_1)+p_{34}(\lambda_3-\lambda_1)(\lambda_4-\lambda_1).\end{equation}
By \eqref{isog1} and \eqref{conv1}, we obtain
\[\begin{array}{rcl}
\dfrac{\mu_2-\lambda_1}{w_{123}-\lambda_1}&=&\dfrac{1}{c}\dfrac{(\mu_1-\lambda_1)(\mu_2-\lambda_1)}{(\lambda_2-\lambda_1)(\lambda_3-\lambda_1)}\\
&=&\dfrac{1}{c}\left(p_{23}+p_{24}\dfrac{\lambda_4-\lambda_1}{\lambda_3-\lambda_1}+p_{34}\dfrac{\lambda_4-\lambda_1}{\lambda_2-\lambda_1}\right).
\end{array}\]
Taking the imaginary part gives us
\[\textup{Im}\left(\dfrac{\mu_2-\lambda_1}{w_{123}-\lambda_1}\right)=\dfrac{p_{24}}{c}\textup{Im}\left(\dfrac{\lambda_4-\lambda_1}{\lambda_3-\lambda_1}\right)+\dfrac{p_{34}}{c}\textup{Im}\left(\dfrac{\lambda_4-\lambda_1}{\lambda_2-\lambda_1}\right)\geq 0\]
due to $c$ being positive and the eigenvalues having a counterclockwise orientation. This proves \eqref{lhs3}.

Since $w_{123},w_{124}\in {\cal B}_A(\mu_1)$, the first part applies to $w_{123}, w_{124}$. Note that $\lambda_3\in \overline{\textup{RHS}[w_{123},w_{124}]}$ if and only if $w_{124}\in \overline{\textup{RHS}[\lambda_3,w_{123}]}$ which is true by the first part. Similarly, $w_{123}\in \overline{\textup{LHS}[\lambda_4,w_{124}]}$ since $\lambda_4\in \overline{\textup{RHS}[w_{123},w_{124}]}$.

Finally, if $\lambda_3\in (\lambda_2,\lambda_4)$ or $\lambda_4\in (\lambda_3,\lambda_1)$, then $w=w_{123}$ or $w=w_{124}$, respectively. Thus, $\textup{conv}[{\cal B}_A(\mu_1)]=\textup{conv}\{w_{123},w_{124},\lambda_3,\lambda_4\}={\cal R}_A(\mu_1)$.
\end{proof}

\begin{corollary}\label{dec29}
Let $A\in\C^{n\times n}$ be normal satisfying (A1)-(A2) and $\mu_1\in W(A)$ satisfying (A3)-(A4). If $\lambda_1,\lambda_2,\lambda_c$ do not lie on the same line such that $3\leq c\leq n$ and $\mu_1\in \textup{conv}\{\lambda_1,\lambda_2,\lambda_c\}$, then $w_{12c}\in \textup{conv}[\{w_{123}\}\cup (\sigma(A)\ominus\{\lambda_2\})]$.
\end{corollary}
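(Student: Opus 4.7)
The plan is to reduce Corollary \ref{dec29} to Lemma \ref{mainlemma} by applying it to the $4\times 4$ diagonal matrix $D := \textup{diag}(\lambda_1, \lambda_2, \lambda_3, \lambda_c)$. The case $c=3$ is immediate from $w_{12c}=w_{123}$, so I restrict attention to $c \geq 4$.

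First I would verify the hypotheses of Lemma \ref{mainlemma} for $(D, \mu_1)$. The four eigenvalues are distinct (by (A1)), with the required non-collinear triples (by (A3) and the corollary hypothesis), and arranged in counterclockwise order at the corners of the quadrilateral $W(D)$, none in its interior. The containment $\mu_1 \in \textup{conv}\{\lambda_1, \lambda_2, \lambda_3\} \cap \textup{conv}\{\lambda_1, \lambda_2, \lambda_c\}$ is exactly the hypothesis given. Both $\mu_1 \notin \partial W(D)$ and $\mu_1 \notin \Lambda_2(D) = [\lambda_1,\lambda_3]\cap [\lambda_2,\lambda_c]$ follow from $\mu_1$ being an interior point of $\textup{conv}\{\lambda_1, \lambda_2, \lambda_3\}$ (by (A3)): this places $\mu_1$ strictly on the $\lambda_2$-side of the diagonal $[\lambda_1, \lambda_3]$, so $\mu_1$ misses this diagonal and also the two edges $[\lambda_3,\lambda_c]$, $[\lambda_c,\lambda_1]$ of $W(D)$, which sit on the opposite side.

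Next I would read off from the proof of Lemma \ref{mainlemma}---not merely its ``in particular" clause---applied with $\lambda_4$ replaced by $\lambda_c$, that every $\mu_2 \in {\cal B}_D(\mu_1)$ satisfies $\mu_2 \in \overline{\textup{LHS}[\lambda_1, w_{123}]} \cap \overline{\textup{RHS}[\lambda_3, w_{123}]}$. Taking $\mu_2 = w_{12c}$, which lies in ${\cal B}_D(\mu_1)$ since it is the isogonal conjugate of $\mu_1$ with respect to $\textup{conv}\{\lambda_1, \lambda_2, \lambda_c\}$ (cf.\ \cite[Theorem 4]{rC13} together with Proposition \ref{withzero} to absorb the extra eigenvalue $\lambda_3$), this delivers both half-plane conditions for $w_{12c}$.

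To finish, I would identify $\textup{conv}[\{w_{123}\} \cup (\sigma(A) \ominus \{\lambda_2\})]$ with the convex polygon $\textup{conv}\{\lambda_1, w_{123}, \lambda_3, \lambda_4, \ldots, \lambda_n\}$, whose vertices are in counterclockwise order (with $w_{123}$ an extreme point because it lies on the $\lambda_2$-side of $[\lambda_1, \lambda_3]$). The only edges of this polygon not shared with $\partial W(A)$ are $[\lambda_1, w_{123}]$ and $[w_{123}, \lambda_3]$, and the two half-plane conditions above place $w_{12c}$ on the correct (interior) side of each; the remaining edges are shared with $\partial W(A)$ and are handled by $w_{12c} \in W(A)$. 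The main obstacle I anticipate is the bookkeeping of the Lemma \ref{mainlemma} hypotheses for the sub-diagonal $D$---especially $\mu_1 \notin \Lambda_2(D)$ and the avoidance of $\partial W(D)$---but both reduce cleanly to the interior condition in (A3).
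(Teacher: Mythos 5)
Your proof is correct and follows the route the paper clearly intends: the result is labelled a corollary and appears immediately after Lemma~\ref{mainlemma}, with no separate argument, and applying that lemma to the $4\times 4$ diagonal compression $D=\textup{diag}(\lambda_1,\lambda_2,\lambda_3,\lambda_c)$ is exactly the right reduction. Your verification of the hypotheses (non-collinearity, counterclockwise ordering, $\mu_1\notin\partial W(D)$ and $\mu_1\notin\Lambda_2(D)$, all following from (A1)--(A3) and the interior-point condition) is sound, the observation that $w_{12c}\in{\cal B}_D(\mu_1)$ via Carden--Hansen plus Proposition~\ref{withzero} is correct, and reading the two half-plane inclusions $\overline{\textup{LHS}[\lambda_1,w_{123}]}$ and $\overline{\textup{RHS}[\lambda_3,w_{123}]}$ directly from the proof of Lemma~\ref{mainlemma} (rather than from its ``in particular'' clause) is the right move, since the bare statement of the lemma applied to $D$ is tautological for $w_{12c}$; combining these with $w_{12c}\in W(A)$ then delivers membership in $\textup{conv}[\{w_{123}\}\cup(\sigma(A)\ominus\{\lambda_2\})]$ edge by edge as you describe.
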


\begin{proposition}\label{realpartmup1v2}
Let $A\in\C^{n\times n}$ be normal satisfying (A1)-(A2) and $\mu_1\in W(A)$ satisfying (A3)-(A4). If $t\in {\cal C}_\Lambda(\mu_1)$, then \[{\cal B}_A(\mu_1,t)\subseteq \textup{conv}[\{w_{123}\}\cup(\sigma(A)\ominus\{\lambda_2\})].\]
\end{proposition}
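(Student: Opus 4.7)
The plan is to combine the all-positive-weights case (Proposition \ref{realpartmup1}) with the dissection formula Proposition \ref{withzero} and the isogonal-conjugate containments Proposition \ref{dec28l3} and Corollary \ref{dec29}. The pivotal preliminary observation is that under assumptions (A1)--(A4), every $t \in {\cal C}_\Lambda(\mu_1)$ must satisfy $t_2>0$; without this the target set $T := \textup{conv}[\{w_{123}\} \cup (\sigma(A) \ominus \{\lambda_2\})]$ is incompatible with the $W(S)$ piece produced by Proposition \ref{withzero}, since $\lambda_2 \notin T$.

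For the $t_2>0$ claim I would argue as follows. Since $\lambda_1, \lambda_2, \lambda_3$ are consecutive counterclockwise neighbors on $\partial W(A)$ with no eigenvalue of $A$ in the interior of $W(A)$, the chord $[\lambda_1, \lambda_3]$ separates $\lambda_2$ from $\lambda_4, \ldots, \lambda_n$: the latter all lie in the closed half-plane opposite $\lambda_2$. Hence $\textup{conv}(\sigma(A) \ominus \{\lambda_2\}) = \textup{conv}\{\lambda_1, \lambda_3, \lambda_4, \ldots, \lambda_n\}$ sits inside that closed half-plane, whereas by (A3) the point $\mu_1$ lies strictly on the $\lambda_2$ side as an interior point of $\textup{conv}\{\lambda_1, \lambda_2, \lambda_3\}$. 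It follows that $\mu_1 \notin \textup{conv}(\sigma(A) \ominus \{\lambda_2\})$, which forces $t_2 > 0$ for every $t \in {\cal C}_\Lambda(\mu_1)$.

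With this in hand the argument splits by whether $t$ has all positive entries. If so, Proposition \ref{realpartmup1} delivers the conclusion. Otherwise let $J := \{j : t_j = 0\}$ (note $2 \notin J$), and set $\Lambda' := \textup{diag}(\lambda_j)_{j \notin J}$, $s := [t_j]_{j \notin J}$, $S := \{\lambda_j : j \in J\}$. Proposition \ref{withzero} gives
\[{\cal B}_\Lambda(\mu_1, t) = \textup{conv}[{\cal B}_{\Lambda'}(\mu_1, s) \cup W(S)].\]
Since $\lambda_2 \notin S$, the piece $W(S) = \textup{conv}(S)$ is already contained in $\textup{conv}(\sigma(A) \ominus \{\lambda_2\}) \subseteq T$. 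For the piece ${\cal B}_{\Lambda'}(\mu_1, s)$, I would proceed by induction on $n$: when $|\sigma(\Lambda')| = 3$ this set reduces to a single isogonal conjugate $w_L$ with $\lambda_2 \in L$, which lies in $T$ by Corollary \ref{dec29} (if $\lambda_1 \in L$) or Proposition \ref{dec28l3} (otherwise); when $|\sigma(\Lambda')| \geq 4$, applying the inductive hypothesis (or Proposition \ref{realpartmup1}) to $\Lambda'$ yields a bound of the form $\textup{conv}[\{w_L\} \cup (\sigma(\Lambda') \ominus \{\lambda_2\})]$, with $w_L \in T$ again by the same isogonal-conjugate lemmas and $\sigma(\Lambda') \ominus \{\lambda_2\} \subseteq \sigma(A) \ominus \{\lambda_2\} \subseteq T$.

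The hard part will be verifying that the inductive step is legitimate: passing to $\Lambda'$ could in principle push a previously-boundary eigenvalue of $A$ into the interior of the smaller numerical range $W(\Lambda')$, threatening the analogue of (A1) for $\Lambda'$. The base case $|\sigma(\Lambda')| \leq 3$ bypasses this entirely because it is handled directly by the isogonal-conjugate results, so the remaining delicate work is confirming that for $|\sigma(\Lambda')| \geq 4$ one can nonetheless invoke Proposition \ref{realpartmup1} on $\Lambda'$---either via careful casework on which eigenvalues are deleted, or by revisiting the positive-semidefinite estimate of Proposition \ref{realpartmup1} restricted to the smaller subspace arising from $(\Lambda', s)$.
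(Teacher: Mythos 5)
Your proposal follows the same route as the paper's own proof: induction on $n$, Proposition~\ref{withzero} to split off the $W(S)$ piece, Proposition~\ref{realpartmup1} for the all-positive case, and Proposition~\ref{dec28l3} together with Corollary~\ref{dec29} to place the isogonal conjugate $w_{abc}$ inside the target set. The one piece you add explicitly that the paper leaves implicit is the observation $t_2>0$; your half-plane argument is correct, and it is exactly what justifies the paper's unexplained claim ``$\lambda_2\in\{\lambda_a,\lambda_b,\lambda_c\}$ and $\lambda_2\notin T$'' when the induction hypothesis is applied to $\Lambda'$.

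The worry you flag at the end is partly dispelled by a one-line observation and partly a real subtlety that the paper also glosses over. On the first part: no eigenvalue of $\Lambda'$ can land in the interior of $W(\Lambda')$, because $W(\Lambda')\subseteq W(A)$ forces $\textup{int}\,W(\Lambda')\subseteq\textup{int}\,W(A)$, and every eigenvalue of $\Lambda'$ (being an eigenvalue of $A$) already lies on $\partial W(A)$, hence outside $\textup{int}\,W(A)$. So that particular failure of (A1) for $\Lambda'$ cannot occur. The genuine delicate point, which you do not raise and which the paper silently skips, is the other half of (A1): the remaining eigenvalues $\{\lambda_j:j\notin J\}$ could all become collinear, in which case there is no isogonal conjugate $w_{abc}$ and neither the induction hypothesis nor Proposition~\ref{realpartmup1} formally applies to $\Lambda'$. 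In that degenerate case one must fall back on Cauchy interlacing for the Hermitian matrix obtained by rotating the common line onto $\R$, together with the fact that $\mu_1\notin\textup{conv}[\sigma(\Lambda')\ominus\{\lambda_2\}]$ (the same $t_2>0$ observation) to show the resulting Ritz values avoid the forbidden $\lambda_2$ end of the segment and land in $T$. With that supplement both your proposal and the paper's argument are complete; without it, both leave a small gap of the same kind.
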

\begin{proof}
We prove this by induction on $n$. Let $t\in {\cal C}_\Lambda(\mu_1)$. Suppose $t$ has zero entries. Let $J:=\{j:t_j=0\}$, $S:=\{\lambda_j: j\in J\}$, and $\Lambda':=\textup{diag}(\lambda_j)_{j\notin J}$. By Proposition \ref{withzero}, 
\[{\cal B}_A(\mu_1,t)=\textup{conv}[{\cal B}_{\Lambda'}(\mu_1,s)\cup W(S)]\]
where $s=[t_j]_{j\notin J}$. By the induction hypothesis, 
\[{\cal B}_{\Lambda'}(\mu_1,s)\subseteq{\cal B}_{\Lambda'}(\mu_1)\subseteq\textup{conv}[\{w_{abc}\}\cup T]\]
where $w_{abc}$ is the isogonal conjugate of $\mu_1$ with respect to $\textup{conv}\{\lambda_a,\lambda_b,\lambda_c\}$ and $T\subseteq \sigma(A)$. Due to assumption (A3), $\lambda_2\notin T$ and $\lambda_2\in \{\lambda_a,\lambda_b,\lambda_c\}$. Hence, \[T\subseteq \textup{conv}[\{w_{123}\}\cup(\sigma(A)\ominus\{\lambda_2\})].\] Moreover, observe that $w_{abc}\in\textup{conv}[\{w_{123}\}\cup(\sigma(A)\ominus\{\lambda_2\})] $ due to Proposition \ref{dec28l3} and Corollary \ref{dec29}. The assertion follows. 

If $t$ has all positive entries, then ${\cal B}_A(\mu_1,t)\subseteq \textup{conv}[\{w_{123}\}\cup(\sigma(A)\ominus\{\lambda_2\})]$ due to Proposition \ref{realpartmup1}.
\end{proof}
\section{$\mu_1\notin \Lambda_2(A)$: ${\cal B}_A(\mu_1)\subseteq \overline{\textup{RHS}[w_{123},w_{12n}]}$}


\begin{proposition}\label{A2}
Let $A\in \C^{n\times n}$ be normal with distinct eigenvalues $\lambda_1,\ldots,\lambda_n$ not lying on the same line and arranged in a counterclockwise orientation with respect to $\textup{trace}(A)/n$ such that no eigenvalue is in the interior of $W(A)$. Suppose $\textup{conv}\{\lambda_1,\lambda_2,\lambda_3,\lambda_n\}$ determines a $4$-gon. If $\mu_1\notin\partial W(A)$, $\mu_1\notin\Lambda_2(A)$, and $\mu_1\in \textup{conv}\{\lambda_1,\lambda_2,\lambda_3\}\cap\textup{conv}\{\lambda_1,\lambda_2,\lambda_n\}$, then $\lambda_1,\lambda_2\in\overline{\textup{LHS}[w_{123},w_{12n}]}$ and $\lambda_j\in \overline{\textup{RHS}[w_{123},w_{12n}]}$ for all $\lambda_j\neq \lambda_1,\lambda_2$.
\end{proposition}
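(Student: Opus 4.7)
The plan is to reduce Proposition \ref{A2} to the $4\times 4$ setting of Lemma \ref{mainlemma} and then extend to all remaining eigenvalues by a separating-line argument on the convex polygon $W(A)$.

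First, I would restrict attention to the $4\times 4$ normal diagonal matrix $B:=\textup{diag}(\lambda_1,\lambda_2,\lambda_3,\lambda_n)$. Its four eigenvalues inherit a counterclockwise orientation from $A$, are distinct, and by hypothesis form a nondegenerate $4$-gon. To invoke Lemma \ref{mainlemma} on $B$ with this $\mu_1$, I need $\mu_1\notin\partial W(B)$ and $\mu_1\notin\Lambda_2(B)$. The former holds because three of the four edges of $W(B)$, namely $[\lambda_1,\lambda_2]$, $[\lambda_2,\lambda_3]$, $[\lambda_n,\lambda_1]$, are also edges of $W(A)$ (so cannot contain $\mu_1$, since $\mu_1\notin\partial W(A)$), while the fourth edge $[\lambda_3,\lambda_n]$ lies on the side of the $4$-gon opposite to the region $\textup{conv}\{\lambda_1,\lambda_2,\lambda_3\}\cap\textup{conv}\{\lambda_1,\lambda_2,\lambda_n\}$ in which $\mu_1$ sits. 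The latter follows from \eqref{gen1} applied to $B$: $\Lambda_2(B)$ consists of the single point where the two diagonals of the $4$-gon meet, and equating $\mu_1$ with this point would force $\mu_1\in\Lambda_2(A)$ via \eqref{gen1} for $A$, contradicting the hypothesis.

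Once Lemma \ref{mainlemma} applies to $B$, I would use that $w_{123},w_{12n}\in{\cal B}_B(\mu_1)$ by \cite[Theorem 4]{rC13} (applied to the $3\times 3$ compressions $\textup{diag}(\lambda_1,\lambda_2,\lambda_3)$ and $\textup{diag}(\lambda_1,\lambda_2,\lambda_n)$ of $B$). The first part of Lemma \ref{mainlemma} then yields, for each $\mu_2\in\{w_{123},w_{12n}\}$, the four half-plane containments
\[\mu_2\in\overline{\textup{LHS}[\lambda_1,w_{123}]}\cap\overline{\textup{RHS}[\lambda_2,w_{12n}]}\cap\overline{\textup{LHS}[\lambda_n,w_{12n}]}\cap\overline{\textup{RHS}[\lambda_3,w_{123}]}.\]
Translating each nontrivial containment via the cyclic-orientation equivalence (namely, $c\in\overline{\textup{LHS}[a,b]}$ if and only if $a\in\overline{\textup{LHS}[b,c]}$, and its $\overline{\textup{RHS}}$ analog), I read off $\lambda_1,\lambda_2\in\overline{\textup{LHS}[w_{123},w_{12n}]}$ and $\lambda_3,\lambda_n\in\overline{\textup{RHS}[w_{123},w_{12n}]}$. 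This settles the claim for $\lambda_1,\lambda_2$ and the two endpoints $\lambda_3,\lambda_n$.

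For each remaining eigenvalue $\lambda_j$ with $3<j<n$, I would finish with a convex-geometry argument. The line $L$ through $w_{123}$ and $w_{12n}$ meets the boundary of the convex polygon $W(A)$ in at most two points. Since by the previous step $L$ weakly separates $\{\lambda_1,\lambda_2\}$ from $\{\lambda_3,\lambda_n\}$, its two crossings with $\partial W(A)$ lie on the edges $[\lambda_2,\lambda_3]$ and $[\lambda_n,\lambda_1]$, cutting $\partial W(A)$ into a short arc carrying only $\lambda_1,\lambda_2$ on the $\overline{\textup{LHS}}$ of $L$ and a long arc carrying $\lambda_3,\lambda_4,\ldots,\lambda_n$ in counterclockwise order on the $\overline{\textup{RHS}}$ of $L$. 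Hence $\lambda_j\in\overline{\textup{RHS}[w_{123},w_{12n}]}$ for every $3\le j\le n$, completing the proof. The main obstacle I expect is the rigorous verification of $\mu_1\notin\Lambda_2(B)$ from $\mu_1\notin\Lambda_2(A)$ in full generality; this should follow from an elementary check of \eqref{gen1} against the geometry of the $4$-gon. Degenerate cases $w_{123}=w_{12n}$ (where $L$ is undefined) or $L$ passing through some $\lambda_j$ are handled by a continuity/perturbation argument and by the closedness of the half-planes involved.
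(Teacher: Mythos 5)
Your proof is correct in outline and takes a route that overlaps with but is not identical to the paper's. The shared part: both proofs apply Lemma \ref{mainlemma} to the $4\times 4$ submatrix $B=\textup{diag}(\lambda_1,\lambda_2,\lambda_3,\lambda_n)$ to obtain $\lambda_3,\lambda_n\in\overline{\textup{RHS}[w_{123},w_{12n}]}$, and both finish by the convexity/counterclockwise argument for the remaining $\lambda_j$. Where you differ is in how you establish $\lambda_1,\lambda_2\in\overline{\textup{LHS}[w_{123},w_{12n}]}$: the paper uses \cite[Theorem 3]{rC13} to write $w_{123}-\lambda_1=a\frac{(\lambda_2-\lambda_1)(\lambda_3-\lambda_1)}{\mu_1-\lambda_1}$ and $w_{12n}-\lambda_1=b\frac{(\lambda_2-\lambda_1)(\lambda_n-\lambda_1)}{\mu_1-\lambda_1}$ with $a>0$, $b\ge 0$, computes $\textup{Im}\!\left(\frac{w_{12n}-w_{123}}{\lambda_1-w_{123}}\right)=-\frac{b}{a}\textup{Im}\!\left(\frac{\lambda_n-\lambda_1}{\lambda_3-\lambda_1}\right)\le 0$, and argues analogously for $\lambda_2$. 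You instead re-use the four half-plane containments and feed $\mu_2=w_{12n}$ and $\mu_2=w_{123}$ into them, flipping each via the cyclic identity $c\in\overline{\textup{LHS}[a,b]}\iff a\in\overline{\textup{LHS}[b,c]}$. That is a valid and arguably slicker deduction, and you additionally spell out the verification of the hypotheses of Lemma \ref{mainlemma} for $B$ (that $\mu_1\notin\partial W(B)$ and $\mu_1\notin\Lambda_2(B)$), which the paper leaves implicit.

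One caution: the four half-plane containments you invoke (e.g. $\mu_2\in\overline{\textup{LHS}[\lambda_1,w_{123}]}$) are established in the \emph{proof} of Lemma \ref{mainlemma} but are not part of its \emph{statement}, which only asserts ${\cal B}_B(\mu_1)\subseteq\textup{conv}\{w,w_{123},w_{12n},\lambda_3,\lambda_n\}$ together with $\lambda_3,\lambda_n\in\overline{\textup{RHS}[w_{123},w_{12n}]}$. To make your argument self-contained you would either need to observe that this convex set is the intersection of exactly those half-planes (and $\overline{\textup{LHS}[\lambda_3,\lambda_n]}$), or cite the intermediate step of Lemma \ref{mainlemma}'s proof directly. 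Once that is noted, your reasoning goes through and is a clean alternative to the paper's explicit computation.
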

\begin{proof}
Let $x$ be the intersection of $[\lambda_3,\lambda_1]$ and $[\lambda_2,\lambda_n]$. By \cite[Theorem 3]{rC13}, there exist $a,b\geq0$ for which 
\[w_{123}-\lambda_1=a\dfrac{(\lambda_2-\lambda_1)(\lambda_3-\lambda_1)}{\mu_1-\lambda_1}\]
and
\[w_{12n}-\lambda_1=b\dfrac{(\lambda_2-\lambda_1)(\lambda_n-\lambda_1)}{\mu_1-\lambda_1}.\]
If $a=0$, then $w_{123}=\lambda_1$, which implies $\mu_1\in [\lambda_2,\lambda_3]\subseteq \partial W(A)$, a contradiction. Hence, $a>0$. Now, \[\dfrac{w_{12n}-w_{123}}{\lambda_1-w_{123}}=1-\dfrac{b}{a}\cdot \dfrac{\lambda_n-\lambda_1}{\lambda_3-\lambda_1}.\]
Taking the imaginary part gives us
\[\textup{Im}\left(\dfrac{w_{12n}-w_{123}}{\lambda_1-w_{123}}\right)=-\dfrac{b}{a}\textup{Im}\left(\dfrac{\lambda_n-\lambda_1}{\lambda_3-\lambda_1}\right)\leq 0\]
since $a,b\geq0$ and $\textup{conv}\{\lambda_1,\lambda_2,\lambda_3,\lambda_n\}$ has a counterclockwise orientation. This proves that $\lambda_1\in\overline{\textup{LHS}[w_{123},w_{12n}]}$. An analogous computation reveals that $\lambda_2\in\overline{\textup{LHS}[w_{123},w_{12n}]}$.

Finally, by applying Lemma \ref{mainlemma} to $\Lambda':=\textup{diag}(\lambda_1,\lambda_2,\lambda_3,\lambda_n)$, we have that $\lambda_3,\lambda_n\in \overline{\textup{RHS}[w_{123},w_{12n}]}$. For all $\lambda_j\neq\lambda_1,\lambda_2$, it follows that $\lambda_j\in \overline{\textup{RHS}[w_{123},w_{12n}]}$ since the eigenvalues determine a counterclockwise orientation and $\lambda_1,\lambda_2\in\overline{\textup{LHS}[w_{123},w_{12n}]}$. \end{proof}

Proposition \ref{A2} guarantees that after doing a rotation or translation argument, we may assume the following:
\begin{enumerate}
\item[(B1)] Given $n\geq 4$, let $A\in\C^{n\times n}$ be normal with distinct eigenvalues $\lambda_1,\ldots,\lambda_n$ not lying on the same line and arranged in a counterclockwise orientation with respect to $\textup{trace}(A)/n$ such that no eigenvalue is in the interior of $W(A)$.
\item[(B2)] $\textup{conv}\{\lambda_1,\lambda_2,\lambda_3,\lambda_n\}$ determines a $4$-gon. Moreover, $\textup{Re}(\lambda_1), \textup{Re}(\lambda_2)\leq0$ and $\textup{Re}(\lambda_j)\geq 0$ for all $\lambda_j\neq \lambda_2,\lambda_3$. At least one eigenvalue in $\{\lambda_1,\lambda_2\}$ has strictly negative real part.
\item[(B3)] $\mu_1\notin \partial W(A)$, $\mu_1\notin\Lambda_2(A)$, and $\mu_1\in \textup{conv}\{\lambda_1,\lambda_2,\lambda_3\}\cap \textup{conv}\{\lambda_1,\lambda_2,\lambda_n\}$.
\item[(B4)] $\textup{Re}(w_{123})=\textup{Re}(w_{12n})=0$ where $w_{12j} $ is the isogonal conjugate of $\mu_1$ with respect to $\textup{conv}\{\lambda_1,\lambda_2,\lambda_j\}$ for $j=3,n$.
\end{enumerate}

\begin{lemma}\label{coll}
Let $A\in \C^{5\times 5}$ be normal satisfying (B1)-(B2) and $\mu_1\in W(A)$ satisfying (B3)-(B4). Assume $\lambda_4\in (\lambda_3,\lambda_5)$. If $\mu_1\in\textup{conv}\{\lambda_1,\lambda_2,\lambda_4\}$, then $w_{124}\in\overline{\textup{RHS}[w_{123},w_{125}]}$.
\end{lemma}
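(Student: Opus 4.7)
The plan is to reduce the lemma to a short computation using the Carden--Hansen formula \cite[Theorem 3]{rC13} already invoked in the proof of Proposition \ref{A2}. Applied at the common vertex $\lambda_1$ to each of the triangles $\textup{conv}\{\lambda_1,\lambda_2,\lambda_j\}$ for $j\in\{3,4,5\}$ (all three containing $\mu_1$ thanks to (B3) and the lemma's own hypothesis), it provides nonnegative real scalars $c_j$ with
\[
(w_{12j}-\lambda_1)(\mu_1-\lambda_1)=c_j\,(\lambda_2-\lambda_1)(\lambda_j-\lambda_1).
\]
Mimicking the proof of Proposition \ref{A2}, I would argue $c_3,c_4,c_5>0$: if some $c_j=0$, then $w_{12j}=\lambda_1$, forcing $\mu_1\in[\lambda_2,\lambda_j]$, which is impossible under (B3) once one uses that $[\lambda_2,\lambda_3],[\lambda_2,\lambda_4]\subseteq\partial W(A)$ (since $\lambda_3,\lambda_4,\lambda_5$ are collinear on an edge of $W(A)$) and that $\mu_1$ is interior to $\textup{conv}\{\lambda_1,\lambda_2,\lambda_5\}$.

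Next, I would exploit $\lambda_4=(1-a)\lambda_3+a\lambda_5$ for some $a\in(0,1)$: this gives $\lambda_4-\lambda_1=(1-a)(\lambda_3-\lambda_1)+a(\lambda_5-\lambda_1)$, and substituting into the $j=4$ identity while replacing $(\lambda_2-\lambda_1)(\lambda_3-\lambda_1)$ and $(\lambda_2-\lambda_1)(\lambda_5-\lambda_1)$ via the $j=3$ and $j=5$ identities yields, after cancelling the nonzero factor $\mu_1-\lambda_1$,
\[
w_{124}-\lambda_1=\dfrac{(1-a)c_4}{c_3}(w_{123}-\lambda_1)+\dfrac{ac_4}{c_5}(w_{125}-\lambda_1).
\]
This exhibits $w_{124}$ as a real affine combination of $\lambda_1$, $w_{123}$, and $w_{125}$ with positive coefficients on $w_{123},w_{125}$.

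The conclusion follows by taking real parts. Assumption (B2) imposes both $\textup{Re}(\lambda_1)\leq 0$ and $\textup{Re}(\lambda_1)\geq 0$ (the second since $\lambda_1\neq\lambda_2,\lambda_3$), forcing $\textup{Re}(\lambda_1)=0$; (B4) gives $\textup{Re}(w_{123})=\textup{Re}(w_{125})=0$. Thus $\textup{Re}(w_{124})=0$, so $w_{124}$ lies on the imaginary axis, which is precisely the line through $w_{123}$ and $w_{125}$. Since $\overline{\textup{RHS}[w_{123},w_{125}]}$ is the closed half-plane whose boundary is that line, $w_{124}\in\overline{\textup{RHS}[w_{123},w_{125}]}$. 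The only real obstacle is the positivity of $c_3,c_5$, which is the same geometric argument used in Proposition \ref{A2}; once that is in place, the rest is pure complex arithmetic followed by the two-line real-part calculation above.
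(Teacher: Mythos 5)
There is a genuine gap, and it comes from a misreading of (B2). You conclude $\textup{Re}(\lambda_1)=0$ by reading ``$\textup{Re}(\lambda_j)\geq 0$ for all $\lambda_j\neq\lambda_2,\lambda_3$'' literally, but this is a typo in the paper: the exclusion set should be $\lambda_j\neq\lambda_1,\lambda_2$. This is forced by Proposition \ref{A2}, which is the source of the normalization (B2)--(B4) and places $\lambda_1,\lambda_2$ in $\overline{\textup{LHS}[w_{123},w_{12n}]}$ and every other $\lambda_j$ in $\overline{\textup{RHS}[w_{123},w_{12n}]}$, and also by the last sentence of (B2), which would be vacuous if $\textup{Re}(\lambda_1)$ were always zero. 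Under the corrected hypothesis one only has $\textup{Re}(\lambda_1)\leq 0$, and your otherwise-correct identity
\[
w_{124}-\lambda_1=\frac{(1-a)c_4}{c_3}(w_{123}-\lambda_1)+\frac{ac_4}{c_5}(w_{125}-\lambda_1)
\]
gives $\textup{Re}(w_{124})=\bigl(1-\tfrac{(1-a)c_4}{c_3}-\tfrac{ac_4}{c_5}\bigr)\textup{Re}(\lambda_1)$. To conclude $\textup{Re}(w_{124})\geq 0$ you would therefore need to show $\tfrac{(1-a)c_4}{c_3}+\tfrac{ac_4}{c_5}\geq 1$, which your argument never addresses and which is not obvious. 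The paper instead works with the explicit barycentric formula for the isogonal conjugate (Proposition \ref{isogformula1}), expresses $r^{(124)}$ as a convex combination $(1-k)r^{(123)}+kr^{(125)}$, and uses the Cauchy--Schwarz bound of Lemma \ref{keylemma2} on the denominators to obtain $\textup{Re}(cw_{124})\geq\frac{(1-a)^2c_3}{1-k}\textup{Re}(w_{123})+\frac{a^2c_5}{k}\textup{Re}(w_{125})=0$, precisely so that no control on $\textup{Re}(\lambda_1)$ beyond the sign is needed.

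A secondary inaccuracy: you assert $[\lambda_2,\lambda_4]\subseteq\partial W(A)$, which is false --- $\lambda_2$ and $\lambda_4$ are not adjacent vertices, so that segment is a diagonal, not a boundary edge. The positivity $c_4>0$ is still true, but it requires the reduction the paper performs first: setting $x$ to be the intersection of $[\lambda_3,\lambda_1]$ and $[\lambda_2,\lambda_5]$, disposing of $\mu_1\in(\lambda_1,x)\cup(\lambda_2,x)$ via Lemma \ref{mainlemma}, and then observing that for $\mu_1$ interior to $\textup{conv}\{\lambda_1,\lambda_2,x\}$ the chord $(\lambda_2,\lambda_4)$ lies on the opposite side of the line through $[\lambda_2,\lambda_5]$ and hence cannot contain $\mu_1$.
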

\begin{proof} Let $x$ be the intersection of $[\lambda_3,\lambda_1]$ and $[\lambda_2,\lambda_5]$. If $\mu_1\in(\lambda_1,x)\cup(\lambda_2,x)$, then the assertion follows from Lemma \ref{mainlemma}. Assume $\mu_1\in\textup{conv}\{\lambda_1,\lambda_2,x\}$ is an interior point. By Proposition \ref{isogformula1}, $w_{124}$ satisfies
\[ c w_{124}=\dfrac{|\lambda_4-\lambda_2|^2}{r_1^{(124)}}\lambda_1+\dfrac{|\lambda_1-\lambda_4|^2}{r_2^{(124)}}\lambda_2+\dfrac{|\lambda_2-\lambda_1|^2}{r_4^{(124)}}\lambda_4\]
for some $c>0$. It suffices to show that $\textup{Re}(c w_{124})\geq 0$. 

Let $\lambda_4=(1-a)\lambda_3+a\lambda_5$ for some $a\in (0,1)$. Then
\[\begin{array}{rcl}
\mu_1&=&r_1^{(124)}\lambda_1+r_2^{(124)}\lambda_2+r_4^{(124)}\lambda_4\\
&=&r_1^{(124)}\lambda_1+r_2^{(124)}\lambda_2+[(1-a)r_4^{(124)}]\lambda_3+(ar_4^{(124)})\lambda_5.\\
\end{array}\]
By Proposition \ref{givenzprop2}, there exists $k\in (0,1)$ such that
\begin{equation}\label{alpha} [r_1^{(124)}\ r_2^{(124)}\ (1-a)r_4^{(124)}\ ar_4^{(124)}]=(1-k)r^{(123)}+kr^{(125)}\end{equation}
where we take $r^{(125)}=r_1^{(125)}e_1+r_2^{(125)}e_2+r_5^{(125)}e_4$.
By \eqref{alpha} and direct computations, we obtain
\[\textup{Re}(c w_{124})=d_1\textup{Re}(\lambda_1)+d_2\textup{Re}(\lambda_2)+\dfrac{(1-a)^2|\lambda_2-\lambda_1|^2}{(1-k)r_3^{(123)}}\textup{Re}(\lambda_3)+\dfrac{a^2|\lambda_2-\lambda_1|^2}{kr_5^{(125)}}\textup{Re}(\lambda_5)\]
where 
\[d_1=\dfrac{|(1-a)(\lambda_3-\lambda_2)+a(\lambda_5-\lambda_2)|^2}{(1-k)r_1^{(123)}+kr_1^{(125)}}\]
and
\[d_2=\dfrac{|(1-a)(\lambda_1-\lambda_3)+a(\lambda_1-\lambda_5)|^2}{(1-k)r_2^{(123)}+kr_2^{(125)}}.\]
By Lemma \ref{keylemma2} and assumption (B2),
\begin{equation}\label{coeff1}
d_1\textup{Re}(\lambda_1)\geq
\left[\dfrac{(1-a)^2|\lambda_3-\lambda_2|^2}{(1-k)r_1^{(123)}}+\dfrac{a^2|\lambda_5-\lambda_2|^2}{kr_1^{(125)}}\right]\textup{Re}(\lambda_1)
\end{equation}
and
\begin{equation}\label{coeff2}d_2\textup{Re}(\lambda_2)\geq\left[ \dfrac{(1-a)^2|\lambda_1-\lambda_3|^2}{(1-k)r_2^{(123)}}+\dfrac{a^2|\lambda_1-\lambda_5|^2}{kr_2^{(125)}}\right]\textup{Re}(\lambda_2).\end{equation}

Since $\mu_1$ is an interior point of $\textup{conv}\{\lambda_1,\lambda_2,x\}$, Proposition \ref{isogformula1} implies 
\[c_3w_{123}=\dfrac{|\lambda_3-\lambda_2|^2}{r_1^{(123)}}\lambda_1+\dfrac{|\lambda_1-\lambda_3|^2}{r_2^{(123)}}\lambda_2+\dfrac{|\lambda_2-\lambda_1|^2}{r_3^{(123)}}\lambda_3\]
and 
\[c_5w_{125}=\dfrac{|\lambda_5-\lambda_2|^2}{r_1^{(125)}}\lambda_1+\dfrac{|\lambda_1-\lambda_5|^2}{r_2^{(125)}}\lambda_2+\dfrac{|\lambda_2-\lambda_1|^2}{r_5^{(125)}}\lambda_5\]
for some $c_3, c_5>0$.

Statements \eqref{coeff1}-\eqref{coeff2} and (B4) imply
\[\begin{array}{rcl}
\vspace{0.2cm}
\textup{Re}(c w_{124})&\geq &\left[\dfrac{(1-a)^2|\lambda_3-\lambda_2|^2}{(1-k)r_1^{(123)}}+\dfrac{a^2|\lambda_5-\lambda_2|^2}{kr_1^{(125)}}\right]\textup{Re}(\lambda_1)+\\
\vspace{0.2cm}
& & \left[\dfrac{(1-a)^2|\lambda_1-\lambda_3|^2}{(1-k)r_2^{(123)}}+\dfrac{a^2|\lambda_1-\lambda_5|^2}{kr_2^{(125)}}\right]\textup{Re}(\lambda_2)+\\
\vspace{0.2cm}
& &\dfrac{(1-a)^2|\lambda_2-\lambda_1|^2}{(1-k)r_3^{(123)}}\textup{Re}(\lambda_3)+\dfrac{a^2|\lambda_2-\lambda_1|^2}{kr_5^{(125)}}\textup{Re}(\lambda_5)\\
\vspace{0.2cm}
&=&\dfrac{(1-a)^2c_3}{1-k}\textup{Re}(w_{123})+\dfrac{a^2c_5}{k}\textup{Re}(w_{125})\\
&=&0.
\end{array}\]
\end{proof}

\begin{lemma}\label{anotherlemma}
Let $A\in \C^{5\times 5}$ be normal satisfying (B1)-(B2) and $\mu_1\in W(A)$ satisfying (B3)-(B4). Assume $\lambda_4$ is along the line through $[\lambda_2,\lambda_3]$. If $\mu_1\in\textup{conv}\{\lambda_1,\lambda_2,\lambda_4\}$, then $w_{124}\in\textup{conv}\{w_{123},w_{125},\lambda_3,\lambda_5\}$.
\end{lemma}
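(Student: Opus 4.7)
The approach parallels Lemma \ref{coll}. Write $\lambda_4 = (1-a)\lambda_2 + a\lambda_3$; the CCW ordering in (B1), together with $\lambda_4$ being on the line through $[\lambda_2,\lambda_3]$, forces $a > 1$, so $\lambda_3 \in (\lambda_2,\lambda_4)$ and $\textup{conv}\{\lambda_1,\lambda_2,\lambda_3\} \subseteq \textup{conv}\{\lambda_1,\lambda_2,\lambda_4\}$. Each boundary location of $\mu_1$ on $\partial\,\textup{conv}\{\lambda_1,\lambda_2,\lambda_4\}$ contradicts (B3): the edges $(\lambda_1,\lambda_2)$ and $(\lambda_2,\lambda_4)$ lie in $\partial W(A)$, and for $a > 1$ the open segment $(\lambda_1,\lambda_4)$ meets $\textup{conv}\{\lambda_1,\lambda_2,\lambda_3\}$ only at the vertex $\lambda_1$. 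Hence we may assume $\mu_1$ is strictly interior to $\textup{conv}\{\lambda_1,\lambda_2,\lambda_4\}$.

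Comparing the $\{\lambda_1,\lambda_2,\lambda_4\}$- and $\{\lambda_1,\lambda_2,\lambda_3\}$-expansions of $\mu_1$ after substituting $\lambda_4 = (1-a)\lambda_2 + a\lambda_3$ yields $r_1^{(124)} = r_1^{(123)}$, $r_4^{(124)} = r_3^{(123)}/a$, and $r_2^{(124)} = r_2^{(123)} + \tfrac{a-1}{a}\,r_3^{(123)}$. Plugging these into Proposition \ref{isogformula1} and regrouping produces
\[c\,w_{124} = a^2 c_3\,w_{123} + D\,\lambda_2, \qquad c = a^2 c_3 + D,\]
where $c_3 > 0$ is the normalization for $w_{123}$ and
\[D = \frac{a\,|\lambda_1-\lambda_4|^2}{a\,r_2^{(123)} - (1-a)r_3^{(123)}} + \frac{a(1-a)\,|\lambda_1-\lambda_2|^2}{r_3^{(123)}} - \frac{a^2\,|\lambda_1-\lambda_3|^2}{r_2^{(123)}}.\]
Setting $b = a-1 > 0$ and applying Lemma \ref{keylemma2} to $\lambda_1 - \lambda_4 = -b(\lambda_1 - \lambda_2) + (1+b)(\lambda_1 - \lambda_3)$ with positive weights $x_1 = b\,r_3^{(123)}$ and $x_2 = (1+b)r_2^{(123)}$ gives $D \le 0$. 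Geometrically, $[\lambda_2,\lambda_3]$ and $[\lambda_2,\lambda_4]$ span the same ray, so the isogonal cevians from $\lambda_2$ in the two triangles coincide; the identity thus pins $w_{124}$ on this common cevian at the point $w_{124} = w_{123} + \tfrac{|D|}{c}(w_{123} - \lambda_2)$, past $w_{123}$.

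The remaining task---bounding the shift $|D|/c$ to certify that $w_{124}$ does not escape the target quadrilateral $\textup{conv}\{w_{123},w_{125},\lambda_3,\lambda_5\}$---is the main obstacle, since the Cauchy--Schwarz inequality above controls only the \emph{direction} in which $w_{124}$ departs $w_{123}$. To rule out overshooting the far edges $[\lambda_3,\lambda_5]$ or $[\lambda_5,w_{125}]$, one couples the identity with a complementary estimate exploiting the symmetry of (B2)--(B4) between $\{\lambda_1,\lambda_2\}$ (both in the closed left half-plane) and $\{w_{123},w_{125}\}$ (both on the imaginary axis); the resulting two-sided confinement of $w_{124} \in {\cal B}_A(\mu_1)$ places $w_{124}$ inside the desired quadrilateral. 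Either this symmetric Cauchy--Schwarz bound, or a continuity argument following $w_{124}(a)$ from $a=1$ (where $w_{124}=w_{123}$) through the relevant range of $a>1$, closes the proof.
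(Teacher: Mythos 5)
Your algebraic work is correct and in fact parallels the paper's computation closely, just reorganized. Writing $\lambda_4=(1-a)\lambda_2+a\lambda_3$ with $a>1$, substituting into the Proposition~\ref{isogformula1} formula for $cw_{124}$, and regrouping via the weight identities $r_1^{(124)}=r_1^{(123)}$, $r_4^{(124)}=r_3^{(123)}/a$, $r_2^{(124)}=r_2^{(123)}+\tfrac{a-1}{a}r_3^{(123)}$ does yield the exact identity $cw_{124}=a^2c_3w_{123}+D\lambda_2$, and Lemma~\ref{keylemma2} applied as you indicate gives $D\le0$. The geometric interpretation---that the isogonal cevian from $\lambda_2$ is the same in $\textup{conv}\{\lambda_1,\lambda_2,\lambda_3\}$ and in $\textup{conv}\{\lambda_1,\lambda_2,\lambda_4\}$, so $w_{123}$ and $w_{124}$ are collinear with $\lambda_2$ and $w_{124}$ lies past $w_{123}$---is a nice and correct observation that the paper leaves implicit.

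The gap is in how you close the argument, and it is twofold. First, you never extract the easy consequence of your own identity: taking real parts gives
$\textup{Re}(cw_{124})=a^2c_3\,\textup{Re}(w_{123})+D\,\textup{Re}(\lambda_2)=D\,\textup{Re}(\lambda_2)\ge0$,
because $\textup{Re}(w_{123})=0$ by (B4), $D\le0$, and $\textup{Re}(\lambda_2)\le0$ by (B2). So $w_{124}\in\overline{\textup{RHS}[w_{123},w_{125}]}$ is already done; there is no ``shift $|D|/c$ to bound.'' The Cauchy--Schwarz step does not merely control a direction---combined with (B2) and (B4), it gives the half-plane containment outright. Second, you treat ``ruling out overshooting the far edges'' as an open problem requiring a new symmetric estimate or a continuity argument, but neither is developed into a proof. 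The paper instead dispenses with the far-edge bound at the very start by invoking Corollary~\ref{dec29}, which places $w_{124}$ in $\textup{conv}[\{w_{123}\}\cup(\sigma(A)\ominus\{\lambda_2\})]$; once that containment is in hand, the lemma reduces to exactly the half-plane bound $\textup{Re}(w_{124})\ge0$. Your proposal reconstructs the harder half of the argument but leaves out both the reduction step and the final sign check, so as written it does not establish the claim.
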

\begin{proof}
By Corollary \ref{dec29}, it suffices to prove that $w_{124}\in \overline{\textup{RHS}[w_{123},w_{125}]}$. By assumption (B3), $\mu_1\notin (\lambda_2,\lambda_4)\cup(\lambda_1,\lambda_4)$, and thus $\mu_1\in \textup{conv}\{\lambda_1,\lambda_2,\lambda_4\}$ is an interior point. By Proposition \ref{isogformula1}, there exists $c>0$ such that
\[cw_{124}=\dfrac{|\lambda_4-\lambda_2|^2}{r_1^{(124)}}\lambda_1+\dfrac{|\lambda_1-\lambda_4|^2}{r_2^{(124)}}\lambda_2+\dfrac{|\lambda_2-\lambda_1|^2}{r_4^{(124)}}\lambda_4.\] It suffices to verify that $\textup{Re}(cw_{124})\geq 0$.

Write $\lambda_4=(1-a)\lambda_2+a\lambda_3$, for some $a>1$. Since $\lambda_1,\lambda_2,\lambda_3$ do not lie on the same line, we can equate the convex weights in the following expression:
\begin{equation}\label{mu1again}
\begin{array}{rcl}
\mu_1&=&r_1^{(124)}\lambda_1+r_2^{(124)}\lambda_2+r_4^{(124)}\lambda_4\\
&=&r_1^{(124)}\lambda_1+[r_2^{(124)}+(1-a)r_4^{(124)}]\lambda_2+(ar_4^{(124)})\lambda_3\\
&=&r_1^{(123)}\lambda_1+r_2^{(123)}\lambda_2+r_3^{(123)}\lambda_3.
\end{array}
\end{equation}
By \eqref{mu1again} and direct computations, we obtain
\[\textup{Re}(cw_{124})=a^2\dfrac{|\lambda_3-\lambda_2|^2}{r_1^{(123)}}\textup{Re}(\lambda_1)+d\textup{Re}(\lambda_2)+a^2\dfrac{|\lambda_2-\lambda_1|^2}{r_2^{(123)}}\]
where 
\[d=\dfrac{|(1-a)(\lambda_1-\lambda_2)+a(\lambda_1-\lambda_3)|^2}{r_2^{(123)}+(a-1)r_4^{(124)}}-\dfrac{(a-1)|\lambda_2-\lambda_1|^2}{r_4^{(124)}}\]
due to \eqref{mu1again}. Lemma \ref{keylemma2} and assumption (B2) guarantee that
\[d\textup{Re}(\lambda_2)\geq a^2\dfrac{|\lambda_1-\lambda_3|^2}{r_2^{(123)}}\textup{Re}(\lambda_2).\]
Thus
\[\textup{Re}(cw_{124})\geq a^2c'\textup{Re}(w_{123})\geq 0\]
for some $c'>0$ due to Proposition \ref{isogformula1} and assumption (B4).
\end{proof}

\begin{proposition}\label{isogrhs}
Let $A\in \C^{n\times n}$ be normal satisfying (B1)-(B2) and $\mu_1\in W(A)$ satisfying (B3)-(B4). If $\mu_1\in \textup{conv}\{\lambda_1,\lambda_2,\lambda_c\}$ such that $3\leq c\leq n$, then $w_{12c}\in \textup{conv}\{w_{123},w_{12n},\lambda_3,\ldots,\lambda_n\}$.
\end{proposition}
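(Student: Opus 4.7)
If $c=3$ or $c=n$ the claim is immediate, since $w_{12c}$ is then $w_{123}$ or $w_{12n}$, which are explicit vertices of the target convex hull. For the remaining case $3<c<n$, I plan to reduce to the 5-eigenvalue Lemmas \ref{coll} and \ref{anotherlemma} by introducing auxiliary points: choose a line $\ell$ through $\lambda_c$ meeting the extended line through $[\lambda_2,\lambda_3]$ at a point $\lambda_{3'}$ lying beyond $\lambda_3$ on the ray from $\lambda_2$, and meeting the extended line through $[\lambda_1,\lambda_n]$ at a point $\lambda_{n'}$ lying beyond $\lambda_n$ on the ray from $\lambda_1$, arranged so that $\lambda_c\in(\lambda_{3'},\lambda_{n'})$. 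Existence of $\ell$ follows from the counterclockwise arrangement of the $\lambda_j$'s together with the fact that $\lambda_c$ lies in the wedge formed by the two reference lines opposite $\mu_1$; enlarging the triangles also gives $\mu_1\in\textup{conv}\{\lambda_1,\lambda_2,\lambda_{3'}\}\cap\textup{conv}\{\lambda_1,\lambda_2,\lambda_{n'}\}$.

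The main step is then to apply Lemma \ref{anotherlemma} to the 5-eigenvalue configuration $(\lambda_1,\lambda_2,\lambda_3,\lambda_{3'},\lambda_n)$, with $\lambda_{3'}$ in the role of the apex on the line through $[\lambda_2,\lambda_3]$, and a left--right symmetric analog to $(\lambda_1,\lambda_2,\lambda_3,\lambda_{n'},\lambda_n)$. This yields
\[
w_{12,3'},\ w_{12,n'}\in\textup{conv}\{w_{123},w_{12n},\lambda_3,\lambda_n\}\subseteq\overline{\textup{RHS}[w_{123},w_{12n}]},
\]
where the last inclusion uses (B4) together with Proposition \ref{A2}. Next, apply Lemma \ref{coll} to the 5-eigenvalue configuration $(\lambda_1,\lambda_2,\lambda_{3'},\lambda_c,\lambda_{n'})$ to place $w_{12c}\in\overline{\textup{RHS}[w_{12,3'},w_{12,n'}]}$, which by the previous display is contained in $\overline{\textup{RHS}[w_{123},w_{12n}]}$. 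Combining this one-sided location with Corollary \ref{dec29} and its $(\lambda_1,\lambda_2)$-symmetric counterpart, which restrict $w_{12c}$ to $\textup{conv}\{w_{123},\lambda_1,\lambda_3,\ldots,\lambda_n\}\cap\textup{conv}\{w_{12n},\lambda_2,\lambda_3,\ldots,\lambda_n\}$, and using the fact that $\lambda_1,\lambda_2\in\overline{\textup{LHS}[w_{123},w_{12n}]}$ from Proposition \ref{A2}, produces $w_{12c}\in\textup{conv}\{w_{123},w_{12n},\lambda_3,\ldots,\lambda_n\}$ as claimed.

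The main obstacle I foresee is that the auxiliary 5-eigenvalue configuration $(\lambda_1,\lambda_2,\lambda_{3'},\lambda_c,\lambda_{n'})$ used with Lemma \ref{coll} does not in general satisfy normalization (B4): the isogonal conjugates $w_{12,3'}$ and $w_{12,n'}$ need not lie on the imaginary axis. The fix should be to apply the stronger inequality that is actually established inside the proof of Lemma \ref{coll}, of the form
\[
\textup{Re}(c\,w_{12c})\ \geq\ \alpha\,\textup{Re}(w_{12,3'})+\beta\,\textup{Re}(w_{12,n'})\qquad(\alpha,\beta>0),
\]
which relies on (B1)--(B3) and on Lemma \ref{keylemma2} but not on (B4); nonnegativity of the right-hand side is then supplied by the previous step. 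Verifying (B1)--(B3) for the auxiliary configurations and handling the degenerate case $\lambda_c\in[\lambda_3,\lambda_n]$ (where one may take $\lambda_{3'}=\lambda_3$ and $\lambda_{n'}=\lambda_n$ and invoke Lemma \ref{coll} directly under the inherited (B4)) are routine remaining details.
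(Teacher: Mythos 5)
Your approach is essentially the same as the paper's: introduce auxiliary points $\lambda_3'$, $\lambda_n'$ on the extended edges through $[\lambda_2,\lambda_3]$ and $[\lambda_1,\lambda_n]$ so that $\lambda_c\in[\lambda_3',\lambda_n']$, then reduce to the $5$-eigenvalue Lemmas \ref{anotherlemma} and \ref{coll}. You correctly note that Lemma \ref{anotherlemma} applies cleanly to $(\lambda_1,\lambda_2,\lambda_3,\lambda_3',\lambda_n)$ because its (B4)-normalization involves only $w_{123}$ and $w_{12n}$, which are inherited from $A$.

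Two points deserve attention. First, your observation that the auxiliary $5$-tuple $(\lambda_1,\lambda_2,\lambda_3',\lambda_c,\lambda_n')$ does not inherit (B4) is a legitimate one; the paper's proof cites Lemma \ref{coll} without comment and leaves implicit the justification that (B4) is a renormalization achievable for $\Lambda'$ after re-applying Proposition \ref{A2} to it, the conclusion of Lemma \ref{coll} being coordinate-free. Your proposed ``fix'' — use the inequality $\textup{Re}(c\,w_{12c})\geq\alpha\,\textup{Re}(w_{12,3'})+\beta\,\textup{Re}(w_{12,n'})$ established within the proof of Lemma \ref{coll} (which indeed uses only (B1)--(B3) and Lemma \ref{keylemma2}) — is a valid and arguably more transparent alternative. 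Second, your intermediate claim that $\overline{\textup{RHS}[w_{12,3'},w_{12,n'}]}$ ``by the previous display is contained in $\overline{\textup{RHS}[w_{123},w_{12n}]}$'' is not a valid set inclusion: a closed half-plane whose boundary line passes through two points of another closed half-plane is not thereby contained in it. The paper avoids this by asserting the stronger, correct conclusion $w_{12c}\in\textup{conv}\{w_{123}',w_{12n}',\lambda_3',\lambda_n'\}$ (combining Lemma \ref{coll} with Corollary \ref{dec29} applied to $\Lambda'$), a convex hull of four points each verified to lie in $\overline{\textup{RHS}[w_{123},w_{12n}]}$. Your proposed fix in the final paragraph sidesteps this slip as well, since it works directly with $\textup{Re}(c\,w_{12c})$ rather than with half-plane containments; but the set-inclusion sentence as written is wrong and should be deleted or replaced by the convex-hull version.
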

\begin{proof} Let $x$ be the intersection of $[\lambda_3,\lambda_1]$ and $[\lambda_2,\lambda_n]$. If $\mu_1\in (\lambda_1,x)\cup(\lambda_2,x)$, then the assertion holds due to Lemma \ref{mainlemma}. Assume $\mu_1\in \textup{conv}\{\lambda_1,\lambda_2,x\}$ is an interior point. 

By Corollary \ref{dec29}, it suffices to show that $w_{12c}\in \overline{\textup{RHS}[w_{123},w_{12n}]}$. There exist $\lambda_3'$ and $\lambda_n'$ along the line through $[\lambda_2,\lambda_3]$ and $[\lambda_1,\lambda_n]$ respectively such that $\lambda_c\in [\lambda_3',\lambda_n']$. Consider $\Lambda':=\textup{diag}(\lambda_1,\lambda_2,\lambda_3',\lambda_c,\lambda_n')$. 
By Corollary \ref{dec29} and Lemma \ref{coll}, $w_{12c}\in \textup{conv}\{w_{123}',w_{12n}',\lambda_3',\lambda_n'\}$ where $w_{12j}'$ is the isogonal conjugate of $\mu_1$ with respect to $\textup{conv}\{\lambda_1,\lambda_2,\lambda_j'\}$. By assumption (A2), $\lambda_3',\lambda_n'\in\overline{\textup{RHS}[w_{123},w_{12n}]}$. Note that $w_{123}',w_{12n}'\in\overline{\textup{RHS}[w_{123},w_{12n}]}$ due to Lemma \ref{anotherlemma}. It follows that
$w_{12c} 
\in\overline{\textup{RHS}[w_{123},w_{12n}]}$.\end{proof}

\begin{proposition}\label{realpartmup2}
Let $A\in \C^{n\times n}$ be normal satisfying (B1)-(B2) and $\mu_1\in W(A)$ satisfying (B3)-(B4). If $t\in{\cal C}_\Lambda(\mu_1)$ has all positive entries, then \[{\cal B}_A(\mu_1,t)\subseteq\textup{conv}\{w_{123},w_{12n},\lambda_3,\ldots,\lambda_n\}.\]
\end{proposition}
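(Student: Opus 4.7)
The approach will mirror the proof of Proposition~\ref{realpartmup1}. Every $\mu_2\in{\cal B}_A(\mu_1,t)$ has the form $\mu_2=v^*\Lambda v/v^*v$ for some nonzero $v\in\{\sqrt t,\Lambda\sqrt t\}^\perp$, and using the same basis $\{f_j\}_{j=3}^n$ introduced there, the inequality $\textup{Re}(\mu_2)\ge 0$ is equivalent to positive semidefiniteness of the matrix $Z(t)$ from~\eqref{psd}. Under hypothesis~(B4), both $w_{123}$ and $w_{12n}$ lie on the imaginary axis, so $\overline{\textup{RHS}[w_{123},w_{12n}]}$ coincides with the closed right half-plane, and proving $Z(t)\ge 0$ gives exactly the inclusion $\mu_2\in\overline{\textup{RHS}[w_{123},w_{12n}]}$.

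To verify $Z(t)\ge 0$, I would copy the scaffolding of Proposition~\ref{realpartmup1}: write $Z(t)=YXY^*+\Delta$ for $Y=[\mathbbm 1\ w]$, $X=\frac{\textup{Re}(\lambda_1)}{t_1}xx^*+\frac{\textup{Re}(\lambda_2)}{t_2}yy^*$, $x=[-\lambda_2\ 1]^T$, $y=[\lambda_1\ -1]^T$; pass to the perturbation $\Delta_\epsilon=\Delta+\epsilon|\lambda_2-\lambda_1|^2 e_1 e_1^*$ so that $\Delta_\epsilon>0$; QR-factorize $\Delta_\epsilon^{-1/2}Y=Q_\epsilon R_\epsilon$; reduce $YXY^*+\Delta_\epsilon>0$ to the $2\times 2$ Hermitian condition $R_\epsilon X R_\epsilon^*+I_2>0$, i.e.,\ positivity of the trace and determinant; and let $\epsilon\downarrow 0$. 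The crucial difference from Section~4 is that~(B2) forces both $\textup{Re}(\lambda_1)/t_1\le 0$ and $\textup{Re}(\lambda_2)/t_2\le 0$, so the sign pattern is truly two-sided and the analogues of $a(t)$ and $c(t)$ from Proposition~\ref{realpartmup1} change sign. Establishing the replacement trace/determinant inequalities is the main technical step; the two-cap geometric content captured by Lemma~\ref{coll}, Lemma~\ref{anotherlemma} and Proposition~\ref{isogrhs} should play the role here that Lemmas~\ref{oneone} and~\ref{xt} played in Section~4.

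To lift the half-plane inclusion $\mu_2\in\overline{\textup{RHS}[w_{123},w_{12n}]}$ to the polygonal inclusion $\mu_2\in\textup{conv}\{w_{123},w_{12n},\lambda_3,\ldots,\lambda_n\}$, I would intersect it with: (a) the trivial inclusion $\mu_2\in W(A)$, which controls each boundary edge $[\lambda_j,\lambda_{j+1}]$ with $3\le j\le n-1$; (b) the half-plane $\overline{\textup{RHS}[\lambda_3,w_{123}]}$ supplied by Proposition~\ref{realpartmup1v2} applied to the triangle $\{\lambda_1,\lambda_2,\lambda_3\}$ after rotating/translating into its (A1)--(A4) normalization; and (c) the symmetric half-plane $\overline{\textup{LHS}[\lambda_n,w_{12n}]}$ obtained likewise from the triangle $\{\lambda_1,\lambda_2,\lambda_n\}$. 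Intersecting these four half-planes reproduces the polygon exactly. The main obstacle will therefore be the sign analysis at the trace/determinant step: in~(A) it exploited the one-cap structure $\textup{Re}(\lambda_j)>0$ for all $j\ne 2,3$, while in~(B) it must be carried out in the genuinely two-cap setting where both $\lambda_1$ and $\lambda_2$ sit in the closed left half-plane.
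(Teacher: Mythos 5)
Your high-level outline matches the paper's, but the core step---verifying $Z(t)\geq 0$---is where your plan would break down, and the paper in fact abandons the Section~4 machinery entirely at exactly this point. In Section~4 the $\epsilon$-perturbation/QR/trace--determinant argument works because $X=\frac{\textup{Re}(\lambda_1)}{t_1}xx^*+\frac{\textup{Re}(\lambda_2)}{t_2}yy^*$ has one positive and one negative coefficient and $\Delta$ is singular (since $\lambda_3=0$ puts a zero on its diagonal); the whole proof rests on showing the coefficients of $1/\epsilon$ in the trace and determinant of $R_\epsilon XR_\epsilon^*$ are \emph{positive}, which forces $R_\epsilon XR_\epsilon^*+I_2>0$ for small $\epsilon$. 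Under (B2) both $\textup{Re}(\lambda_1)$ and $\textup{Re}(\lambda_2)$ are nonpositive, so $X\leq 0$ and hence $R_\epsilon XR_\epsilon^*\leq 0$; the analogue of $a(t)$ is now $\leq 0$, so if a $1/\epsilon$ blow-up were present it would drive $\textup{trace}(R_\epsilon XR_\epsilon^*)\to-\infty$ rather than $+\infty$. You notice that the signs flip and declare that "establishing the replacement trace/determinant inequalities is the main technical step," but no such replacement is offered---and none is readily available, because in the two-cap setting the needed trace/determinant bounds (e.g.\ $\textup{trace}(XY^*\Delta^{-1}Y)\geq -2$) are genuine quantitative inequalities, not sign conditions on $1/\epsilon$ coefficients.

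The paper proves $Z(t)\geq 0$ by a different, direct quadratic-form decomposition. Writing $t=\sum_{j=3}^n p_j\,r^{(12j)}$ as a convex combination of the extreme points of ${\cal C}_\Lambda(\mu_1)$ supported on $\{1,2,j\}$, one substitutes Proposition~\ref{isogformula1}'s formula for $w_{12j}$ to eliminate $\lambda_j$ from the diagonal contribution and obtains
\[
x^*Z(t)x = d_1\,\textup{Re}(\lambda_1)+d_2\,\textup{Re}(\lambda_2)+\sum_{j=3}^n\frac{|x_j|^2 c_j}{p_j}\,\textup{Re}(w_{12j}),
\]
where $d_1,d_2\leq 0$ follow from Lemma~\ref{keylemma2} (Cauchy--Schwarz) together with $t_k=\sum_j p_j r_k^{(12j)}$, and $\textup{Re}(w_{12j})\geq 0$ is exactly Proposition~\ref{isogrhs}. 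This is where the common sign of $\textup{Re}(\lambda_1)$ and $\textup{Re}(\lambda_2)$ under (B2)---which you treated as an obstacle---is actually the friend: $d_1,d_2\leq 0$ pairs with $\textup{Re}(\lambda_1),\textup{Re}(\lambda_2)\leq 0$ to make those two terms nonnegative. Proposition~\ref{isogrhs} is therefore used not as a substitute for Lemma~\ref{xt} inside a trace computation, as you suggest, but as the input making the last displayed sum nonnegative. Your final step of intersecting the new half-plane with the ones supplied by Proposition~\ref{realpartmup1v2} and with $W(A)$ is consistent with what the paper does; the gap is entirely in how the half-plane $\overline{\textup{RHS}[w_{123},w_{12n}]}$ is actually established.
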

\begin{proof}
By Proposition \ref{realpartmup1}, it suffices to prove that ${\cal B}_A(\mu_1,t)\subseteq \overline{\textup{RHS}[w_{123},w_{12n}]}$. 

Elements of ${\cal B}_A(\mu_1,t)$ are of the form $\frac{v^*\Lambda v}{v^*v}$ for all nonzero $v\in \{\sqrt{t},\Lambda\sqrt{t}\}^\perp$. By (B1)-(B4), it suffices to prove $\textup{Re}(v^*\Lambda v)\geq 0$.  A basis for $\{\sqrt{t},\Lambda\sqrt{t}\}^\perp$ is given by
\[f_j=\dfrac{\overline{\lambda_j-\lambda_2}}{\sqrt{t_1}}e_1+\dfrac{\overline{\lambda_1-\lambda_j}}{\sqrt{t_2}}e_2+\dfrac{\overline{\lambda_2-\lambda_1}}{\sqrt{t_j}}e_j\]
for $j=3,\ldots,n$ where $e_k$ is the $k^{th}$ standard basis vector in $\C^n$. The vector $t$ can be written as a convex combination $t=\displaystyle\sum_{j=3}^n p_jr^{(12j)}$
where $p_j>0$ and $\displaystyle\sum_{j=3}^n p_j=1$. 

Observe that $\textup{Re}(v^*\Lambda v)\geq 0$ for all $v\in \{\sqrt{t},\Lambda\sqrt{t}\}^\perp$ if and only if $Z(t)\geq 0$, as defined in \eqref{psd}. If $x=[x_3\ \ldots\ x_n]^T\in\C^{n-2}$, then
\[x^*Z(t) x=\dfrac{\left|\displaystyle\sum_{j=3}^n\overline{x_j}(\lambda_j-\lambda_2)\right|^2}{t_1}\textup{Re}(\lambda_1)+\dfrac{\left|\displaystyle\sum_{j=3}^n\overline{x_j}(\lambda_1-\lambda_j)\right|^2}{t_2}\textup{Re}(\lambda_2)+\displaystyle\sum_{j=3}^n\dfrac{|\overline{x_j}(\lambda_2-\lambda_1)|^2}{t_j}\textup{Re}(\lambda_j).\]
For some $c_j>0$, Proposition \ref{isogformula1} guarantees that the isogonal conjugate of $\mu_1$ with respect to $\textup{conv}\{\lambda_1,\lambda_2,\lambda_j\}$ satisfies
\[c_jw_{12j}=\frac{|\lambda_j-\lambda_2|^2}{r_1^{(12j)}}\lambda_1+\frac{|\lambda_1-\lambda_j|^2}{r_2^{(12j)}}\lambda_2+\frac{|\lambda_2-\lambda_1|^2}{r_j^{(12j)}}\lambda_j\]
for all $j=3,\ldots,n$. Note that
\[\begin{array}{rcl}
\dfrac{|\overline{x_j}(\lambda_2-\lambda_1)|^2}{t_j}\lambda_j&=&\dfrac{|\overline{x_j}|^2}{p_j}\cdot \dfrac{|\lambda_2-\lambda_1|^2}{r_j^{(12j)}}\lambda_j\\
&=&\dfrac{|\overline{x_j}|^2c_j}{p_j}w_{12j}- \dfrac{|\overline{x_j}(\lambda_j-\lambda_2)|^2}{p_jr_1^{(12j)}}\lambda_1-\dfrac{|\overline{x_j}(\lambda_1-\lambda_j)|^2}{p_jr_2^{(12j)}}\lambda_2,
\end{array}
\]
for all $j=3,\ldots,n$, and hence, $\displaystyle\sum_{j=3}^n\dfrac{|\overline{x_j}(\lambda_2-\lambda_1)|^2}{t_j}\lambda_j$ is equal to
\[\displaystyle\sum_{j=3}^n\dfrac{|\overline{x_j}|^2c_j}{p_j}w_{12j}-\displaystyle\sum_{j=3}^n 
\dfrac{|\overline{x_j}(\lambda_j-\lambda_2)|^2}{p_jr_1^{(12j)}}\lambda_1-\displaystyle\sum_{j=3}^n\dfrac{|\overline{x_j}(\lambda_1-\lambda_j)|^2}{p_jr_2^{(12j)}}\lambda_2.\]
Thus, 
\[x^*Z(t)x=d_1\textup{Re}(\lambda_1)+d_2\textup{Re}(\lambda_2)+\displaystyle\sum_{j=3}^n\dfrac{|\overline{x_j}|^2c_j}{p_j}\textup{Re}(w_{12j})\]
where 
\[d_1=\dfrac{\left|\displaystyle\sum_{j=3}^n\overline{x_j}(\lambda_j-\lambda_2)\right|^2}{t_1}-\displaystyle\sum_{j=3}^n 
\dfrac{|\overline{x_j}(\lambda_j-\lambda_2)|^2}{p_jr_1^{(12j)}}\]
and
\[d_2=\dfrac{\left|\displaystyle\sum_{j=3}^n\overline{x_j}(\lambda_1-\lambda_j)\right|^2}{t_2}-\displaystyle\sum_{j=3}^n\dfrac{|\overline{x_j}(\lambda_1-\lambda_j)|^2}{p_jr_2^{(12j)}}.\] Note that $t_k=\displaystyle\sum_{j=3}^np_jr_k^{(12j)}$ for $k=1,2$, and so Lemma \ref{keylemma2} guarantees that \[d_1,d_2\leq 0.\] Hence,
\[d_1\textup{Re}(\lambda_1)+d_2\textup{Re}(\lambda_2) \geq 0\]
due to assumption (B2). It follows that
\[x^*Z(t)x=d_1\textup{Re}(\lambda_1)+d_2\textup{Re}(\lambda_2)+\displaystyle\sum_{j=3}^n\dfrac{|\overline{x_j}|^2c_j}{p_j}\textup{Re}(w_{12j})\geq 0\]
since $\textup{Re}(w_{12j})\geq 0$ for all $j=3,\ldots,n$ due to Proposition \ref{isogrhs}.
\end{proof}

\begin{proposition}\label{realpartmup2v2}
Let $A\in\C^{n\times n}$ be normal satisfying (B1)-(B2) and $\mu_1\in W(A)$ satisfying (B3)-(B4). If $t\in {\cal C}_\Lambda(\mu_1)$, then 
\[{\cal B}_A(\mu_1,t)\subseteq \textup{conv}\{w_{123},w_{12n},\lambda_3,\ldots,\lambda_n\}.\]
\end{proposition}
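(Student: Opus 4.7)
The plan is to mirror the structure of Proposition~\ref{realpartmup1v2}'s proof: induct on $n$ and split into two cases based on whether $t$ has any zero coordinates. If $t$ has all positive entries, Proposition~\ref{realpartmup2} gives the desired containment directly. The real work is in reducing the zero-entry case to the positive-entry case on a smaller normal matrix via Proposition~\ref{withzero}, and then verifying that the generating points of the resulting inductive bound still lie in the target region $\textup{conv}\{w_{123},w_{12n},\lambda_3,\ldots,\lambda_n\}$.

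Suppose $t$ has at least one zero entry. Set $J:=\{j:t_j=0\}$, $S:=\{\lambda_j:j\in J\}$, $\Lambda':=\textup{diag}(\lambda_j)_{j\notin J}$, and $s:=[t_j]_{j\notin J}$. Proposition~\ref{withzero} gives
\[
{\cal B}_\Lambda(\mu_1,t)=\textup{conv}[{\cal B}_{\Lambda'}(\mu_1,s)\cup W(S)].
\]
Assumption (B3) places $\mu_1$ in the intersection of two nondegenerate triangles sharing the edge $[\lambda_1,\lambda_2]$ and outside $\Lambda_2(A)$, so the Case~4 discussion in Section~3 forces $t_1,t_2>0$ in every convex representation of $\mu_1$ (otherwise $\mu_1$ would belong to $\textup{conv}[\sigma(A)\ominus\{\lambda_1\}]$ or $\textup{conv}[\sigma(A)\ominus\{\lambda_2\}]$, but neither of $\lambda_1,\lambda_2$ lies in $\Lambda_2(A)$). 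Hence $1,2\notin J$ and $S\subseteq\{\lambda_3,\ldots,\lambda_n\}$, which immediately gives $W(S)\subseteq\textup{conv}\{\lambda_3,\ldots,\lambda_n\}\subseteq\textup{conv}\{w_{123},w_{12n},\lambda_3,\ldots,\lambda_n\}$. For ${\cal B}_{\Lambda'}(\mu_1,s)$, applying the induction hypothesis (or Proposition~\ref{realpartmup1v2} if $\mu_1$ drops from Case~4 to Case~3 for $\Lambda'$) yields a containment in $\textup{conv}[\{w_1',w_2'\}\cup T]$ or $\textup{conv}[\{w'\}\cup T]$, where each $w'$ is an isogonal conjugate of $\mu_1$ with respect to a triangle involving $\lambda_1,\lambda_2$, and $T\subseteq\sigma(A)\setminus\{\lambda_1,\lambda_2\}$. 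The points of $T$ sit trivially in the target region, and Proposition~\ref{isogrhs} together with Corollary~\ref{dec29} places each $w'$ in $\textup{conv}\{w_{123},w_{12n},\lambda_3,\ldots,\lambda_n\}$. Convexity then closes the containment.

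The main obstacle is the bookkeeping that arises when passing from $A$ to $\Lambda'$: removing eigenvalues can rearrange $\partial W(\Lambda')$, shift $\mu_1$ from Case~4 to Case~3 or even Case~2, and alter which triples $\{\lambda_1,\lambda_2,\lambda_c\}$ are nondegenerate. Carefully enumerating these sub-cases and matching each to the correct tool among Propositions~\ref{realpartmup1v2},~\ref{realpartmup2},~\ref{isogrhs}, and Corollary~\ref{dec29} is the delicate step. Once every generating point produced by the inductive bound is verified to lie inside $\textup{conv}\{w_{123},w_{12n},\lambda_3,\ldots,\lambda_n\}$, the result follows.
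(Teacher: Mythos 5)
Your proposal is correct and follows essentially the same route as the paper: induction on $n$, splitting on whether $t$ has a zero entry, reducing via Proposition~\ref{withzero}, invoking Proposition~\ref{realpartmup2} for the all-positive case, and using Proposition~\ref{isogrhs} and Corollary~\ref{dec29} to place the resulting isogonal conjugates inside $\textup{conv}\{w_{123},w_{12n},\lambda_3,\ldots,\lambda_n\}$. In fact you are slightly more careful than the paper's write-up, which tacitly assumes the inductive bound always has two isogonal conjugates; your observation that $\mu_1$ may drop from Case~4 to Case~3 for $\Lambda'$ (requiring Proposition~\ref{realpartmup1v2} instead of the induction hypothesis for Proposition~\ref{realpartmup2v2}) is a legitimate subtlety that the paper does not spell out.
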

\begin{proof}
We prove this by induction on $n$. Let $t\in {\cal C}_\Lambda(\mu_1)$. Suppose $t$ has zero entries. Let $J:=\{j:t_j=0\}$, $S:=\{\lambda_j: j\in J\}$, and $\Lambda':=\textup{diag}(\lambda_j)_{j\notin J}$. By Proposition \ref{withzero}, 
\[{\cal B}_A(\mu_1,t)=\textup{conv}[{\cal B}_{\Lambda'}(\mu_1,s)\cup W(S)]\]
where $s=[t_j]_{j\notin J}$. By the induction hypothesis, 
\[{\cal B}_{\Lambda'}(\mu_1,s)\subseteq{\cal B}_{\Lambda'}(\mu_1)\subseteq\textup{conv}[\{w_{abc},w_{def}\}\cup T]\]
where $w_{abc},w_{def}$ are the isogonal conjugates of $\mu_1$ with respect to $\textup{conv}\{\lambda_a,\lambda_b,\lambda_c\}$ and $\textup{conv}\{\lambda_d,\lambda_e,\lambda_f\}$ and $T\subseteq \sigma(A)$. Due to assumption (B3), $\lambda_1,\lambda_2\notin T$, $\lambda_1,\lambda_2\in \{\lambda_a,\lambda_b,\lambda_c\}$, and $\lambda_1,\lambda_2\in \{\lambda_d,\lambda_e,\lambda_f\}$. Hence, \[T\subseteq \textup{conv}\{w_{123},w_{12n},\lambda_3,\ldots,\lambda_n\}.\] Moreover, observe that $w_{abc}=w_{12c},w_{def}=w_{12f}\in\textup{conv}\{w_{123},w_{12n},\lambda_3,\ldots,\lambda_n\} $ due to Proposition \ref{isogrhs}. The assertion follows. 

If $t$ has all positive entries, then ${\cal B}_A(\mu_1,t)\subseteq \textup{conv}\{w_{123},w_{12n},\lambda_3,\ldots,\lambda_n\} $ due to Proposition \ref{realpartmup2}.
\end{proof}
\section{Proof of the main result}

\begin{proof}[Proof of Theorem \ref{main}]
As discussed in Section 3, we consider the following cases:

\noindent\textbf{Case 1}: $\mu_1\in \Lambda_2(A)$.\\

\noindent\textbf{Case 2}: $\mu_1\in \partial W(A)\cap [W(A)\setminus\Lambda_2(A)]$.\\

\noindent\textbf{Case 3}: $\mu_1\in W(A)\setminus [ \Lambda_2(A)\cup \partial W(A)]$ and there exists unique $\lambda_a\in \sigma(A)\setminus \Lambda_2(A)$ for which
\[\mu_1\notin \textup{conv}[\sigma(A)\ominus\{\lambda_a\}].\]

\noindent\textbf{Case 4}: $\mu_1\in W(A)\setminus [ \Lambda_2(A)\cup \partial W(A)]$ and there exist $\lambda_a,\lambda_{a+1}\in \sigma(A)\setminus \Lambda_2(A)$ for which
\[\mu_1\in\textup{conv}\{\lambda_{a-1},\lambda_a,\lambda_{a+1}\}\cap\textup{conv}\{\lambda_a,\lambda_{a+1},\lambda_{a+2}\}
\] and the intersection is of two nondegenerate triangular regions. 

\textbf{Case 1} and \textbf{Case 2} are proved in Propositions \ref{inrank2} and \ref{twocases}, respectively. We can assume that the eigenvalues are distinct due to Corollary \ref{distinct}.

Let $\mu_1\in W(A)\setminus(\Lambda_2(A)\cup \partial W(A))$. If $\mu_1$ is in \textbf{Case 3}, assume $\lambda_a:=\lambda_2$. Then 
\[{\cal R}_A(\mu_1)=\textup{conv}[\{w_{123}\}\cup(\sigma(A)\ominus \{\lambda_2\})]\]
due to Proposition \ref{dec28l3} and Corollary \ref{dec29}. By Proposition \ref{realpartmup1v2}, the assertion follows. 

If $\mu_1$ is in \textbf{Case 4}, assume $\lambda_{a-1}:=\lambda_n$ and $\lambda_{a}:=\lambda_1$. Then 
\[{\cal R}_A(\mu_1)=\textup{conv}\{w_{123},w_{12n},\lambda_3,\ldots,\lambda_n\}\]
due to Proposition \ref{isogrhs}. By Proposition \ref{realpartmup2v2}, the assertion follows.
\end{proof}

\section{Concluding remarks}
In this study, we considered $2$-Ritz sets of a normal matrix having no eigenvalues in the interior of its numerical range. We identified the smallest convex region containing all $\mu_2$'s for which $\{\mu_1,\mu_2\}$ is a $2$-Ritz set. An open problem is to develop similar results for $k$-Ritz sets where $3\leq k<n-1$. 

Another open problem is to consider a normal matrix with some eigenvalues in the interior of its numerical range. In this case however, it is less clear how to characterize $\textup{conv}[{\cal B}_A(\mu_1)]$. Figure 2 shows a numerical example where $\textup{conv}[{\cal B}_A(\mu_1)]\neq \textup{conv}\{w_{123},w_{124},\lambda_3,\lambda_4\}.$
\begin{figure}[h]
\centering
\includegraphics[height=7.5cm, width=10.5cm]{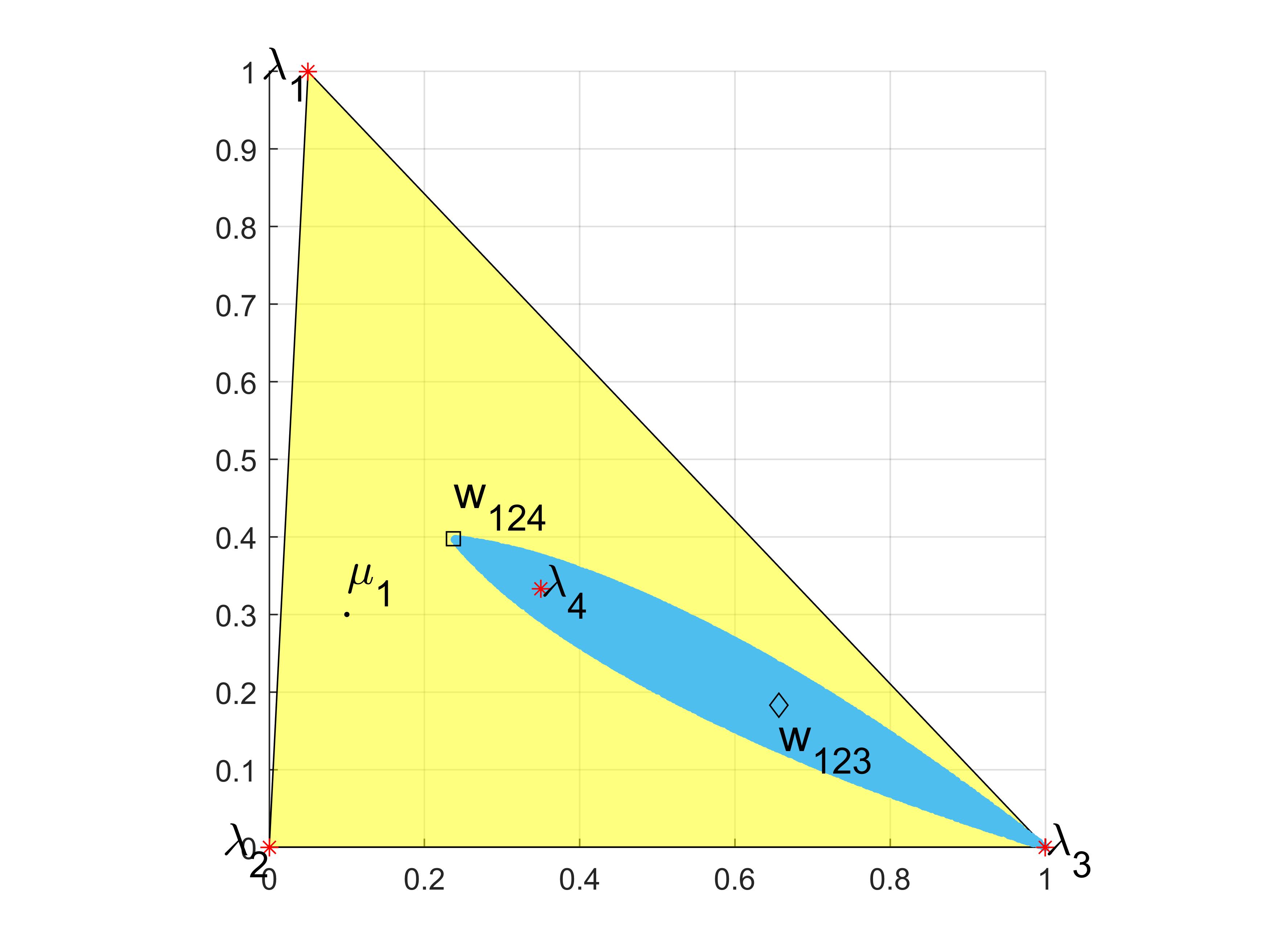}
\caption{An example where Theorem \ref{main} fails to hold when $\lambda_4\in\textup{conv}\{\lambda_1,\lambda_2,\lambda_3\}$. The blue region is the set ${\cal B}_A(\mu_1)$.}
\end{figure}

\section*{Funding}
The work of the second author was partially supported by Simons Foundation grant 355645.
\section*{Acknowledgment} We would like to thank the referee for their careful attention in reviewing this paper and for simplifying the proof of Lemma \ref{keylemma2}.

\bibliographystyle{tfnlm}
\bibliography{biblio}

\begin{thebibliography}{10}
\providecommand{\url}[1]{\normalfont{#1}}
\providecommand{\urlprefix}{Available from: }

\bibitem{fU08}
Uhlig~F. An inverse field of values problem. Inverse Problems.
  2008;\hspace{0pt}24:055019.

\bibitem{rC09}
Carden~R. A simple algorithm for the inverse field of values problem. Inverse
  Problems. 2009;\hspace{0pt}25:115019.

\bibitem{rC11}
Carden~R. Ritz values and {Arnoldi} convergence for non-{Hermitian} matrices
  [dissertation]. Houston, TX: Rice University; 2011.

\bibitem{rC13}
Carden~R, Hansen~DJ. Ritz values of normal matrices and {Ceva's} theorem.
  Linear Algebra and its Applications. 2013;\hspace{0pt}438:4114--4129.

\bibitem{mE03}
Embree~M. The {Tortoise} and the {Hare} {Restart} {GMRES}. SIAM Review.
  2003;\hspace{0pt}45:259--266.

\bibitem{mE09}
Embree~M. The {Arnoldi} eigenvalue iteration with exact shifts can fail. SIAM
  Journal on Matrix Analysis and Applications. 2009;\hspace{0pt}31(1):1--10.

\bibitem{jD12}
{Duintjer Tebbens}~J, Meurant~G. Any {Ritz} value behavior is possible for
  {Arnoldi} and for {GMRES}. SIAM Journal on Matrix Analysis and Applications.
  2012;\hspace{0pt}33(3):958--978.

\bibitem{hG02}
Gau~HL, Wu~PY. Numerical range of a normal compression. Linear and Multilinear
  Algebra. 2004;\hspace{0pt}52:195--201.

\bibitem{hG04}
Gau~HL, Wu~PY. Numerical range of a normal compression {II}. Linear Algebra and
  its Applications. 2004;\hspace{0pt}390:121--136.

\bibitem{sM04}
Malamud~S. Inverse spectral problem for normal matrices and the {Gauss-Lucas
  Theorem}. Transactions of the American Mathematical Society.
  2004;\hspace{0pt}357:4043--4064.

\bibitem{kF57}
Fan~K, Pall~G. Imbedding conditions for {Hermitian} and normal matrices.
  Canadian Journal of Mathematics. 1957;\hspace{0pt}9:298--304.

\bibitem{dC84}
Carlson~D, {Marques de S\'a}~E. Generalized {Minimax} and {Interlacing
  Theorems}. Linear and Multilinear Algebra. 1984;\hspace{0pt}15:77--103.

\bibitem{jQ09}
Queir\'o~JF, Duarte~AL. Imbedding conditions for normal matrices. Linear
  Algebra and its Applications. 2009;\hspace{0pt}430:1806--1811.

\bibitem{jH12}
Holbrook~JA, Mudalige~N, Pereira~R. {Normal matrix compressions}. Operators and
  {M}atrices. 2013;\hspace{0pt}7:849--864.

\bibitem{zB13}
Bujanovi\'c~Z. On the permissible arrangements of {Ritz} values for normal
  matrices in the complex plane. Linear Algebra and its Applications.
  2013;\hspace{0pt}438:4606--4624.

\bibitem{rT66}
Thompson~RC. Principal submatrices of normal and {H}ermitian matrices. Illinois
  Journal of Mathematics. 1966;\hspace{0pt}10:296--308.

\bibitem{rH13}
Horn~RA, Johnson~CR. Matrix analysis. Cambridge, UK: Cambridge University
  Press; 2013.

\bibitem{mc06}
Choi~MD, Kribs~DW, {\.Zyczkowski}~K. Higher-rank numerical ranges and
  compression problems. Linear Algebra and its Applications.
  2006;\hspace{0pt}418:828--839.

\bibitem{mC062}
Choi~MD, Kribs~DW, {\.Z}yczkowski~K. {Quantum error correcting codes from the
  compression formalism}. Reports on Mathematical Physics. 2006
  Aug;\hspace{0pt}58:77--91.

\bibitem{cL07}
Li~CK, Poon~YT, Sze~NS. {Condition for the higher rank numerical range to be
  non-empty}. Linear and Multilinear Algbera. 2009;\hspace{0pt}57(4):365--368.

\bibitem{cL08}
Li~CK, Sze~NS. {Canonical forms, Higher Rank Numerical Ranges, Totally
  isotropic subspaces, and Matrix Equations}. Proceedings of the American
  Mathematical Society. 2008;\hspace{0pt}136:3013--3023.

\bibitem{hW08}
Woerdeman~HJ. The higher rank numerical range is convex. Linear and Multilinear
  Algebra. 2008;\hspace{0pt}56:65--67.

\bibitem{rH91}
Horn~RA, Johnson~CR. Topics in {Matrix Analysis}. Cambridge, UK: Cambridge
  University Press; 1991.

\end{thebibliography}
\end{document}